\newtheorem{theorem}{Theorem}
\newtheorem{lemma}{Lemma}
\newdefinition{remark}{Remark}
\newtheorem{definition}{Definition}
\newtheorem{example}{Example}
\newtheorem{corollary}{Corollary}
\journal{Journal of \LaTeX\ Templates}
\begin{document}

\begin{frontmatter}

\title{Subsampling for tensor least squares: Optimization and statistical perspectives
\tnoteref{mytitlenote}}
\tnotetext[mytitlenote]{The work is supported by the National Natural Science Foundation of China (No. 11671060) and the Natural Science Foundation of Chongqing, China (No. cstc2019jcyj-msxmX0267)\\ *Corresponding author\\ Email address: lihy.hy@gmail.com or hyli@cqu.edu.cn (Hanyu Li)}

\author{Ling Tang}
\author{Hanyu Li*}

\address{College of Mathematics and Statistics, Chongqing University, Chongqing 401331, P.R. China}
\begin{abstract}
In this paper, we investigate 
the random subsampling method for 
tensor least squares problem with respect to the popular t-product. 
From the optimization perspective, we present the error bounds in the sense of probability for the residual and solution obtained by the proposed method. 
From the statistical perspective, we derive 
the expressions of the conditional and unconditional expectations and variances for the solution, where the unconditional ones combine the model noises. Moreover, based on the unconditional variance, an optimal subsampling probability distribution is also found. 
Finally, the feasibility and effectiveness of the proposed method 
and the correctness of the theoretical results are verified by numerical experiments.
\end{abstract}

\begin{keyword}
 random subsampling, tensor least squares, t-product,  optimization perspective, 
 statistical perspective\\
 {\em MSC}: 15A69\sep 93E24 \sep 62J05 \sep 68W20
\end{keyword}

\end{frontmatter}


\section{Introduction}
The problem we consider in this paper is the following tensor least squares (TLS) problem:
\begin{align}\label{TLS}	\min_{\overrightarrow{\mathcal{B}}\in\mathbb{R}^{p\times 1 \times l}}f(\overrightarrow{\mathcal{B}})=\min_{\overrightarrow{\mathcal{B}}\in\mathbb{R}^{p\times 1 \times l}}\|\overrightarrow{\mathcal{Y}}-\mathcal{X}*\overrightarrow{\mathcal{B}}\|_{F}^{2},
\end{align}
where $\mathcal{X}\in\mathbb{R}^{n\times p\times l}$, $\overrightarrow{\mathcal{Y}}\in\mathbb{R}^{n\times 1\times l}$ and the operator $*$ is the t-product of tensors introduced by Kilmer and Martin in \cite{kilmer2011factorization}. Numerous applications, such as tensor low-rank approximation and decomposition \cite{tarzanagh2018fast}, digital image and signal processing \cite{kilmer2013third, soltani2016tensor}, and statistical models \cite{jin2017generalized}, etc., involve the problem (\ref{TLS}). It has attracted some attention, as seen in the recent works \cite{jin2017generalized,miao2020generalized, reichel2022tensor}. In the present paper, we mainly focus on the case of $n\gg p$, $n\gg l$, and $\text{rank}_{\text{t}}(\mathcal{X})=p$, where $\text{rank}_{\text{t}}(\mathcal{X})$ denotes the tubal rank of $\mathcal{X}$. In this case, the 
solution 
can be expressed as follows:
\begin{align}\label{solution}
\widetilde{\overrightarrow{\mathcal{B}}}_{\rm ols}=\mathcal{X}^{\dag}*\overrightarrow{\mathcal{Y}}=(\mathcal{X}^{T}*\mathcal{X})^{-1}*\mathcal{X}^{T}*\overrightarrow{\mathcal{Y}}.
\end{align}

It is expensive to compute the solution \eqref{solution}, especially for large-scale problem. As done for 
matrix least squares (MLS) problem, i.e., the case for $l=1$ in the problem (\ref{TLS}), we will investigate the random subsampling method for  TLS problem. At present, there are many works on the random subsampling method for MLS problem. For example, in 2006, Drineas et al. \cite{drineas2006sampling} presented and analyzed the leverage-based subsampling algorithms. 
Later, due to the high 
cost of leverage scores, many approaches for
amelioration were developed, such as computing them approximately \cite{drineas2012fast} or uniformizing them using random projections \cite{drineas2011faster, avron2010blendenpik, meng2014lsrn}. To our best knowledge, there is no related work for TLS problem reported in the literature.

The TLS problem \eqref{TLS} has a close relationship with the following tensor linear regression model:
\begin{align}\label{linearmodel}
\overrightarrow{\mathcal{Y}}=\mathcal{X}*\overrightarrow{\mathcal{B}}_{0}+\overrightarrow{\mathcal{E}},
\end{align}
where 
$\overrightarrow{\mathcal{B}}_{0}\in\mathbb{R}^{p\times 1\times l}$ is the true but unknown parameter, and
$\overrightarrow{\mathcal{E}}$ is the noise tensor with $\mathbf{E}[\overrightarrow{\mathcal{E}}]=\overrightarrow{\mathbf{0}}$ and $\mathbf{Cov}[\overrightarrow{\mathcal{E}}]=\sigma^{2}\mathcal{I}_{n}$. Actually, the solution \eqref{solution} is just 
the least squares estimator 
of 
$\overrightarrow{\mathcal{B}}_{0}$. Hence, besides discussing the random subsampling method for the problem \eqref{TLS} from the optimization perspective, i.e., obtaining the error bounds for residual and solution in probabilistic meaning, we will also consider the statistical properties of the aforementioned method under the setting of \eqref{linearmodel}. The expressions for the conditional and unconditional expectations and variances of the estimator will be given. Further, an optimal subsampling probability distribution is also derived by minimizing the trace of the unconditional variance. These results generalize the corresponding ones of random subsampling methods for MLS problem \cite{ma2014statistical,raskutti2016statistical,chi2022projector,dobriban2019asymptotics,JMLR}.

The remainder of this paper is organized as follows. Section \ref{Na_Pre} introduces some necessary notation and preliminaries. In Section \ref{sec_RTLS}, we present the random subsampling method for the TLS problem \eqref{TLS}. The analysis of the quality of the proposed method from optimization and  statistical perspectives are described in Sections \ref{sec_optimization} and  \ref{sec_statistical}, respectively. Section \ref{sub_ex} is  devoted to numerical experiments to test our method and verify our theoretical results. Finally, we give the conclusion of the whole paper. The detailed proofs of main lemmas and theorems are provided in the appendix, along with an additional algorithm.

\section{Notation and preliminaries}\label{Na_Pre}

\subsection{Notation}
Throughout this paper,  we will use lowercase letters (e.g., $x$), boldface lowercase letters (e.g., $\mathbf{x}$), capital letters (e.g., $X$) and calligraphic letters (e.g., $\mathcal{X}$)  to denote scalars, vectors, matrices and tensors, respectively. The Matlab style notation is used to represent fibers and slices of third-order tensor as outlined in Table \ref{No_thirdtensor}. For a positive integer $m$, let $[m]:=\{1,2,\cdots,m\}$.

\begin{table}[]
	\centering
	\fontsize{9}{9}\selectfont
	\caption{Summary of notation for third-order tensors.}\label{No_thirdtensor}
	\setlength{\tabcolsep}{0.6mm}{
	\begin{tabular}{|c c|}
		\hline
  $\mathcal{X}\in\mathbb{R}^{n\times p\times l}$ &  \\ \hline
$\mathcal{X}_{(i,j,k)}$  & The $(i,j,k)$-th element of $\mathcal{X}$ \\
$\mathcal{X}_{(i,:,k)}$ & The $(i,k)$-th row fiber of $\mathcal{X}$\\
	$\mathcal{X}_{(:,j,k)}$  & The $(j,k)$-th column fiber of $\mathcal{X}$\\
	$\mathcal{X}_{(i,j,:)}$& The $(i,j)$-th tube fiber of $\mathcal{X}$\\
	$\mathcal{X}_{(i,:,:)}$ & The $i$-th horizontal slice of $\mathcal{X}$\\
	$\mathcal{X}_{(:,j,:)}$ & The $j$-th lateral slice of $\mathcal{X}$\\
$\mathcal{X}_{(:,:,k)}$, $\mathcal{X}_{(k)}$  & The $k$-th frontal slice of $\mathcal{X}$ \\ \hline
	\end{tabular}
}
\end{table}

\subsection{t-product and the related basics}
In this paper, third-order tensors are also referred to as tubal matrices. The details are described in Definition \ref{def_tubal}.
\begin{definition}[see \cite{kilmer2013third}]\label{def_tubal}\rm
	An element $\mathbf{x}\in \mathbb{R}^{1\times 1\times l}$ is called a tubal scalar of length $l$ and the set consisting of all tubal scalars of length $l$ is denoted by $\mathbb{K}_{l}$; an element $\overrightarrow{\mathcal{X}} \in \mathbb{R}^{n\times 1\times l}$ 
	is  called a vector of tubal scalars of length $l$ with size $n$ and the corresponding set is denoted by $\mathbb{K}^{n}_{l}$;
	an element $\mathcal{X} \in \mathbb{R}^{n\times p\times l}$ is  called a matrix of tubal scalars of length $l$ with size $n\times p$ and the corresponding set is denoted by $\mathbb{K}^{n\times p}_{l}$.
\end{definition}

Now, we give the definition of the t-product.
\begin{definition}[t-product \cite{kilmer2011factorization}] \rm
	For $\mathcal{X}\in \mathbb{K}^{n\times p}_{l}$ and $\mathcal{Y}\in \mathbb{K}^{p\times r}_{l}$, their t-product $\mathcal{X}*\mathcal{Y}\in \mathbb{K}^{n\times r}_{l}$ is defined as
	$$\mathcal{X}*\mathcal{Y}=\text{fold}(\text{bcirc}(\mathcal{X})\text{unfold}(\mathcal{Y})),$$
	where
	$$\text{bcirc}(\mathcal{X}):=
	\begin{bmatrix}
		\mathcal{X}_{(1)} & \mathcal{X}_{(l)} & \cdots & \mathcal{X}_{(2)}\\
		\mathcal{X}_{(2)} & \mathcal{X}_{(1)} & \cdots & \mathcal{X}_{(3)}  \\
		\vdots & \vdots & \ddots & \vdots \\
		\mathcal{X}_{(l)} & \mathcal{X}_{(l-1)} & \cdots & \mathcal{X}_{(1)} \\
	\end{bmatrix}
	,~~\text{unfold}(\mathcal{Y}):=\begin{bmatrix}
		\mathcal{Y}_{(1)} \\
		\mathcal{Y}_{(2)} \\
		\vdots \\
		\mathcal{Y}_{(l)}
	\end{bmatrix}
	, \textrm{ and } \text{fold}(\text{unfold}(\mathcal{Y})):=\mathcal{Y}.$$
\end{definition}

The t-product can be computed by the discrete Fourier transform (DFT), as shown in Algorithm \ref{t-product}.
\begin{algorithm}[htbp]
	\caption{t-product $\mathcal{Z}=\mathcal{X}*\mathcal{ Y}$ in the Fourier domain\cite{kilmer2011factorization}} \label{t-product}
	\hspace*{0.02in} {\bf Input:} 
	$\mathcal{X}\in \mathbb{K}^{n\times p }_{l}$, $\mathcal{Y}\in \mathbb{K}^{p\times r }_{l}$
	\begin{algorithmic}[1]
		\State $\widehat{\mathcal{ X}}=\texttt{fft}(\mathcal{ X},[~],3)$ and $\widehat{\mathcal{Y}}=\texttt{fft}(\mathcal{ Y},[~],3)$
		\For{$k=1,2,\cdots,l$}
		\State $\widehat{\mathcal{ Z}}_{(k)}=\widehat{\mathcal{ X}}_{(k)}\widehat{\mathcal{ Y}}_{(k)}$
		\EndFor
		\State $\mathcal{ Z}=\texttt{ifft}(\widehat{\mathcal{ Z}},[~],3)$.
	\end{algorithmic}
	\hspace*{0.02in} {\bf Output:} $\mathcal{Z}\in \mathbb{K}^{n\times r}_{l}$
\end{algorithm}
Moreover, due to the special structure of the DFT, the $l$ matrix-matrix multiplications in Algorithm \ref{t-product} can be reduced to $\lceil\frac{l+1}{2}\rceil$, where $\lceil n \rceil$ means the nearest integer number larger than or equal to $n$. That is, we can replace the "for" loop in Algorithm \ref{t-product} with the following computations \cite{lu2019tensor}:
\begin{equation*}
	\left\{
	\begin{array}{lcl}
		\widehat{\mathcal{ Z}}_{(k)}=\widehat{\mathcal{ X}}_{(k)}\widehat{\mathcal{ Y}}_{(k)},&&\text{for}~k=1,2,\cdots,\lceil\frac{l+1}{2}\rceil,\\
		\widehat{\mathcal{Z}}_{(k)}=\text{conj}(\widehat{\mathcal{Z}}_{(l-k+2)}), & &\text{for}~ k=\lceil\frac{l+1}{2}\rceil+1,\cdots,l.
	\end{array} \right.
\end{equation*}
We will use them in all our simulations.

Next, we will review some essential definitions, lemmas and properties related to t-product, which will be utilized for the following sections. For details, refer to \cite{kilmer2011factorization, jin2017generalized, qi2021t, zheng2021t, kilmer2013third}.
\begin{definition}[identity tubal matrix \cite{kilmer2011factorization}]\rm
	The identity tubal matrix $\mathcal{I}_{n}\in \mathbb{K}^{n\times n }_{l}$ is de?ned as a tubal matrix whose first frontal slice is the $n\times n$ identity matrix, and other frontal slices are all zeros.
\end{definition}

\begin{definition}[inverse \cite{kilmer2011factorization}]\rm
	For $\mathcal{X}\in \mathbb{K}^{n\times n}_{l}$, if there exists $\mathcal{Y} \in \mathbb{K}^{n\times n}_{l}$ satisfying
	$$\mathcal{X}*\mathcal{Y}=\mathcal{I}_{n}\quad \textrm{ and }\quad \mathcal{Y}*\mathcal{X}=\mathcal{I}_{n},$$
	then $\mathcal{X}$ is said to be invertible, and $\mathcal{Y}$ is called as the inverse of $\mathcal{X}$, which is denoted by $\mathcal{X}^{-1}$.
\end{definition}

\begin{definition}[Moore-Penrose inverse \cite{jin2017generalized}]\rm
	For $\mathcal{X}\in \mathbb{K}^{n\times p}_{l}$, if there exists $\mathcal{Y} \in \mathbb{K}^{p\times n}_{l}$ satisfying
	$$\mathcal{X}*\mathcal{Y}*\mathcal{X}=\mathcal{X},~~~\mathcal{Y}*\mathcal{X}*\mathcal{Y}=\mathcal{Y},~~~(\mathcal{X}*\mathcal{Y})^{T}=\mathcal{X}*\mathcal{Y},~~~(\mathcal{Y}*\mathcal{X})^{T}=\mathcal{Y}*\mathcal{X},$$
	then $\mathcal{Y}$ is called as the Moore-Penrose inverse of $\mathcal{X}$, which is denoted by $\mathcal{X}^{\dag}$.
\end{definition}


\begin{lemma}[see \cite{jin2017generalized}]
	The Moore-Penrose inverse of any $\mathcal{X}\in \mathbb{K}^{n\times p}_{l}$ exists and is unique, and if $\mathcal{X}$ is invertible, then $\mathcal{X}^{\dag}=\mathcal{X}^{-1}$.
\end{lemma}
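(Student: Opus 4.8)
The plan is to reduce the tensor statement to a family of statements about ordinary complex matrices by passing to the Fourier domain, where the t-product block-diagonalizes. Concretely, $\text{bcirc}(\mathcal{X})$ is block-diagonalized by the (normalized) DFT matrix $F_{l}$, namely $(F_{l}\otimes I_{n})\,\text{bcirc}(\mathcal{X})\,(F_{l}^{H}\otimes I_{p})=\text{bdiag}(\widehat{\mathcal{X}}_{(1)},\ldots,\widehat{\mathcal{X}}_{(l)})$, so that the t-product $\mathcal{X}*\mathcal{Y}$ corresponds slicewise to the matrix products $\widehat{\mathcal{X}}_{(k)}\widehat{\mathcal{Y}}_{(k)}$, exactly as in Algorithm \ref{t-product}. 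The first ingredient I would record is that the t-transpose corresponds to the conjugate transpose in the Fourier domain, i.e. $\widehat{(\mathcal{X}^{T})}_{(k)}=(\widehat{\mathcal{X}}_{(k)})^{H}$ for every $k$; this is a short computation using $\mathcal{X}_{(k)}^{T}=\mathcal{X}_{(k)}^{H}$ (the frontal slices are real) together with the definition of the t-transpose (transpose each frontal slice and reverse the slice order from $2$ to $l$).

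With these two dictionaries in hand, I would translate the four Moore--Penrose equations one at a time. Since the FFT along the third mode is a linear bijection and the t-product acts slicewise, the identities $\mathcal{X}*\mathcal{Y}*\mathcal{X}=\mathcal{X}$ and $\mathcal{Y}*\mathcal{X}*\mathcal{Y}=\mathcal{Y}$ become $\widehat{\mathcal{X}}_{(k)}\widehat{\mathcal{Y}}_{(k)}\widehat{\mathcal{X}}_{(k)}=\widehat{\mathcal{X}}_{(k)}$ and $\widehat{\mathcal{Y}}_{(k)}\widehat{\mathcal{X}}_{(k)}\widehat{\mathcal{Y}}_{(k)}=\widehat{\mathcal{Y}}_{(k)}$ for each $k$, while the two symmetry conditions become the Hermitian conditions $(\widehat{\mathcal{X}}_{(k)}\widehat{\mathcal{Y}}_{(k)})^{H}=\widehat{\mathcal{X}}_{(k)}\widehat{\mathcal{Y}}_{(k)}$ and $(\widehat{\mathcal{Y}}_{(k)}\widehat{\mathcal{X}}_{(k)})^{H}=\widehat{\mathcal{Y}}_{(k)}\widehat{\mathcal{X}}_{(k)}$. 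These are precisely the four Penrose equations characterizing the classical Moore--Penrose inverse of the complex matrix $\widehat{\mathcal{X}}_{(k)}$, which is known to exist and to be unique. Hence a tensor $\mathcal{Y}$ satisfies the four tensor conditions if and only if $\widehat{\mathcal{Y}}_{(k)}=(\widehat{\mathcal{X}}_{(k)})^{\dag}$ for every $k$.

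Existence then follows by defining $\mathcal{Y}$ to be the inverse FFT of the tensor whose frontal slices are $(\widehat{\mathcal{X}}_{(k)})^{\dag}$; the one point that needs care---and what I expect to be the main obstacle---is verifying that this $\mathcal{Y}$ is a \emph{real} tensor rather than a genuinely complex one. This is settled by the conjugate symmetry of the Fourier slices of a real tensor, $\widehat{\mathcal{X}}_{(k)}=\overline{\widehat{\mathcal{X}}_{(l-k+2)}}$ for $k\ge 2$ with $\widehat{\mathcal{X}}_{(1)}$ real, combined with the elementary fact $(\overline{A})^{\dag}=\overline{A^{\dag}}$ for the classical pseudoinverse; together these give $\widehat{\mathcal{Y}}_{(l-k+2)}=\overline{\widehat{\mathcal{Y}}_{(k)}}$, so $\widehat{\mathcal{Y}}$ inherits the same conjugate symmetry and its inverse FFT is real. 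Uniqueness is then immediate: any two tensors satisfying the four conditions have, slicewise, the same classical pseudoinverse $(\widehat{\mathcal{X}}_{(k)})^{\dag}$, and injectivity of the FFT forces them to coincide.

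Finally, for invertible $\mathcal{X}$ I would simply check that $\mathcal{X}^{-1}$ satisfies all four equations: $\mathcal{X}*\mathcal{X}^{-1}=\mathcal{X}^{-1}*\mathcal{X}=\mathcal{I}_{n}$ makes the first two read $\mathcal{X}=\mathcal{X}$ and $\mathcal{X}^{-1}=\mathcal{X}^{-1}$, and the last two hold because $\mathcal{I}_{n}^{T}=\mathcal{I}_{n}$. By the uniqueness just established, $\mathcal{X}^{\dag}=\mathcal{X}^{-1}$, which completes the proof.
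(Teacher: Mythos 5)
Your proof is correct. The paper itself gives no argument for this lemma---it is quoted directly from \cite{jin2017generalized}---and your Fourier-domain reduction (slicewise Penrose equations for $\widehat{\mathcal{X}}_{(k)}$, existence and uniqueness of the classical pseudoinverse, plus the conjugate-symmetry check $\widehat{\mathcal{Y}}_{(l-k+2)}=\overline{\widehat{\mathcal{Y}}_{(k)}}$ to guarantee a real tensor) is exactly the standard route by which this result is established for the t-product; the reality check you flag is indeed the only subtle point, and you handle it correctly via $(\overline{A})^{\dag}=\overline{A^{\dag}}$.
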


\begin{definition}[transpose, slice transpose and reverse \cite{kilmer2011factorization,tang2022sketch}]\rm
	Suppose $\mathcal{X}\in \mathbb{K}^{n\times p}_{l}$. The transpose $\mathcal{X}^{T}$ is defined by transposing all of the frontal slices of $\mathcal{X}$ and then reversing the order of the transposed frontal slices from $2$ to $l$, that is
	\begin{equation*}
		\left\{
		\begin{array}{lcl}
			(\mathcal{ X}^{T})_{(1)}=\mathcal{ X}_{(1)}^{T}, && \\
			(\mathcal{ X}^{T})_{(k)}=\mathcal{ X}_{(l-k+2)}^{T}, & &\text{for}~ k=2,3,\cdots,l,
		\end{array} \right.
	\end{equation*}
where $\mathcal{ X}_{(k)}^{T}$  represents the transport of $\mathcal{ X}_{(k)}$ for $k=1,2,\cdots,l$. The slice transpose $\mathcal{X}^{ST}$ is defined by transposing all of the frontal slices of $\mathcal{X}$, that is
\begin{equation*}
	(\mathcal{ X}^{ST})_{(k)}=\mathcal{ X}_{(k)}^{T}, \quad\text{for}~ k=1,2,\cdots,l.
\end{equation*}
And, the reverse $\mathcal{X}^{R}$ is defined by reversing the order of the frontal slices $\mathcal{X}$ from $2$ to $l$, that is
\begin{equation*}
	\left\{
	\begin{array}{lcl}
		(\mathcal{ X}^{R})_{(1)}=\mathcal{ X}_{(1)}, && \\
		(\mathcal{ X}^{R})_{(k)}=\mathcal{ X}_{(l-k+2)}, & &\text{for}~ k=2,3,\cdots,l.
	\end{array} \right.
\end{equation*}
\end{definition}

\begin{definition}[T-symmetric \cite{kilmer2011factorization}]\rm
	For $\mathcal{X}\in \mathbb{K}^{n\times n}_{l}$, it is T-symmetric if $\mathcal{X}=\mathcal{X}^{T}$.
\end{definition}
\begin{definition}[T-symmetric T-positive (semi)definite \cite{zheng2021t}]\rm
	For $\mathcal{X}\in \mathbb{K}^{n\times n }_{l}$, 
	it is T-symmetric T-positive (semi)definite if $\mathcal{X}$ is  T-symmetric and $\langle\overrightarrow{\mathcal{Y}},\mathcal{X}*\overrightarrow{\mathcal{Y}}\rangle>(\geq) 0$ holds for any nonzero $\overrightarrow{\mathcal{Y}}\in \mathbb{K}^{n}_{l}$ (for any $\overrightarrow{\mathcal{Y}}\in \mathbb{K}^{n}_{l}$).
\end{definition}

\begin{definition} [orthogonal tubal matrix \cite{kilmer2011factorization,zhang2018randomizedddd}]\rm
	For $\mathcal{X}\in \mathbb{K}^{n\times p}_{l}$, it is orthogonal if $n=p$ and $$\mathcal{X}^{T}*\mathcal{X}=\mathcal{I}_{n}.$$
 If the above equation holds but $n>p$, then $\mathcal{X}$ is said to be partially orthogonal.
\end{definition}
\begin{definition} [f-diagonal tubal matrix\cite{kilmer2011factorization}]\rm
For $\mathcal{X}\in \mathbb{K}^{n\times p}_{l}$, it is f-diagonal if each of its frontal slices is a diagonal matrix.
\end{definition}
\begin{definition} [t-SVD and tubal rank\cite{kilmer2011factorization,zhang2014novel}]\rm
For $\mathcal{X}\in \mathbb{K}^{n\times p}_{l}$, its t-SVD is $$\mathcal{X}=\mathcal{U}*\Sigma*\mathcal{V}^{T},$$
where $\mathcal{U}\in \mathbb{K}^{n\times n}_{l}$ and $\mathcal{V}\in \mathbb{K}^{p\times p}_{l}$ are orthogonal tubal matrices, and $\Sigma\in \mathbb{K}^{n\times p}_{l}$ is a f-diagonal tubal matrix.
And the tubal rank of $\mathcal{X}$ is denoted by $\text{rank}_{t}(\mathcal{X})$ and defined as the number of nonzero singular tubal scalars of $\Sigma$, that is, $$\text{rank}_{t}(\mathcal{X})=\text{card}\{i:\Sigma_{(i,i,:)}\neq \mathbf{0}\},$$
where $\text{card}$ denotes the cardinality of a set and  $\mathbf{0}$ is zero tubal scalar.
\end{definition}
\begin{definition} [thin t-SVD\cite{zhang2018randomizedddd}]\rm
For $\mathcal{X}\in \mathbb{K}^{n\times p}_{l}$ with tubal rank $r$, its thin t-SVD is $$\mathcal{X}=\mathcal{U}_{r}*\Sigma_{r}*\mathcal{V}_{r}^{T},$$
where $\mathcal{U}_{r}\in \mathbb{K}^{n\times r}_{l}$ and $\mathcal{V}_{r}\in \mathbb{K}^{p\times r}_{l}$ are partially orthogonal tubal matrices, and $\Sigma_{r}\in \mathbb{K}^{r\times r}_{l}$ is a f-diagonal tubal matrix.
\end{definition}
\begin{definition} [t-vectorization \cite{tang2022sketch}]\rm
	For $\mathcal{X}\in\mathbb{K}_{l}^{n\times p}$, its t-vectorization is defined as 
	$$\text{vec}_{t}(\mathcal{X}):=\begin{bmatrix}
		\mathcal{ X}_{(:,1,:)}  \\
		\vdots   \\
		\mathcal{ X}_{(:,p,:)} \\
	\end{bmatrix}\in\mathbb{K}_{l}^{np}.$$
\end{definition}
\begin{definition} [t-Kronecker product \cite{tang2022sketch}]\rm
	For $\mathcal{ X}\in\mathbb{K}_{l}^{n\times p}$ and $\mathcal{ Y}\in\mathbb{K}_{l}^{r\times s}$, their t-Kronecker product is defined as 
	$$\mathcal{X}\otimes_{t}\mathcal{Y}:=\begin{bmatrix}
		\mathcal{ X}_{(1,1,:)}*\mathcal{ Y} &  \cdots & \mathcal{ X}_{(1,p,:)} *\mathcal{ Y} \\
		\vdots 	&    \ddots &  	\vdots  \\
		\mathcal{ X}_{(n,1,:)}*\mathcal{ Y} &  \cdots  &  \mathcal{ X}_{(n,p,:)} *\mathcal{ Y}\\
	\end{bmatrix}\in\mathbb{K}_{l}^{nr\times ps}.$$
\end{definition}
\begin{lemma}[see \cite{tang2022sketch}]
	Suppose $\mathcal{ X}$, $\mathcal{ Y}$, $\mathcal{ Z}$ and $\mathcal{W}$ are tubal matrices of any multiplicable dimension. Then
	\begin{enumerate}
		\item $\text{\rm vec}_{t}(\mathcal{X}*\mathcal{ Y}*\mathcal{ Z})=(\mathcal{ Z}^{ST}\otimes_{t}\mathcal{ X})*\text{\rm vec}_{t}(\mathcal{ Y})$;
		\item $(\mathcal{ X}\otimes_{t}\mathcal{ Y})^{T}=\mathcal{ X}^{T}\otimes_{t}\mathcal{ Y}^{T}$,  $(\mathcal{ X}\otimes_{t}\mathcal{ Y})^{ST}=\mathcal{ X}^{ST}\otimes_{t}\mathcal{ Y}^{ST}$, $(\mathcal{ X}\otimes_{t}\mathcal{ Y})^{R}=\mathcal{ X}^{R}\otimes_{t}\mathcal{ Y}^{R}$;
		
		\item $(\mathcal{ X}\otimes_{t}\mathcal{ Y})^{\dag}=\mathcal{ X}^{\dag}\otimes_{t}\mathcal{ Y}^{\dag}$;
		
		\item $(\mathcal{ X}*\mathcal{ Y})\otimes_{t}(\mathcal{ Z}*\mathcal{ W})=(\mathcal{ X}\otimes_{t}\mathcal{ Z})*(\mathcal{ Y}\otimes_{t}\mathcal{ W})$;
		
		\item  $(\mathcal{ X}+\mathcal{ Y})\otimes_{t}\mathcal{ Z}=(\mathcal{ X}\otimes_{t}\mathcal{ Z})+(\mathcal{ Y}\otimes_{t}\mathcal{ Z})$.
	\end{enumerate}
\end{lemma}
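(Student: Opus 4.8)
The plan is to transfer every identity to the Fourier domain along the third mode, where the t-product decouples into $l$ independent frontal-slice matrix products, and then to invoke the corresponding classical matrix identities one slice at a time. Writing $\widehat{\mathcal{X}}_{(k)}$ for the $k$-th frontal slice of $\texttt{fft}(\mathcal{X},[\,],3)$, the foundation is the following dictionary, which I would establish first by unfolding the relevant definitions:
\begin{align*}
\widehat{(\mathcal{X}*\mathcal{Y})}_{(k)} &= \widehat{\mathcal{X}}_{(k)}\widehat{\mathcal{Y}}_{(k)}, & \widehat{(\mathcal{X}\otimes_{t}\mathcal{Y})}_{(k)} &= \widehat{\mathcal{X}}_{(k)}\otimes\widehat{\mathcal{Y}}_{(k)}, \\
\widehat{\text{vec}_{t}(\mathcal{X})}_{(k)} &= \text{vec}(\widehat{\mathcal{X}}_{(k)}), & \widehat{(\mathcal{X}^{ST})}_{(k)} &= (\widehat{\mathcal{X}}_{(k)})^{T},
\end{align*}
together with $\widehat{(\mathcal{X}^{R})}_{(k)}=\overline{\widehat{\mathcal{X}}_{(k)}}$, whence $\widehat{(\mathcal{X}^{T})}_{(k)}=(\widehat{\mathcal{X}}_{(k)})^{H}$, and $\widehat{(\mathcal{X}^{\dag})}_{(k)}=(\widehat{\mathcal{X}}_{(k)})^{\dag}$. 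Here $\otimes$, $\text{vec}$, $(\cdot)^{T}$, $(\cdot)^{H}$ and $(\cdot)^{\dag}$ denote the ordinary matrix Kronecker product, column-stacking vectorization, transpose, conjugate transpose and Moore--Penrose inverse. Since the third-mode $\texttt{fft}$ is an invertible linear map, proving an identity in the tensor domain is equivalent to proving it for every $k$ in the Fourier domain.

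Granting this dictionary, each of the five claims reduces to a standard matrix fact applied to $\widehat{\mathcal{X}}_{(k)}$, $\widehat{\mathcal{Y}}_{(k)}$, and so on. For item (5) I would use bilinearity of $\otimes$; for item (4) the mixed-product rule $(AB)\otimes(CD)=(A\otimes C)(B\otimes D)$; for item (2) the relations $(A\otimes B)^{T}=A^{T}\otimes B^{T}$, $(A\otimes B)^{H}=A^{H}\otimes B^{H}$ and $\overline{A\otimes B}=\overline{A}\otimes\overline{B}$ matched against the transpose, slice-transpose and reverse entries of the dictionary; for item (3) the classical $(A\otimes B)^{\dag}=A^{\dag}\otimes B^{\dag}$; and for item (1) the vectorization identity $\text{vec}(ABC)=(C^{T}\otimes A)\text{vec}(B)$. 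In the last case the computation
$$\widehat{\text{vec}_{t}(\mathcal{X}*\mathcal{Y}*\mathcal{Z})}_{(k)}=\text{vec}\big(\widehat{\mathcal{X}}_{(k)}\widehat{\mathcal{Y}}_{(k)}\widehat{\mathcal{Z}}_{(k)}\big)=\big((\widehat{\mathcal{Z}}_{(k)})^{T}\otimes\widehat{\mathcal{X}}_{(k)}\big)\text{vec}(\widehat{\mathcal{Y}}_{(k)})$$
makes transparent why the \emph{slice transpose} $\mathcal{Z}^{ST}$, rather than the t-transpose $\mathcal{Z}^{T}$, is the correct operator: the plain matrix transpose appearing in the Fourier slice pulls back to $ST$, whereas $(\cdot)^{T}$ would inject an unwanted conjugation through the reverse.

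I expect the only genuine work to lie in verifying the dictionary, and within it the two entries $\widehat{(\mathcal{X}\otimes_{t}\mathcal{Y})}_{(k)}=\widehat{\mathcal{X}}_{(k)}\otimes\widehat{\mathcal{Y}}_{(k)}$ and $\widehat{\text{vec}_{t}(\mathcal{X})}_{(k)}=\text{vec}(\widehat{\mathcal{X}}_{(k)})$. For the former I would expand the block definition of $\otimes_{t}$, observe that each tube scalar $\mathcal{X}_{(i,j,:)}$ acts in the Fourier domain as pointwise multiplication by the scalars $\widehat{\mathcal{X}}_{(i,j,k)}$, and check that the resulting $nr\times ps$ block pattern is precisely the Kronecker product of $\widehat{\mathcal{X}}_{(k)}$ and $\widehat{\mathcal{Y}}_{(k)}$. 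For the latter I would note that stacking lateral slices commutes with the third-mode $\texttt{fft}$, so the $k$-th Fourier slice of $\text{vec}_{t}(\mathcal{X})$ is exactly the column stack of $\widehat{\mathcal{X}}_{(k)}$. The remaining transpose, reverse and pseudoinverse entries are immediate from their definitions and the known block-diagonalization of $\text{bcirc}(\mathcal{X})$ by the DFT matrix. Once the dictionary is in place, all five identities hold slice-wise and hence globally after applying $\texttt{ifft}$.
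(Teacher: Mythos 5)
The paper does not prove this lemma --- it imports it from the cited reference \cite{tang2022sketch} without proof --- so there is no in-paper argument to compare against. Your proposal is correct and is the standard route: the Fourier-slice dictionary you set up (t-product, $\otimes_{t}$, $\text{vec}_{t}$, $ST$, $R$, $T$, and $\dag$ all acting slice-wise after \texttt{fft} along the third mode, with $T$ pulling back to the conjugate transpose and $R$ to conjugation on real tensors) is exactly the mechanism the paper itself relies on elsewhere, e.g.\ in the proofs of Lemma \ref{RanMutil} and Theorem \ref{optimalprob}, and each of the five items then follows from the classical matrix identity you name, with invertibility of the third-mode DFT transferring the slice-wise equalities back to the tensor domain. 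Your observation that $\text{vec}(ABC)=(C^{T}\otimes A)\text{vec}(B)$ forces the slice transpose $\mathcal{Z}^{ST}$ rather than $\mathcal{Z}^{T}$ in item (1) is the one genuinely delicate point, and you have it right.
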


	\begin{definition}[random tubal matrix \cite{jin2017generalized}]\rm
	A random tubal matrix is defined as a tubal matrix whose elements are all random variables.
\end{definition}
\begin{definition}[expectation and variance \cite{jin2017generalized}]\rm
	Supposing $\mathcal{X}\in\mathbb{K}^{n\times p}_{l}$ is a random tubal matrix, its expectation and variance are defined as
	$$\mathbf{E}[\mathcal{X}]=(\mathbf{E}[\mathcal{X}_{(i,j,k)}])\in\mathbb{K}^{n\times p}_{l},$$
	and
	$$\mathbf{Var}[\mathcal{X}]=\mathbf{E}[(\mathcal{X}-\mathbf{E}[\mathcal{X}])*(\mathcal{X}-\mathbf{E}[\mathcal{X}])^{T}]\in\mathbb{K}^{n\times n}_{l},$$
	respectively.
\end{definition}

\begin{definition}[covariance \cite{jin2017generalized}]\rm
	Supposing $\mathcal{X}\in\mathbb{K}^{n\times p}_{l}$ and $\mathcal{Y}\in\mathbb{K}^{n\times p}_{l}$ are random tubal matrices, their covariance is defined as $$\mathbf{Cov}(\mathcal{X},\mathcal{Y})=\mathbf{E}[(\mathcal{X}-\mathbf{E}[\mathcal{X}])*(\mathcal{Y}-\mathbf{E}[\mathcal{Y}])^{T}]\in\mathbb{K}^{n\times n}_{l}.$$
\end{definition}

\begin{lemma} Suppose $\mathcal{X}$ and $\mathcal{Y}$ are random tubal matrices, and $\mathcal{A}$ and $\mathcal{B}$ are constant tubal matrices. Then 
	\begin{enumerate}
		\item $\mathbf{E}[\mathcal{A}*\mathcal{X}*\mathcal{B}]=\mathcal{A}*\mathbf{E}[\mathcal{X}]*\mathcal{B}$, $\mathbf{Var}[\mathcal{A}*\mathcal{X}]=\mathcal{A}*\mathbf{Var}[\mathcal{X}]*\mathcal{A}^{T}$;
		\item $\mathbf{Var}[\mathcal{X}]=\mathbf{E}[\mathcal{X}*\mathcal{X}^{T}]-\mathbf{E}[\mathcal{X}]*\mathbf{E}[\mathcal{X}]^{T}$,  $\mathbf{Var}[\mathcal{X}\mid\mathcal{Y}]=\mathbf{E}[\mathcal{X}*\mathcal{X}^{T}\mid\mathcal{Y}]-\mathbf{E}[\mathcal{X}\mid\mathcal{Y}]*\mathbf{E}[\mathcal{X}\mid\mathcal{Y}]^{T}$;
		\item $\mathbf{E}[\mathcal{X}]=\mathbf{E}[\mathbf{E}[\mathcal{X}\mid\mathcal{Y}]]$, $\mathbf{Var}[\mathcal{X}]=\mathbf{Var}[\mathbf{E}[\mathcal{X}\mid\mathcal{Y}]]+\mathbf{E}[\mathbf{Var}[\mathcal{X}\mid\mathcal{Y}]]$.
	\end{enumerate}
\end{lemma}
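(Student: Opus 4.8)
The plan is to treat all five identities as the tubal-matrix analogues of the familiar scalar/matrix rules for expectation and variance, and to reduce each of them to three elementary \emph{commutation facts} about the entrywise expectation $\mathbf{E}[\cdot]$: that it is additive, that it commutes with the transpose, and that it can be pulled through a t-product by a constant tubal matrix. Once these are in hand, every item follows by formal algebra that mirrors the matrix proofs, the only genuinely tensor-specific ingredient being the third fact together with the standard t-product identities (distributivity over addition, associativity, and $(\mathcal{X}*\mathcal{Y})^{T}=\mathcal{Y}^{T}*\mathcal{X}^{T}$).

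The key step, and the only place where the structure of the t-product really enters, is the first identity $\mathbf{E}[\mathcal{A}*\mathcal{X}*\mathcal{B}]=\mathcal{A}*\mathbf{E}[\mathcal{X}]*\mathcal{B}$. First I would establish the one-sided version $\mathbf{E}[\mathcal{A}*\mathcal{X}]=\mathcal{A}*\mathbf{E}[\mathcal{X}]$ by writing $\mathcal{A}*\mathcal{X}=\text{fold}(\text{bcirc}(\mathcal{A})\,\text{unfold}(\mathcal{X}))$. Since $\text{unfold}$ and $\text{fold}$ only rearrange entries and $\mathbf{E}[\cdot]$ acts entrywise, $\mathbf{E}$ commutes with both, and for ordinary matrices $\mathbf{E}[CM]=C\,\mathbf{E}[M]$ whenever $C$ is constant; applying this with $C=\text{bcirc}(\mathcal{A})$ and $M=\text{unfold}(\mathcal{X})$ gives the claim. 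The right-hand factor $\mathcal{B}$ is then dealt with either by associativity and the transpose rule $\mathcal{X}*\mathcal{B}=(\mathcal{B}^{T}*\mathcal{X}^{T})^{T}$ together with the (entrywise-obvious) fact $\mathbf{E}[\mathcal{X}^{T}]=\mathbf{E}[\mathcal{X}]^{T}$, or simply by observing that every entry of $\mathcal{A}*\mathcal{X}*\mathcal{B}$ is a fixed linear combination of entries of $\mathcal{X}$. I expect this to be the main obstacle only in the bookkeeping sense: one must verify that the fold/unfold/bcirc machinery respects entrywise expectation, after which nothing tensor-specific remains.

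For the remaining identities of items 1 and 2 I would proceed formally. For $\mathbf{Var}[\mathcal{A}*\mathcal{X}]=\mathcal{A}*\mathbf{Var}[\mathcal{X}]*\mathcal{A}^{T}$, the one-sided rule gives $\mathcal{A}*\mathcal{X}-\mathbf{E}[\mathcal{A}*\mathcal{X}]=\mathcal{A}*(\mathcal{X}-\mathbf{E}[\mathcal{X}])$; substituting into the definition of variance, using $(\mathcal{A}*\mathcal{Z})^{T}=\mathcal{Z}^{T}*\mathcal{A}^{T}$, and pulling the constants $\mathcal{A}$ and $\mathcal{A}^{T}$ outside the expectation via the first identity yields the result. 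For item 2, I would expand $(\mathcal{X}-\mathbf{E}[\mathcal{X}])*(\mathcal{X}-\mathbf{E}[\mathcal{X}])^{T}$ by distributivity into four terms, take the expectation term by term, and use additivity together with $\mathbf{E}[\mathcal{X}*\mathbf{E}[\mathcal{X}]^{T}]=\mathbf{E}[\mathcal{X}]*\mathbf{E}[\mathcal{X}]^{T}$ and $\mathbf{E}[\mathbf{E}[\mathcal{X}]*\mathcal{X}^{T}]=\mathbf{E}[\mathcal{X}]*\mathbf{E}[\mathcal{X}]^{T}$ (both instances of the first identity with the constant tubal matrix $\mathbf{E}[\mathcal{X}]$); the three cross and constant terms collapse to a single $\mathbf{E}[\mathcal{X}]*\mathbf{E}[\mathcal{X}]^{T}$, leaving $\mathbf{Var}[\mathcal{X}]=\mathbf{E}[\mathcal{X}*\mathcal{X}^{T}]-\mathbf{E}[\mathcal{X}]*\mathbf{E}[\mathcal{X}]^{T}$. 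The conditional version is the verbatim same computation with $\mathbf{E}[\cdot\mid\mathcal{Y}]$ throughout, which is legitimate because the conditional expectation is again defined entrywise and obeys the same three commutation facts.

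Finally, item 3a is the entrywise lift of the scalar tower property $\mathbf{E}[\mathcal{X}_{(i,j,k)}]=\mathbf{E}[\mathbf{E}[\mathcal{X}_{(i,j,k)}\mid\mathcal{Y}]]$, which holds coordinate by coordinate and hence at the tubal-matrix level since both sides are formed entrywise. For item 3b I would combine the unconditional and conditional forms of item 2 with 3a: writing $\mathbf{E}[\mathbf{Var}[\mathcal{X}\mid\mathcal{Y}]]=\mathbf{E}[\mathcal{X}*\mathcal{X}^{T}]-\mathbf{E}[\mathbf{E}[\mathcal{X}\mid\mathcal{Y}]*\mathbf{E}[\mathcal{X}\mid\mathcal{Y}]^{T}]$ and $\mathbf{Var}[\mathbf{E}[\mathcal{X}\mid\mathcal{Y}]]=\mathbf{E}[\mathbf{E}[\mathcal{X}\mid\mathcal{Y}]*\mathbf{E}[\mathcal{X}\mid\mathcal{Y}]^{T}]-\mathbf{E}[\mathcal{X}]*\mathbf{E}[\mathcal{X}]^{T}$ (the latter using item 2 applied to the random tubal matrix $\mathbf{E}[\mathcal{X}\mid\mathcal{Y}]$ and then 3a), the mixed term $\mathbf{E}[\mathbf{E}[\mathcal{X}\mid\mathcal{Y}]*\mathbf{E}[\mathcal{X}\mid\mathcal{Y}]^{T}]$ cancels upon adding, and what survives is exactly $\mathbf{E}[\mathcal{X}*\mathcal{X}^{T}]-\mathbf{E}[\mathcal{X}]*\mathbf{E}[\mathcal{X}]^{T}=\mathbf{Var}[\mathcal{X}]$ by item 2. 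This completes the plan.
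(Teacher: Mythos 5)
Your proposal is correct, and since the paper simply declares this lemma ``straightforward, but tedious'' and omits the proof, your plan supplies exactly the standard argument one would write: reduce everything to entrywise linearity of $\mathbf{E}$, the fact that each entry of $\mathcal{A}*\mathcal{X}*\mathcal{B}$ is a fixed linear combination of entries of $\mathcal{X}$ (via $\mathrm{fold}/\mathrm{bcirc}/\mathrm{unfold}$), and the t-product transpose rule, then run the usual matrix-variance manipulations. The only point worth tightening is in the conditional version of item 2: there you pull $\mathbf{E}[\mathcal{X}\mid\mathcal{Y}]$ out of $\mathbf{E}[\,\cdot\mid\mathcal{Y}]$, and that factor is not constant but $\mathcal{Y}$-measurable, so you should invoke the entrywise ``taking out what is known'' property rather than your third commutation fact as stated for constant tubal matrices.
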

\begin{proof}
	The proof is straightforward, but tedious, so we omit it.
\end{proof}
\begin{definition}[tubal scalar function\cite{miao2020generalized}]\label{def;itubalscalarfunction}\rm
Supposing $f: \mathbb{C}\to \mathbb{C}$ is a scalar function, its induced tubal scalar function $ \mathbf{f}: \mathbb{K}_{l}\to \mathbb{K}_{l}$
is defined as:
\begin{align*}
	\mathbf{f}(\mathbf{x})=\text{bcirc}^{-1}\left(F_{l}\begin{bmatrix}
		f(\widehat{\mathbf{x}}_{(1)}) &    & \\
			&    \ddots &  \\
	&  &   f(\widehat{\mathbf{x}}_{(l)})\\
	\end{bmatrix}F_{l}^{H} \right),
\end{align*}
where $F_{l}$ is the unitary DFT matrix and
\begin{align*}
\begin{bmatrix}
	\widehat{\mathbf{x}}_{(1)} &    & \\
	&    \ddots &  \\
	&    & \widehat{\mathbf{x}}_{(l)}\\
\end{bmatrix}=F_{l}^{H}\text{bcirc}(\mathbf{x})F_{l}.
\end{align*}
\end{definition}
\begin{definition}[derivative of tubal scalar function]\label{def;deritubalscalarfunction}\rm
Supposing $ \mathbf{f}: \mathbb{K}_{l}\to \mathbb{K}_{l}$ is a tubal scalar function, for $\mathbf{x}, \Delta\mathbf{x}\in\mathbb{K}_{l}$, if there exists $\mathbf{g}: \mathbb{K}_{l}\to \mathbb{K}_{l}$ satisfying
\begin{align*}
	\lim_{\Delta\mathbf{x}\rightarrow\mathbf{0}}\frac{\| \mathbf{f}(\mathbf{x}+\Delta\mathbf{x})-\mathbf{f}(\mathbf{x})-\mathbf{g}(\Delta\mathbf{x})\|_{F}}{\| \Delta\mathbf{x}\|_{F}}=0,
\end{align*}
where $\|\cdot\|_{F}$ denotes the Frobenius norm, then $\mathbf{f}$ is said to be differentiable at $\mathbf{x}$, and $\mathbf{g}(\mathbf{x})$ is called as the derivative of $\mathbf{f}$ at $\mathbf{x}$, which is denoted by $\mathbf{f}'(\mathbf{x})$.
\end{definition}

From Definitions \ref{def;itubalscalarfunction} and \ref{def;deritubalscalarfunction}, we can check that
\begin{align*}
	\mathbf{f}'(\mathbf{x})=\frac{\partial\mathbf{f}}{\partial\mathbf{x}}=\text{bcirc}^{-1}\left(F_{l}\begin{bmatrix}
		f'(\widehat{\mathbf{x}}_{(1)})	  &  & \\
		&    \ddots &  \\
		&    & f'(\widehat{\mathbf{x}}_{(l)})\\
	\end{bmatrix}F_{l}^{H}\right),
\end{align*}
where $f'(x)=\frac{\partial f(x)}{\partial x}$ is the derivative of $f$ at $x$.

\begin{definition}[gradient]\label{gradient}\rm
	Given a function $\mathbf{f}: \mathbb{K}_{l}^{m}\to \mathbb{K}_{l}$, for $\overrightarrow{\mathcal{X}}=(\mathbf{x}_{1},\cdots,\mathbf{x}_{m})^{ST}, \Delta\overrightarrow{\mathcal{X}}\in\mathbb{K}_{l}^{m}$, if there exists $\overrightarrow{\mathcal{G}}\in\mathbb{K}_{l}^{m}$ satisfying
	\begin{align*}
		\lim_{\Delta\overrightarrow{\mathcal{X}}\rightarrow\overrightarrow{\mathbf{0}}}\frac{\| \mathbf{f}(\overrightarrow{\mathcal{X}}+\Delta\overrightarrow{\mathcal{X}})-\mathbf{f}(\overrightarrow{\mathcal{X}})-\overrightarrow{\mathcal{G}}^{ST}*\Delta\overrightarrow{\mathcal{X}}\|_{F}}{\| \Delta\overrightarrow{\mathcal{X}}\|_{F}}=0,
	\end{align*}
where $\overrightarrow{\mathbf{0}}$ is zero tubal vector, then $\mathbf{f}$ is said to be differentiable at $\overrightarrow{\mathcal{X}}$, and $\overrightarrow{\mathcal{G}}$ is called as the gradient of $\mathbf{f}$ at $\overrightarrow{\mathcal{X}}$, which is denoted by $\triangledown\mathbf{f}(\overrightarrow{\mathcal{X}})$.
\end{definition}

From Definition \ref{gradient}, we can check that
\begin{align*}
\triangledown\mathbf{f}(\overrightarrow{\mathcal{X}})=\frac{\partial\mathbf{f}}{\partial\overrightarrow{\mathcal{X}}}=\left(\frac{\partial\mathbf{f}}{\partial\mathbf{x}_{1}}, \cdots, \frac{\partial\mathbf{f}}{\partial\mathbf{x}_{m}}\right)^{ST}\in\mathbb{K}_{l}^{m},
\end{align*}
where $\overrightarrow{\mathcal{X}}=(\mathbf{x}_{1}, \cdots, \mathbf{x}_{m})^{ST }\in\mathbb{K}^{m}_{l}$.
\begin{definition}[Jacobian tubal matrix]\label{jacobian}\rm
Given a function  $\overrightarrow{\mathcal{F}}(\overrightarrow{\mathcal{X}}): \mathbb{K}_{l}^{m}\to \mathbb{K}^{p}_{l}$, its Jacobian tubal matrix is defined as 
\begin{align*}
	D\overrightarrow{\mathcal{F}}=\frac{\partial\overrightarrow{\mathcal{F}}}{\partial\overrightarrow{\mathcal{X}}^{ST}}=\left(\frac{\partial\overrightarrow{\mathcal{F}}}{\partial\mathbf{x}_{1}}, \cdots, \frac{\partial\overrightarrow{\mathcal{F}}}{\partial\mathbf{x}_{m}}\right)=\begin{bmatrix}
	\frac{\partial\mathbf{f}_{1}}{\partial\mathbf{x}_{1}}	& \cdots   &\frac{\partial\mathbf{f}_{1}}{\partial\mathbf{x}_{m}} \\
	\vdots 	&    \ddots & \vdots  \\
	\frac{\partial\mathbf{f}_{p}}{\partial\mathbf{x}_{1}}	&\cdots  & \frac{\partial\mathbf{f}_{p}}{\partial\mathbf{x}_{m}}\\
	\end{bmatrix}\in\mathbb{K}_{l}^{p\times m}.
\end{align*}

\end{definition}

By Definition \ref{jacobian}, for a function $\mathcal{F}(\overrightarrow{\mathcal{X}}): \mathbb{K}^{m}_{l}\to \mathbb{K}^{p\times q}_{l}$,  we have that the Jacobian tubal matrix of $\text{vec}_{t}(\mathcal{F})$ is
\begin{align*}
	D\text{vec}_{t}(\mathcal{F})=\frac{\partial\text{vec}_{t}(\mathcal{F})}{\partial\overrightarrow{\mathcal{X}}^{ST}}=\left(\frac{\partial\text{vec}_{t}(\mathcal{F})}{\partial\mathbf{x}_{1}}, \cdots, \frac{\partial\text{vec}_{t}(\mathcal{F})}{\partial\mathbf{x}_{m}}\right).
\end{align*}
\begin{lemma}
	Given a function $\mathcal{F}(\overrightarrow{\mathcal{X}}): \mathbb{K}^{m}_{l}\to \mathbb{K}^{p\times q}_{l}$, suppose $\mathcal{A}\in\mathbb{K}^{r\times p}_{l}$ and $\mathcal{B}\in\mathbb{K}^{q\times s}_{l}$ are constant tubal matrices. Then
	\begin{align*}
		\frac{\partial\text{vec}_{t}(\mathcal{A}*\mathcal{F}*\mathcal{B})}{\partial\overrightarrow{\mathcal{X}}^{ST}}=(\mathcal{B}^{ST}\otimes_{t}\mathcal{A})*\frac{\partial\text{vec}_{t}(\mathcal{F})}{\partial\overrightarrow{\mathcal{X}}^{ST}},\\
		\frac{\partial\text{vec}_{t}(\mathcal{F}^{-1})}{\partial\overrightarrow{\mathcal{X}}^{ST}}=-((\mathcal{F}^{-1})^{ST}\otimes_{t}\mathcal{F}^{-1})*\frac{\partial\text{vec}_{t}(\mathcal{F})}{\partial\overrightarrow{\mathcal{X}}^{ST}}.
	\end{align*}
\end{lemma}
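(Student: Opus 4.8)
The plan is to reduce both identities to column-wise statements indexed by the tubal-scalar variables $\mathbf{x}_{1},\dots,\mathbf{x}_{m}$ of $\overrightarrow{\mathcal{X}}$, prove each column separately, and then reassemble using the definition of the Jacobian tubal matrix (Definition \ref{jacobian}). The workhorse throughout is the vectorization identity $\text{vec}_{t}(\mathcal{X}*\mathcal{Y}*\mathcal{Z})=(\mathcal{Z}^{ST}\otimes_{t}\mathcal{X})*\text{vec}_{t}(\mathcal{Y})$ recorded earlier, which rewrites a three-factor t-product inside $\text{vec}_{t}$ as a t-Kronecker operator acting on $\text{vec}_{t}$ of the middle factor. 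I would also first establish two auxiliary differentiation rules, both consequences of the limit definition in Definitions \ref{def;deritubalscalarfunction} and \ref{gradient} together with the bilinearity of $*$: (i) for a constant tubal matrix $\mathcal{M}$, $\frac{\partial(\mathcal{M}*\overrightarrow{\mathcal{G}})}{\partial\mathbf{x}_{j}}=\mathcal{M}*\frac{\partial\overrightarrow{\mathcal{G}}}{\partial\mathbf{x}_{j}}$; and (ii) the Leibniz rule $\frac{\partial(\mathcal{G}*\mathcal{H})}{\partial\mathbf{x}_{j}}=\frac{\partial\mathcal{G}}{\partial\mathbf{x}_{j}}*\mathcal{H}+\mathcal{G}*\frac{\partial\mathcal{H}}{\partial\mathbf{x}_{j}}$. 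Finally, since $\text{vec}_{t}$ is merely a fixed stacking of lateral slices, it commutes with $\frac{\partial}{\partial\mathbf{x}_{j}}$.

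For the first identity, I would use that $\mathcal{A}$ and $\mathcal{B}$ are constant and apply the vectorization identity to get $\text{vec}_{t}(\mathcal{A}*\mathcal{F}*\mathcal{B})=(\mathcal{B}^{ST}\otimes_{t}\mathcal{A})*\text{vec}_{t}(\mathcal{F})$, where the operator $\mathcal{B}^{ST}\otimes_{t}\mathcal{A}$ does not depend on $\overrightarrow{\mathcal{X}}$. Differentiating the $j$-th column by rule (i) with $\mathcal{M}=\mathcal{B}^{ST}\otimes_{t}\mathcal{A}$ gives $\frac{\partial\text{vec}_{t}(\mathcal{A}*\mathcal{F}*\mathcal{B})}{\partial\mathbf{x}_{j}}=(\mathcal{B}^{ST}\otimes_{t}\mathcal{A})*\frac{\partial\text{vec}_{t}(\mathcal{F})}{\partial\mathbf{x}_{j}}$ for each $j$, and assembling these $m$ columns according to Definition \ref{jacobian} yields the claimed equality.

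For the second identity, I would differentiate the defining relation $\mathcal{F}*\mathcal{F}^{-1}=\mathcal{I}$. Applying the Leibniz rule (ii) and using $\frac{\partial\mathcal{I}}{\partial\mathbf{x}_{j}}=\mathbf{0}$ gives $\frac{\partial\mathcal{F}}{\partial\mathbf{x}_{j}}*\mathcal{F}^{-1}+\mathcal{F}*\frac{\partial\mathcal{F}^{-1}}{\partial\mathbf{x}_{j}}=\mathbf{0}$, whence, left-multiplying by $\mathcal{F}^{-1}$, $\frac{\partial\mathcal{F}^{-1}}{\partial\mathbf{x}_{j}}=-\mathcal{F}^{-1}*\frac{\partial\mathcal{F}}{\partial\mathbf{x}_{j}}*\mathcal{F}^{-1}$. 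Taking $\text{vec}_{t}$ of both sides (which commutes with the derivative) and applying the vectorization identity with outer factors $\mathcal{X}=\mathcal{Z}=\mathcal{F}^{-1}$ and middle factor $\mathcal{Y}=\frac{\partial\mathcal{F}}{\partial\mathbf{x}_{j}}$ gives $\frac{\partial\text{vec}_{t}(\mathcal{F}^{-1})}{\partial\mathbf{x}_{j}}=-((\mathcal{F}^{-1})^{ST}\otimes_{t}\mathcal{F}^{-1})*\frac{\partial\text{vec}_{t}(\mathcal{F})}{\partial\mathbf{x}_{j}}$; assembling the columns via Definition \ref{jacobian} completes the proof.

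The main obstacle I expect is not the algebra above but the rigorous verification of the auxiliary rules (i) and (ii) directly from the limit definition in Definitions \ref{def;deritubalscalarfunction} and \ref{gradient}, since differentiation here is defined through a Frobenius-norm limit in the tubal-scalar variables rather than componentwise. The key technical point is that the t-product is bilinear and continuous, being a composition of the fixed linear reshapings $\text{bcirc}$ and $\text{unfold}$ with ordinary matrix multiplication; hence the standard first-order expansion goes through, with the cross term in $(\mathcal{G}+\Delta\mathcal{G})*(\mathcal{H}+\Delta\mathcal{H})$ contributing at order $\|\Delta\mathbf{x}\|_{F}^{2}$ and the single increments being linear to leading order. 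Once (i), (ii) and the commutation of $\text{vec}_{t}$ with $\frac{\partial}{\partial\mathbf{x}_{j}}$ are in place, both identities follow by the routine manipulations sketched above.
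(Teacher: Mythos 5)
Your argument is correct: reducing both identities to the vectorization rule $\text{vec}_{t}(\mathcal{X}*\mathcal{Y}*\mathcal{Z})=(\mathcal{Z}^{ST}\otimes_{t}\mathcal{X})*\text{vec}_{t}(\mathcal{Y})$, a Leibniz rule obtained from bilinearity and continuity of the t-product (which diagonalizes slice-wise under the DFT, so the scalar product rule applies), and differentiating $\mathcal{F}*\mathcal{F}^{-1}=\mathcal{I}$ for the second identity is exactly the standard route. The paper itself omits the proof as ``straightforward, but tedious,'' so there is nothing to contrast with; your write-up, including the column-by-column assembly via the Jacobian definition, supplies precisely the argument the authors leave implicit.
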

\begin{proof}
	The proof is straightforward, but tedious, so we omit it.
\end{proof}

Finally, we provide the definition of the random sampling tubal matrix.
\begin{definition}[random sampling tubal matrix \cite{tarzanagh2018fast}]\label{samplingrandom}\rm
	Assume that a random sampling is implemented for choosing $\tau$ lateral slices, one in each of independent and identical distributed (i.i.d.) trials.  A tubal matrix $\mathcal{S} \in\mathbb{K}^{n\times \tau}_{l}$ is called a random sampling tubal matrix, when 
	$\mathcal{S}_{(i,j,1)}=1
	$ if the $i$-th lateral slice is picked in the $j$-th independent trial and $\mathcal{S}_{(i,j,1)}=0$ otherwise, and other frontal slices are all zeros.
\end{definition}

\section{Random subsampling method for the TLS problem}\label{sec_RTLS}
The main idea of the random subsampling method is to first randomly choose $\tau\geq p$ subsamples of the data, i.e., the horizontal slices of $\mathcal{X}$ and the corresponding tubal scalars of $\overrightarrow{\mathcal{Y}}$, using a probability distribution $\{\pi_{i}\}_{i=1}^{n}$. And then form the following weighted TLS problem on the subsamples: 
	\begin{align}\label{WSTLS}
			\min_{\overrightarrow{\mathcal{B}}\in\mathbb{K}^{p}_{l}}\|\mathcal{D}*\mathcal{S}^{T}*\overrightarrow{\mathcal{Y}}-\mathcal{D}*\mathcal{S}^{T}*\mathcal{X}*\overrightarrow{\mathcal{B}}\|_{F}^{2},
	\end{align}
	where $\mathcal{S}\in\mathbb{K}_{l}^{n\times \tau}$ is a random sampling tubal matrix defined in Definition \ref{samplingrandom}, and $\mathcal{D}\in\mathbb{K}_{l}^{\tau\times \tau}$ is a rescaling tubal matrix satisfying that $\mathcal{D}_{(i,i,1)}=\frac{1}{\sqrt{\tau\pi_{k}}}$ if the $k$-th horizontal slice of $\mathcal{X}$ is chosen in the $i$-th random trial, and the others are all zeros.
	By solving (\ref{WSTLS}), an approximation solution of (\ref{TLS}) can be obtained as follows:
	$$\widetilde{\overrightarrow{\mathcal{B}}}_{ \mathcal{W}}=\arg\min_{\overrightarrow{\mathcal{B}}\in\mathbb{K}^{p}_{l}}\|\mathcal{D}*\mathcal{S}^{T}*\overrightarrow{\mathcal{Y}}-\mathcal{D}*\mathcal{S}^{T}*\mathcal{X}*\overrightarrow{\mathcal{B}}\|_{F}^{2}=(\mathcal{D}*\mathcal{S}^{T}*\mathcal{X})^{\dag}*\mathcal{D}*\mathcal{S}^{T}*\overrightarrow{\mathcal{Y}}.$$
	If $\mathcal{D}*\mathcal{S}^{T}*\mathcal{X}$ is further assumed to be of full tubal rank, then $\widetilde{\overrightarrow{\mathcal{B}}}_{ \mathcal{W}}$ can be expressed as
	\begin{align}\label{sol_WTLS}
		\widetilde{\overrightarrow{\mathcal{B}}}_{\mathcal{W}}=(\mathcal{X}^{T}*\mathcal{S}*\mathcal{D}^{2}*\mathcal{S}^{T}*\mathcal{X})^{-1}*\mathcal{X}^{T}*\mathcal{S}*\mathcal{D}^{2}*\mathcal{S}^{T}*\overrightarrow{\mathcal{Y}}=(\mathcal{X}^{T}*\mathcal{W}*\mathcal{X})^{-1}*\mathcal{X}^{T}*\mathcal{W}*\overrightarrow{\mathcal{Y}},
	\end{align}	
	where $\mathcal{W} = \mathcal{S}*\mathcal{D}^{2}*\mathcal{S}^{T}\in\mathbb{K}^{\tau \times \tau}_{l}$. 
	We summarize the random subsampling method in Algorithm \ref{Subsample_TLS}, and call it Subsampling TLS.
	\begin{algorithm}[htbp]
		\caption{Subsampling TLS} \label{Subsample_TLS}
		\hspace*{0.02in} {\bf Input:}
		$\mathcal{X}\in \mathbb{K}^{n\times p }_{l}$, $\overrightarrow{\mathcal{Y}}\in \mathbb{K}^{n }_{l}$, subsampling size $\tau\geq p$  and a probability distribution $\{\pi_{i}\}_{i=1}^{n}$
		\begin{algorithmic}[1]
			\State Initialize $\mathcal{S}\in\mathbb{K}_{l}^{n\times \tau}$ and $\mathcal{D}\in\mathbb{K}_{l}^{\tau \times \tau}$ to zero tubal matrices
			\For{$t=1,\cdots,\tau$}
			\State Pick $i_{t}\in[n]$, where $\mathbf{P}(i_{t}=i)=\pi_{i}$
		    \State $\mathcal{D}_{(t,t,1)}=\frac{1}{\sqrt{\tau\pi_{i_{t}}}}$
		    \State $\mathcal{S}_{(i_{t},t,1)}=1$
			\EndFor
			\State Form the weighted TLS subproblem (\ref{WSTLS})
			\State Solve the weighted TLS subproblem (\ref{WSTLS})
		\end{algorithmic}
		\hspace*{0.02in} {\bf Output:} the approximation solution $\widetilde{\overrightarrow{\mathcal{B}}}_{\mathcal{W}}$
	\end{algorithm}
  \begin{remark}
	Compared with the original TLS problem (\ref{TLS}), the weighted TLS subproblem (\ref{WSTLS}) is 
	less expensive to solve.  Furthermore, $\widetilde{\overrightarrow{\mathcal{B}}}_{ \mathcal{W}}$ acts as an approximation of the least squares solution $\widetilde{\overrightarrow{\mathcal{B}}}_{ \rm ols}$, and its approximate quality depends on the choice of $\{\pi_{i}\}_{i=1}^{n}$.
	Two popular probability distributions are as follows.
	\begin{itemize}
		\item[$\bullet$] \textbf{Uniform sampling.} The sampling probabilities of uniform sampling are de?ned as
		$$\pi_{i}^{Unif}=\frac{1}{n}, \quad\text{for all } i\in[n].$$
		And we refer to the Subsampling TLS method with uniform sampling probability distribution as STLS-Unif.
		\item[$\bullet$] \textbf{Leverage score sampling.}  Let $\mathcal{X}=\mathcal{U}_{\mathcal{X}}*\Sigma_{\mathcal{X}}*\mathcal{V}_{\mathcal{X}}^{T}$ be the thin t-SVD of $\mathcal{X}$. Then the sampling probabilities of leverage score sampling are de?ned as
		$$\pi_{i}^{Lev}=\frac{h_{i}}{\sum_{i=1}^{n}h_{i}}=\frac{\|\mathcal{U}_{\mathcal{X}(i,:,:)}\|_{F}^{2}}{p},\quad\text{for all } i\in[n],$$
		 where $h_{i}=\|\mathcal{U}_{\mathcal{X}(i,:,:)}\|_{F}^{2}$ is referred as the leverage score of the $i$-th sample. And we refer to the Subsampling TLS method with leverage score sampling probability distribution as STLS-Lev.
	 \end{itemize}
 \end{remark}
 \begin{remark}
In the numerical experiments in Section \ref{sub_ex}, we will implement the Subsampling TLS method in the Fourier domain. 
The Fourier version of Algorithm \ref{Subsample_TLS} is given in Algorithms \ref{Subsample_TLS_Four} in the appendix.
  \end{remark}

\section{Optimization perspective}\label{sec_optimization}
We will present the error bounds in the sense of probability for the residual and solution calculated by the Subsampling TLS method, i.e., Algorithm \ref{Subsample_TLS}. Before that, some crucial lemmas are first given as follows.


	\begin{lemma}\label{RanMutil}
	Suppose that $\mathcal{X}\in\mathbb{K}^{m\times n}_{l}$ and $\mathcal{Y}\in\mathbb{K}^{n\times p}_{l}$ are ?xed tubal matrices, and $\mathcal{D}*\mathcal{S}^{T}\in \mathbb{K}^{\tau\times n}_{l}$ is the rescaled  random sampling tubal matrix formed as described in Algorithm \ref{Subsample_TLS}. If the sampling probabilities $\{\pi_{i}\}_{i=1}^{n}$ satisfy
	$$\pi_{i}\geq\beta\frac{\|\mathcal{X}_{(:,i,:)}\|_{F}\|\mathcal{Y}_{(i,:,:)}\|_{F}}{\sum_{i=1}^{n}\|\mathcal{X}_{(:,i,:)}\|_{F}\|\mathcal{Y}_{(i,:,:)}\|_{F}},\quad \beta\in(0,1],$$
	or
	$$\pi_{i}\geq\beta\frac{\|\mathcal{X}_{(:,i,:)}\|_{F}^{2}}{\|\mathcal{X}\|_{F}^{2}},\quad \beta\in(0,1],$$
	then
	\begin{align*}
		\mathbf{E}[\|\mathcal{X}*\mathcal{Y}-\mathcal{X}*\mathcal{S}*\mathcal{D}*\mathcal{D}*\mathcal{S}^{T}*\mathcal{Y}\|_{F}^2]\leq\frac{l}{\beta \tau}\|\mathcal{X}\|_{F}^2\|\mathcal{Y}\|_{F}^2.
	\end{align*}	
\end{lemma}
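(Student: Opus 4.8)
The plan is to generalize the classical approximate matrix multiplication analysis of Drineas, Kannan and Mahoney to the t-product setting, treating each of the $\tau$ sampling trials as an independent, unbiased estimate of a slice outer product. First I would rewrite the sketched product as a sum over trials. Using the lateral--horizontal slice (``column--row'') expansion $\mathcal{X}*\mathcal{Y}=\sum_{i=1}^{n}\mathcal{X}_{(:,i,:)}*\mathcal{Y}_{(i,:,:)}$ together with the structure of $\mathcal{S}$ and $\mathcal{D}$ from Algorithm \ref{Subsample_TLS}, I would show that
\[
\mathcal{X}*\mathcal{S}*\mathcal{D}*\mathcal{D}*\mathcal{S}^{T}*\mathcal{Y}=\sum_{t=1}^{\tau}\mathcal{P}_{t},\qquad \mathcal{P}_{t}:=\frac{1}{\tau\pi_{i_{t}}}\,\mathcal{X}_{(:,i_{t},:)}*\mathcal{Y}_{(i_{t},:,:)},
\]
where $i_{t}$ is the index drawn in trial $t$. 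Since the trials are i.i.d., the $\mathcal{P}_{t}$ are i.i.d. tubal matrices, and summing over the sampling distribution gives $\mathbf{E}[\mathcal{P}_{t}]=\frac{1}{\tau}\mathcal{X}*\mathcal{Y}$, so the estimator is unbiased.

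Next I would use the fact that $\|\cdot\|_{F}^{2}$ is the squared Euclidean norm of the flattened tensor, hence induced by the bilinear inner product $\langle\mathcal{A},\mathcal{B}\rangle=\sum_{i,j,k}\mathcal{A}_{(i,j,k)}\mathcal{B}_{(i,j,k)}$, which ignores the t-product structure. Writing the error as $\sum_{t}(\mathcal{P}_{t}-\mathbf{E}[\mathcal{P}_{t}])$ and invoking independence and zero mean to annihilate the cross terms, I obtain
\[
\mathbf{E}\big[\|\mathcal{X}*\mathcal{Y}-\mathcal{X}*\mathcal{S}*\mathcal{D}*\mathcal{D}*\mathcal{S}^{T}*\mathcal{Y}\|_{F}^{2}\big]=\sum_{t=1}^{\tau}\big(\mathbf{E}[\|\mathcal{P}_{t}\|_{F}^{2}]-\|\mathbf{E}[\mathcal{P}_{t}]\|_{F}^{2}\big)\leq\frac{1}{\tau}\sum_{i=1}^{n}\frac{\|\mathcal{X}_{(:,i,:)}*\mathcal{Y}_{(i,:,:)}\|_{F}^{2}}{\pi_{i}},
\]
where the last step drops the nonnegative term $\sum_{t}\|\mathbf{E}[\mathcal{P}_{t}]\|_{F}^{2}$ and evaluates $\mathbf{E}[\|\mathcal{P}_{t}\|_{F}^{2}]$ against the sampling probabilities.

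The crux is the slice inequality
\[
\|\mathcal{X}_{(:,i,:)}*\mathcal{Y}_{(i,:,:)}\|_{F}^{2}\leq l\,\|\mathcal{X}_{(:,i,:)}\|_{F}^{2}\,\|\mathcal{Y}_{(i,:,:)}\|_{F}^{2},
\]
which is exactly where the factor $l$ is born. I would prove it by passing to the Fourier domain: by Parseval, $\|\mathcal{A}\|_{F}^{2}=\frac{1}{l}\sum_{k=1}^{l}\|\widehat{\mathcal{A}}_{(k)}\|_{F}^{2}$, and under the t-product each frontal slice multiplies independently, so the $k$-th frontal slice of the DFT of $\mathcal{X}_{(:,i,:)}*\mathcal{Y}_{(i,:,:)}$ equals $\widehat{(\mathcal{X}_{(:,i,:)})}_{(k)}\,\widehat{(\mathcal{Y}_{(i,:,:)})}_{(k)}$, an outer product of a column vector and a row vector. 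Hence its squared Frobenius norm is $a_{k}b_{k}$ with $a_{k}=\|\widehat{(\mathcal{X}_{(:,i,:)})}_{(k)}\|_{F}^{2}$ and $b_{k}=\|\widehat{(\mathcal{Y}_{(i,:,:)})}_{(k)}\|_{F}^{2}$. The elementary bound $\sum_{k}a_{k}b_{k}\leq(\sum_{k}a_{k})(\sum_{k}b_{k})$ for nonnegative terms, combined with the two Parseval identities, produces precisely the factor $l$.

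Finally I would substitute this slice bound into the variance estimate, replace $\pi_{i}$ by either assumed lower bound, and close with Cauchy--Schwarz: for the cross-norm distribution the sum collapses to $\frac{l}{\beta\tau}\big(\sum_{i}\|\mathcal{X}_{(:,i,:)}\|_{F}\|\mathcal{Y}_{(i,:,:)}\|_{F}\big)^{2}\leq\frac{l}{\beta\tau}\|\mathcal{X}\|_{F}^{2}\|\mathcal{Y}\|_{F}^{2}$, while for the column-norm distribution the $\|\mathcal{X}_{(:,i,:)}\|_{F}^{2}$ factors cancel directly, leaving $\frac{l}{\beta\tau}\|\mathcal{X}\|_{F}^{2}\sum_{i}\|\mathcal{Y}_{(i,:,:)}\|_{F}^{2}$. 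I expect the main obstacle to be the factor-$l$ slice inequality: one must track the DFT normalization in Parseval's identity carefully, so that the $1/l$ from the norm of the product and the $l^{2}$ from the product of the two slice norms combine to leave a single factor $l$, rather than appealing to a naive sub-multiplicativity of $\|\cdot\|_{F}$ under the t-product, which would give the wrong constant. By contrast, the independence and cross-term step is routine once one observes that the Frobenius norm treats tubal matrices simply as vectors.
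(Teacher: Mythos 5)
Your proposal is correct and follows essentially the same route as the paper's proof: both decompose the sketched product into $\tau$ i.i.d.\ unbiased per-trial estimators, kill the cross terms by independence, extract the factor $l$ from the Fourier-domain bound $\frac{1}{l}\sum_{k}a_{k}b_{k}\leq l\bigl(\frac{1}{l}\sum_{k}a_{k}\bigr)\bigl(\frac{1}{l}\sum_{k}b_{k}\bigr)$, and finish with Cauchy--Schwarz (or direct cancellation) against the assumed probability lower bounds. The only difference is bookkeeping: the paper runs the variance computation entrywise on the scalars $H_{t}$ in the Fourier domain, whereas you keep the argument at the level of tubal matrices $\mathcal{P}_{t}$ and isolate the slice inequality $\|\mathcal{X}_{(:,i,:)}*\mathcal{Y}_{(i,:,:)}\|_{F}^{2}\leq l\,\|\mathcal{X}_{(:,i,:)}\|_{F}^{2}\|\mathcal{Y}_{(i,:,:)}\|_{F}^{2}$ as the explicit source of the factor $l$ --- the underlying estimates are identical.
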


\begin{remark}\rm
	Theorem 3.1 in \cite{tarzanagh2018fast} also provides a similar result. The key distinction from Lemma \ref{RanMutil} is the way of sampling. Specifically, Theorem 3.1 in \cite{tarzanagh2018fast} is for sampling without replacement, whereas Lemma \ref{RanMutil} is for sampling with replacement.
\end{remark}
\begin{lemma}\label{fullrank}
	For a tubal matrix $\mathcal{X}\in\mathbb{K}_{l}^{n\times p}$ with full tubal rank, i.e., $\text{rank}_{t}(\mathcal{X})=p$, let $\mathcal{X}=\mathcal{U}_{\mathcal{X}}*\Sigma_{\mathcal{X}}*\mathcal{V}_{\mathcal{X}}^{T}$ be its thin t-SVD. Further, let $\Gamma=(\mathcal{D}*\mathcal{S}^{T}*\mathcal{U}_{\mathcal{X}})^{\dag}-(\mathcal{D}*\mathcal{S}^{T}*\mathcal{U}_{\mathcal{X}})^{T}$, where $\mathcal{D}*\mathcal{S}^{T}\in \mathbb{K}^{\tau\times n}_{l}$ is the rescaled  random sampling tubal matrix formed as described in Algorithm \ref{Subsample_TLS}. If the sampling probabilities $\{\pi_{i}\}_{i=1}^{n}$ satisfy
	$$\pi_{i}\geq\beta\frac{\|\mathcal{U}_{\mathcal{X}(i,:,:)}\|_{F}^{2}}{\sum_{i=1}^{n}\|\mathcal{U}_{\mathcal{X}(i,:,:)}\|_{F}^{2}}=\frac{\beta}{p}\|\mathcal{U}_{\mathcal{X}(i,:,:)}\|_{F}^{2},\quad \beta\in(0,1],$$
	and $\tau\geq 40p^{2}l^{2}/\beta\epsilon$ with $\epsilon\in(0,1]$, then with probability at least $0.9$, the following results hold:
	\begin{align}
		&\text{rank}_{t}(\mathcal{D}*\mathcal{S}^{T}*\mathcal{U}_{\mathcal{X}})=\text{rank}_{t}(\mathcal{U}_{\mathcal{X}})=\text{rank}_{t}(\mathcal{X})=p; \label{fullrank_1}\\
		&\|\Gamma\|_{2}^2=\|\Sigma_{\mathcal{D}*\mathcal{S}^{T}*\mathcal{U}_{\mathcal{X}}}^{-1}-\Sigma_{\mathcal{D}*\mathcal{S}^{T}*\mathcal{U}_{\mathcal{X}}}\|_{2}^2;\label{fullrank_2}\\
		&(\mathcal{D}*\mathcal{S}^{T}*\mathcal{X})^{\dag}=\mathcal{V}_{\mathcal{X}}*\Sigma_{\mathcal{X}}^{-1}*(\mathcal{D}*\mathcal{S}^{T}*\mathcal{U}_{\mathcal{X}})^{\dag};\label{fullrank_3}\\
		&\|\Sigma_{\mathcal{D}*\mathcal{S}^{T}*\mathcal{U}_{\mathcal{X}}}^{-1}-\Sigma_{\mathcal{D}*\mathcal{S}^{T}*\mathcal{U}_{\mathcal{X}}}\|_{2}^2\leq\frac{\epsilon}{2},\label{fullrank_4}
	\end{align}	
where $\Sigma_{\mathcal{D}*\mathcal{S}^{T}*\mathcal{U}_{\mathcal{X}}}$ is from the thin t-SVD of $\mathcal{D}*\mathcal{S}^{T}*\mathcal{U}_{\mathcal{X}}=\mathcal{U}_{\mathcal{D}*\mathcal{S}^{T}*\mathcal{U}_{\mathcal{X}}}*\Sigma_{\mathcal{D}*\mathcal{S}^{T}*\mathcal{U}_{\mathcal{X}}}*\mathcal{V}_{\mathcal{D}*\mathcal{S}^{T}*\mathcal{U}_{\mathcal{X}}}^{T}$.
\end{lemma}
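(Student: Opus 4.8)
The plan is to reduce all four claims to a single concentration estimate for the sampled Gram tubal matrix of $\mathcal{U}_{\mathcal{X}}$, and then to read everything off the slicewise singular values of $M:=\mathcal{D}*\mathcal{S}^{T}*\mathcal{U}_{\mathcal{X}}$ in the Fourier domain. First I would apply Lemma~\ref{RanMutil} with $\mathcal{U}_{\mathcal{X}}^{T}$ playing the role of $\mathcal{X}$ and $\mathcal{U}_{\mathcal{X}}$ playing the role of $\mathcal{Y}$. Because the Frobenius norm is invariant under the transpose and slice-reversal operations, one checks that $\|(\mathcal{U}_{\mathcal{X}}^{T})_{(:,i,:)}\|_{F}=\|\mathcal{U}_{\mathcal{X}(i,:,:)}\|_{F}$ and $\|\mathcal{U}_{\mathcal{X}}\|_{F}^{2}=p$, so the stated sampling probabilities are exactly the second form required by Lemma~\ref{RanMutil}. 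Since $\mathcal{U}_{\mathcal{X}}^{T}*\mathcal{U}_{\mathcal{X}}=\mathcal{I}_{p}$ and $\mathcal{D}^{T}=\mathcal{D}$, Lemma~\ref{RanMutil} gives
\[
\mathbf{E}\!\left[\|\mathcal{I}_{p}-M^{T}*M\|_{F}^{2}\right]\le\frac{l}{\beta\tau}\|\mathcal{U}_{\mathcal{X}}\|_{F}^{2}\|\mathcal{U}_{\mathcal{X}}\|_{F}^{2}=\frac{lp^{2}}{\beta\tau}.
\]
Using $\tau\ge 40p^{2}l^{2}/\beta\epsilon$ this is at most $\epsilon/(40l)$, so Markov's inequality yields, with probability at least $0.9$, the event
\[
\|\mathcal{I}_{p}-M^{T}*M\|_{F}^{2}\le\frac{\epsilon}{4l},
\]
on which I would work for the remainder of the argument.

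Next I would transfer from the Frobenius norm to the operator norm via the Fourier domain. For any tubal matrix $\mathcal{A}$ one has $\|\mathcal{A}\|_{2}^{2}=\max_{k}\|\widehat{\mathcal{A}}_{(k)}\|_{2}^{2}\le\max_{k}\|\widehat{\mathcal{A}}_{(k)}\|_{F}^{2}\le\sum_{k}\|\widehat{\mathcal{A}}_{(k)}\|_{F}^{2}=l\|\mathcal{A}\|_{F}^{2}$, so the high-probability event forces $\|\mathcal{I}_{p}-M^{T}*M\|_{2}^{2}\le\epsilon/4$. Denoting by $\sigma_{i}^{(k)}$ the singular values of the Fourier slices $\widehat{M}_{(k)}$ (these are precisely the Fourier-domain entries of the diagonal tubes of $\Sigma_{\mathcal{D}*\mathcal{S}^{T}*\mathcal{U}_{\mathcal{X}}}$), the operator norm of $\mathcal{I}_{p}-M^{T}*M$ equals $\max_{k,i}|1-(\sigma_{i}^{(k)})^{2}|$, whence $|1-(\sigma_{i}^{(k)})^{2}|\le\sqrt{\epsilon}/2<1$ for every $i$ and $k$. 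In particular each $\sigma_{i}^{(k)}>0$, so every Fourier slice of $M$ has full column rank $p$; this gives $\text{rank}_{t}(M)=p$, which together with the partial orthogonality of $\mathcal{U}_{\mathcal{X}}$ and the hypothesis on $\mathcal{X}$ proves \eqref{fullrank_1}. Moreover, from $|1/\sigma-\sigma|=|1-\sigma^{2}|/\sigma$ and $\sigma_{i}^{(k)}\ge\sqrt{1-\sqrt{\epsilon}/2}$ I obtain
\[
\|\Sigma_{\mathcal{D}*\mathcal{S}^{T}*\mathcal{U}_{\mathcal{X}}}^{-1}-\Sigma_{\mathcal{D}*\mathcal{S}^{T}*\mathcal{U}_{\mathcal{X}}}\|_{2}^{2}\le\frac{\epsilon/4}{1-\sqrt{\epsilon}/2}\le\frac{\epsilon}{2},
\]
where the last step uses $\epsilon\le1$; this is \eqref{fullrank_4}.

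For \eqref{fullrank_2} and \eqref{fullrank_3} I would invoke the thin t-SVD $M=\mathcal{U}_{M}*\Sigma_{M}*\mathcal{V}_{M}^{T}$, which is available since $M$ has full tubal rank. Then $M^{\dag}=\mathcal{V}_{M}*\Sigma_{M}^{-1}*\mathcal{U}_{M}^{T}$ and $M^{T}=\mathcal{V}_{M}*\Sigma_{M}*\mathcal{U}_{M}^{T}$, so $\Gamma=\mathcal{V}_{M}*(\Sigma_{M}^{-1}-\Sigma_{M})*\mathcal{U}_{M}^{T}$; the partial orthogonality of $\mathcal{U}_{M}$ and $\mathcal{V}_{M}$ leaves the operator norm unchanged, giving $\|\Gamma\|_{2}^{2}=\|\Sigma_{M}^{-1}-\Sigma_{M}\|_{2}^{2}$, i.e.\ \eqref{fullrank_2}. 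For \eqref{fullrank_3} I would factor $\mathcal{D}*\mathcal{S}^{T}*\mathcal{X}=M*(\Sigma_{\mathcal{X}}*\mathcal{V}_{\mathcal{X}}^{T})$; here the right factor is square and invertible while $M$ has full column rank, so the reverse-order law for the Moore--Penrose inverse applies and yields $(\mathcal{D}*\mathcal{S}^{T}*\mathcal{X})^{\dag}=(\Sigma_{\mathcal{X}}*\mathcal{V}_{\mathcal{X}}^{T})^{-1}*M^{\dag}=\mathcal{V}_{\mathcal{X}}*\Sigma_{\mathcal{X}}^{-1}*M^{\dag}$.

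The main obstacle I anticipate is the deterministic norm bookkeeping on the high-probability event: one must justify that the slicewise Fourier singular values of $M$ are exactly the diagonal tubal entries of $\Sigma_{\mathcal{D}*\mathcal{S}^{T}*\mathcal{U}_{\mathcal{X}}}$, and then track the constants so that $(\epsilon/4)/(1-\sqrt{\epsilon}/2)$ genuinely collapses to $\epsilon/2$ for all $\epsilon\in(0,1]$. By contrast, the probabilistic step is routine once Lemma~\ref{RanMutil} is in hand, and the two pseudoinverse identities are standard reverse-order laws, so essentially all the care is concentrated in the spectral estimates.
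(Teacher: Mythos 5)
Your argument for \eqref{fullrank_1} is essentially the paper's own: Lemma~\ref{RanMutil} applied to $\mathcal{U}_{\mathcal{X}}^{T}$ and $\mathcal{U}_{\mathcal{X}}$, Markov's inequality, the conversion $\|\cdot\|_{2}^{2}\leq l\|\cdot\|_{F}^{2}$, and the resulting $\epsilon/4$ perturbation of the squared Fourier-slice singular values, with matching constants. For \eqref{fullrank_2}--\eqref{fullrank_4} the paper merely defers to Lemma 3.5 of the cited reference, and your slicewise t-SVD identities, the bound $(\epsilon/4)/(1-\sqrt{\epsilon}/2)\leq\epsilon/2$, and the reverse-order law for $(\mathcal{D}*\mathcal{S}^{T}*\mathcal{X})^{\dag}$ are a correct filling-in of those omitted details.
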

\begin{remark}\rm
	Lemma 3.5 in \cite{tarzanagh2018fast} also provides similar results and its key distinction from Lemma \ref{fullrank} is also the way of sampling.
\end{remark}
\begin{lemma}\label{multicom}
	With the same setting as Lemma \ref{fullrank} except that 
	$\tau\geq40pl/\beta\epsilon$, we have, with probability at least $0.9$,
	$$\|\mathcal{U}_{\mathcal{X}}^{T}*\mathcal{S}*\mathcal{D}*\mathcal{D}*\mathcal{S}^{T}*\mathcal{U}_{\mathcal{X}}^{\perp}*{\mathcal{U}_{\mathcal{X}}^{\perp}}^{T}*\overrightarrow{\mathcal{Y}}\|_{F}^2	\leq\frac{\epsilon}{4}\|\mathcal{U}_{\mathcal{X}}^{\perp}*{\mathcal{U}_{\mathcal{X}}^{\perp}}^{T}*\overrightarrow{\mathcal{Y}}\|_{F}^2,$$
	where $\mathcal{U}_{\mathcal{X}}^{\perp}$ is the orthogonal complement of $\mathcal{U}_{\mathcal{X}}$.
\end{lemma}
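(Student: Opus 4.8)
The plan is to recognize the target quantity as the error of an approximate (subsampled) t-product whose \emph{exact} value is zero, and then to invoke the approximate multiplication bound of Lemma~\ref{RanMutil} followed by Markov's inequality. Write $\overrightarrow{\mathcal{Z}}=\mathcal{U}_{\mathcal{X}}^{\perp}*{\mathcal{U}_{\mathcal{X}}^{\perp}}^{T}*\overrightarrow{\mathcal{Y}}$ for the part of $\overrightarrow{\mathcal{Y}}$ lying in the orthogonal complement of the range of $\mathcal{U}_{\mathcal{X}}$. The crucial observation is that the exact product vanishes,
\begin{align*}
\mathcal{U}_{\mathcal{X}}^{T}*\overrightarrow{\mathcal{Z}}=\mathcal{U}_{\mathcal{X}}^{T}*\mathcal{U}_{\mathcal{X}}^{\perp}*{\mathcal{U}_{\mathcal{X}}^{\perp}}^{T}*\overrightarrow{\mathcal{Y}}=\overrightarrow{\mathbf{0}},
\end{align*}
since $\mathcal{U}_{\mathcal{X}}$ and $\mathcal{U}_{\mathcal{X}}^{\perp}$ are orthogonal complements. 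Hence the quantity to be bounded, $\|\mathcal{U}_{\mathcal{X}}^{T}*\mathcal{S}*\mathcal{D}*\mathcal{D}*\mathcal{S}^{T}*\overrightarrow{\mathcal{Z}}\|_{F}^{2}$, is precisely the squared error between $\mathcal{U}_{\mathcal{X}}^{T}*\overrightarrow{\mathcal{Z}}$ and its subsampled counterpart, which is exactly the object controlled by Lemma~\ref{RanMutil}.

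Next I would apply Lemma~\ref{RanMutil} with the fixed factors $\mathcal{U}_{\mathcal{X}}^{T}\in\mathbb{K}_{l}^{p\times n}$ and $\overrightarrow{\mathcal{Z}}\in\mathbb{K}_{l}^{n}$, which requires verifying that the leverage-type probabilities inherited from Lemma~\ref{fullrank} meet the hypothesis of Lemma~\ref{RanMutil}. Using its second sampling condition one needs $\pi_{i}\geq\beta\|(\mathcal{U}_{\mathcal{X}}^{T})_{(:,i,:)}\|_{F}^{2}/\|\mathcal{U}_{\mathcal{X}}^{T}\|_{F}^{2}$. Because transposition only permutes and transposes frontal slices, one has $\|(\mathcal{U}_{\mathcal{X}}^{T})_{(:,i,:)}\|_{F}=\|\mathcal{U}_{\mathcal{X}(i,:,:)}\|_{F}$; moreover, by Parseval's identity for the DFT along the third mode together with the partial orthogonality $\mathcal{U}_{\mathcal{X}}^{T}*\mathcal{U}_{\mathcal{X}}=\mathcal{I}_{p}$ (each Fourier frontal slice $\widehat{\mathcal{U}}_{(k)}$ has orthonormal columns), one gets $\|\mathcal{U}_{\mathcal{X}}\|_{F}^{2}=p$. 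Thus the required condition coincides with the assumed $\pi_{i}\geq(\beta/p)\|\mathcal{U}_{\mathcal{X}(i,:,:)}\|_{F}^{2}$ of Lemma~\ref{fullrank}, so Lemma~\ref{RanMutil} yields
\begin{align*}
\mathbf{E}\big[\|\mathcal{U}_{\mathcal{X}}^{T}*\mathcal{S}*\mathcal{D}*\mathcal{D}*\mathcal{S}^{T}*\overrightarrow{\mathcal{Z}}\|_{F}^{2}\big]\leq\frac{l}{\beta\tau}\|\mathcal{U}_{\mathcal{X}}^{T}\|_{F}^{2}\|\overrightarrow{\mathcal{Z}}\|_{F}^{2}=\frac{lp}{\beta\tau}\|\overrightarrow{\mathcal{Z}}\|_{F}^{2}.
\end{align*}

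Finally, substituting $\tau\geq 40pl/\beta\epsilon$ gives $lp/(\beta\tau)\leq\epsilon/40$, so the expectation is at most $(\epsilon/40)\|\overrightarrow{\mathcal{Z}}\|_{F}^{2}$. Applying Markov's inequality to the nonnegative random variable $\|\mathcal{U}_{\mathcal{X}}^{T}*\mathcal{S}*\mathcal{D}*\mathcal{D}*\mathcal{S}^{T}*\overrightarrow{\mathcal{Z}}\|_{F}^{2}$ at threshold $(\epsilon/4)\|\overrightarrow{\mathcal{Z}}\|_{F}^{2}$ bounds the failure probability by $(\epsilon/40)/(\epsilon/4)=1/10$, which delivers the claimed inequality with probability at least $0.9$. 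The main obstacle here is conceptual rather than computational: identifying the vanishing exact product so that the approximate-multiplication lemma applies to a zero target, and carrying out the bookkeeping that confirms the leverage distribution of Lemma~\ref{fullrank} is admissible for the specific pair $(\mathcal{U}_{\mathcal{X}}^{T},\overrightarrow{\mathcal{Z}})$; some care is also needed with the transpose and Parseval identities $\|(\mathcal{U}_{\mathcal{X}}^{T})_{(:,i,:)}\|_{F}=\|\mathcal{U}_{\mathcal{X}(i,:,:)}\|_{F}$ and $\|\mathcal{U}_{\mathcal{X}}\|_{F}^{2}=p$.
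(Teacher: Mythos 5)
Your proposal is correct and follows essentially the same route as the paper: recognize that the exact product with $\overrightarrow{\mathcal{Z}}=\mathcal{U}_{\mathcal{X}}^{\perp}*{\mathcal{U}_{\mathcal{X}}^{\perp}}^{T}*\overrightarrow{\mathcal{Y}}$ vanishes, apply Lemma~\ref{RanMutil} (using $\|\mathcal{U}_{\mathcal{X}}\|_{F}^{2}=p$ so the assumed leverage-type probabilities are admissible) to get the expectation bound $\frac{lp}{\beta\tau}\|\overrightarrow{\mathcal{Z}}\|_{F}^{2}$, and finish with Markov's inequality and $\tau\geq 40pl/\beta\epsilon$. The only cosmetic difference is that the paper first inserts a factor $\mathcal{U}_{\mathcal{X}}$ via unitary invariance of the Frobenius norm before invoking the lemma, whereas you apply it directly to the pair $(\mathcal{U}_{\mathcal{X}}^{T},\overrightarrow{\mathcal{Z}})$; the constants and conclusion are identical.
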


\begin{lemma}\label{multicomp}
	With the same setting as Lemma \ref{fullrank} except that there is no limit on $\tau$, we have, with probability at least $0.9$:
	$$\|\mathcal{D}*\mathcal{S}^{T}*\mathcal{U}_{\mathcal{X}}^{\perp}*{\mathcal{U}_{\mathcal{X}}^{\perp}}^{T}*\overrightarrow{\mathcal{Y}}\|_{F}^2\leq10\|\mathcal{U}_{\mathcal{X}}^{\perp}*{\mathcal{U}_{\mathcal{X}}^{\perp}}^{T}*\overrightarrow{\mathcal{Y}}\|_{F}^2.$$
\end{lemma}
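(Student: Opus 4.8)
The plan is to control the left-hand side in expectation and then upgrade to a high-probability statement via Markov's inequality; the appearance of the constants $10$ and $0.9=1-1/10$ in the statement is a strong hint that Markov is the intended mechanism, and the fact that no lower bound on $\tau$ is assumed tells me that no concentration/net argument is needed here. First I would abbreviate $\overrightarrow{\mathcal{Z}}:=\mathcal{U}_{\mathcal{X}}^{\perp}*{\mathcal{U}_{\mathcal{X}}^{\perp}}^{T}*\overrightarrow{\mathcal{Y}}\in\mathbb{K}_{l}^{n}$, a \emph{fixed} tubal vector that does not depend on the random draw of $\mathcal{S}$ and $\mathcal{D}$. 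The target inequality then reads simply $\|\mathcal{D}*\mathcal{S}^{T}*\overrightarrow{\mathcal{Z}}\|_{F}^{2}\le 10\|\overrightarrow{\mathcal{Z}}\|_{F}^{2}$, so the orthogonal-complement structure plays no further role beyond defining $\overrightarrow{\mathcal{Z}}$.

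Next I would compute $\mathbf{E}[\|\mathcal{D}*\mathcal{S}^{T}*\overrightarrow{\mathcal{Z}}\|_{F}^{2}]$ exactly. Using the adjoint property of the t-product together with $\mathcal{D}^{T}=\mathcal{D}$, one rewrites $\|\mathcal{D}*\mathcal{S}^{T}*\overrightarrow{\mathcal{Z}}\|_{F}^{2}=\langle\overrightarrow{\mathcal{Z}},\mathcal{S}*\mathcal{D}^{2}*\mathcal{S}^{T}*\overrightarrow{\mathcal{Z}}\rangle=\langle\overrightarrow{\mathcal{Z}},\mathcal{W}*\overrightarrow{\mathcal{Z}}\rangle$, where $\mathcal{W}=\mathcal{S}*\mathcal{D}^{2}*\mathcal{S}^{T}$ as in \eqref{sol_WTLS}. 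By linearity of expectation it then suffices to evaluate $\mathbf{E}[\mathcal{W}]$. Writing $\mathcal{W}$ as the sum over the $\tau$ i.i.d. trials of the rank-one terms $\tfrac{1}{\tau\pi_{i_{t}}}$ placed at position $(i_{t},i_{t},1)$ and summing over the distribution $\{\pi_{i}\}$, a direct computation gives $\mathbf{E}[\mathcal{W}]=\mathcal{I}_{n}$ (or, should some sampling probabilities vanish, a diagonal projection dominated by $\mathcal{I}_{n}$, which only improves the bound). Hence $\mathbf{E}[\|\mathcal{D}*\mathcal{S}^{T}*\overrightarrow{\mathcal{Z}}\|_{F}^{2}]=\langle\overrightarrow{\mathcal{Z}},\overrightarrow{\mathcal{Z}}\rangle=\|\overrightarrow{\mathcal{Z}}\|_{F}^{2}$.

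Finally, I would apply Markov's inequality to the nonnegative random variable $\|\mathcal{D}*\mathcal{S}^{T}*\overrightarrow{\mathcal{Z}}\|_{F}^{2}$ at threshold $10\,\mathbf{E}[\cdot]$, giving
\[
\mathbf{P}\left(\|\mathcal{D}*\mathcal{S}^{T}*\overrightarrow{\mathcal{Z}}\|_{F}^{2}\ge 10\|\overrightarrow{\mathcal{Z}}\|_{F}^{2}\right)\le\frac{1}{10},
\]
so the claimed bound holds with probability at least $0.9$, which is exactly the assertion. The only places requiring genuine care will be the exact evaluation of $\mathbf{E}[\mathcal{W}]$ in the tubal-matrix setting and the justification of the adjoint identity $\langle\mathcal{A}*\overrightarrow{\mathcal{Z}},\mathcal{A}*\overrightarrow{\mathcal{Z}}\rangle=\langle\overrightarrow{\mathcal{Z}},\mathcal{A}^{T}*\mathcal{A}*\overrightarrow{\mathcal{Z}}\rangle$ with $\mathcal{A}=\mathcal{D}*\mathcal{S}^{T}$; both are routine but should be stated cleanly, and I would note that, unlike Lemma \ref{multicom}, this estimate needs no constraint on $\tau$, consistent with the hypotheses.
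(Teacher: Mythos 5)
Your proposal is correct and follows essentially the same route as the paper: both compute $\mathbf{E}\bigl[\|\mathcal{D}*\mathcal{S}^{T}*\overrightarrow{\mathcal{Z}}\|_{F}^{2}\bigr]=\|\overrightarrow{\mathcal{Z}}\|_{F}^{2}$ exactly and then apply Markov's inequality at threshold $10$. The paper obtains the expectation by writing $\|\mathcal{D}*\mathcal{S}^{T}*\overrightarrow{\mathcal{Z}}\|_{F}^{2}=\sum_{t=1}^{\tau}\|\overrightarrow{\mathcal{Z}}_{(i_{t},:,:)}\|_{F}^{2}/(\tau\pi_{i_{t}})$ and averaging term by term, whereas you pass through $\mathbf{E}[\mathcal{W}]=\mathcal{I}_{n}$ and the adjoint identity, which is the same computation in a slightly different packaging.
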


We now establish the main results.
\begin{theorem}\label{rel_opt}
	Suppose that $\mathcal{X}\in\mathbb{K}_{l}^{n\times p}$ has full tubal rank, i.e., $\text{rank}_{t}(\mathcal{X})=p$, $\overrightarrow{\mathcal{Y}}\in\mathbb{K}_{l}^{n}$, and 
$\widetilde{\overrightarrow{\mathcal{B}}}_{\mathcal{W}}$ is the output calculated by the Subsampling TLS method, i.e., Algorithm \ref{Subsample_TLS}, with sampling probabilities $\{\pi_{i}\}_{i=1}^{n}$  satisfying
	$$\pi_{i}\geq\beta\frac{\|\mathcal{U}_{\mathcal{X}(i,:,:)}\|_{F}^{2}}{\sum_{i=1}^{n}\|\mathcal{U}_{\mathcal{X}(i,:,:)}\|_{F}^{2}}=\frac{\beta}{p}\|\mathcal{U}_{\mathcal{X}(i,:,:)}\|_{F}^{2}, \quad \beta\in(0,1],$$ and the sample size $\tau$ satisfying  $\tau\geq440p^{2}l^{2}/\beta\epsilon$ with $\epsilon\in(0,1]$. Then with probability at least $0.7$:
	\begin{align}
		&f(\widetilde{\overrightarrow{\mathcal{B}}}_{\mathcal{W}})\leq(1+\epsilon)f(\widetilde{\overrightarrow{\mathcal{B}}}_{ols});\label{rele_func}\\
	&	\|\widetilde{\overrightarrow{\mathcal{B}}}_{ols}-\widetilde{\overrightarrow{\mathcal{B}}}_{\mathcal{W}}\|_{F}^2\leq\frac{\epsilon}{\sigma_{\min}^2(\text{\rm bcirc}(\mathcal{X}))}f(\widetilde{\overrightarrow{\mathcal{B}}}_{ols}).\label{rele_beta_func}
	\end{align}	
	If, in addition,  $\|\mathcal{U}_{\mathcal{X}}*\mathcal{U}_{\mathcal{X}}^{T}*\overrightarrow{\mathcal{Y}}\|_{F}^2\geq\gamma\|\overrightarrow{\mathcal{Y}}\|_{F}^2$ for some fixed $\gamma\in(0,1]$, then with probability at least $0.7$:
	\begin{align*}
		\|\widetilde{\overrightarrow{\mathcal{B}}}_{ols}-\widetilde{\overrightarrow{\mathcal{B}}}_{\mathcal{W}}\|_{F}^2\leq\epsilon\kappa^2(\text{\rm bcirc}(\mathcal{X}))(\gamma^{-1}-1)\|\widetilde{\overrightarrow{\mathcal{B}}}_{ols}\|_{F}^{2},
	\end{align*}
 where $\kappa(\text{\rm bcirc}(\mathcal{X}))$ is the condition number of $\text{\rm bcirc}(\mathcal{X})$.
\end{theorem}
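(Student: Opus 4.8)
The plan is to reduce every assertion to controlling a single quantity: the coefficient error in the range of $\mathcal{U}_{\mathcal{X}}$. Write the thin t-SVD $\mathcal{X}=\mathcal{U}_{\mathcal{X}}*\Sigma_{\mathcal{X}}*\mathcal{V}_{\mathcal{X}}^{T}$, abbreviate $\mathcal{M}=\mathcal{D}*\mathcal{S}^{T}$, and let $\overrightarrow{\mathcal{R}}=\mathcal{U}_{\mathcal{X}}^{\perp}*{\mathcal{U}_{\mathcal{X}}^{\perp}}^{T}*\overrightarrow{\mathcal{Y}}$ denote the optimal residual, so that $f(\widetilde{\overrightarrow{\mathcal{B}}}_{ols})=\|\overrightarrow{\mathcal{R}}\|_{F}^{2}$ and $\overrightarrow{\mathcal{Y}}=\mathcal{U}_{\mathcal{X}}*\mathcal{U}_{\mathcal{X}}^{T}*\overrightarrow{\mathcal{Y}}+\overrightarrow{\mathcal{R}}$. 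Set $\widehat{\overrightarrow{\mathcal{C}}}=(\mathcal{M}*\mathcal{U}_{\mathcal{X}})^{\dag}*\mathcal{M}*\overrightarrow{\mathcal{Y}}$, which is well-defined by the full-rank fact \eqref{fullrank_1}. First I would use \eqref{fullrank_3} to show $\mathcal{X}*\widetilde{\overrightarrow{\mathcal{B}}}_{\mathcal{W}}=\mathcal{U}_{\mathcal{X}}*\widehat{\overrightarrow{\mathcal{C}}}$, so that, together with $\mathcal{X}*\widetilde{\overrightarrow{\mathcal{B}}}_{ols}=\mathcal{U}_{\mathcal{X}}*\mathcal{U}_{\mathcal{X}}^{T}*\overrightarrow{\mathcal{Y}}$, the error $\overrightarrow{\mathcal{Y}}-\mathcal{X}*\widetilde{\overrightarrow{\mathcal{B}}}_{\mathcal{W}}$ splits orthogonally into $\overrightarrow{\mathcal{R}}$ (in the range of $\mathcal{U}_{\mathcal{X}}^{\perp}$) plus $\mathcal{U}_{\mathcal{X}}*(\mathcal{U}_{\mathcal{X}}^{T}*\overrightarrow{\mathcal{Y}}-\widehat{\overrightarrow{\mathcal{C}}})$ (in the range of $\mathcal{U}_{\mathcal{X}}$). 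Pythagoras, combined with the partial orthogonality $\mathcal{U}_{\mathcal{X}}^{T}*\mathcal{U}_{\mathcal{X}}=\mathcal{I}_{p}$, then yields $f(\widetilde{\overrightarrow{\mathcal{B}}}_{\mathcal{W}})=f(\widetilde{\overrightarrow{\mathcal{B}}}_{ols})+\|\mathcal{U}_{\mathcal{X}}^{T}*\overrightarrow{\mathcal{Y}}-\widehat{\overrightarrow{\mathcal{C}}}\|_{F}^{2}$, reducing \eqref{rele_func} to bounding the coefficient error by $\epsilon\,\|\overrightarrow{\mathcal{R}}\|_{F}^{2}$.

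Next I would make the coefficient error explicit. Using $(\mathcal{M}*\mathcal{U}_{\mathcal{X}})^{\dag}*\mathcal{M}*\mathcal{U}_{\mathcal{X}}=\mathcal{I}_{p}$ (again from \eqref{fullrank_1}) and the splitting of $\overrightarrow{\mathcal{Y}}$ above, one finds $\mathcal{U}_{\mathcal{X}}^{T}*\overrightarrow{\mathcal{Y}}-\widehat{\overrightarrow{\mathcal{C}}}=-(\mathcal{M}*\mathcal{U}_{\mathcal{X}})^{\dag}*\mathcal{M}*\overrightarrow{\mathcal{R}}$. Writing $(\mathcal{M}*\mathcal{U}_{\mathcal{X}})^{\dag}=(\mathcal{M}*\mathcal{U}_{\mathcal{X}})^{T}+\Gamma$ with the $\Gamma$ of Lemma \ref{fullrank}, the triangle inequality and submultiplicativity $\|\mathcal{A}*\mathcal{B}\|_{F}\le\|\mathcal{A}\|_{2}\|\mathcal{B}\|_{F}$ bound the error by $\|\mathcal{U}_{\mathcal{X}}^{T}*\mathcal{M}^{T}*\mathcal{M}*\overrightarrow{\mathcal{R}}\|_{F}+\|\Gamma\|_{2}\|\mathcal{M}*\overrightarrow{\mathcal{R}}\|_{F}$. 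Since $\mathcal{M}^{T}*\mathcal{M}=\mathcal{S}*\mathcal{D}*\mathcal{D}*\mathcal{S}^{T}$, the first term is exactly the quantity controlled by Lemma \ref{multicom}; the factor $\|\Gamma\|_{2}^{2}$ is controlled by \eqref{fullrank_4}; and $\|\mathcal{M}*\overrightarrow{\mathcal{R}}\|_{F}^{2}$ is controlled by Lemma \ref{multicomp}. Applying these three lemmas with $\epsilon$ replaced by a fixed fraction of $\epsilon$ (the constant $440$ in the hypothesis on $\tau$ is calibrated precisely so that the rescaled requirement from the most demanding lemma, Lemma \ref{fullrank}, is met) and combining via $(a+b)^{2}\le 2a^{2}+2b^{2}$ together with the factor-$10$ blow-up of Lemma \ref{multicomp} gives the coefficient error $\le\epsilon\,\|\overrightarrow{\mathcal{R}}\|_{F}^{2}$. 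As Lemmas \ref{fullrank}, \ref{multicom}, and \ref{multicomp} each hold with probability at least $0.9$, a union bound over the three failure events yields probability at least $1-3(0.1)=0.7$, establishing \eqref{rele_func}.

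For the solution bound \eqref{rele_beta_func}, I would note $\mathcal{X}*(\widetilde{\overrightarrow{\mathcal{B}}}_{ols}-\widetilde{\overrightarrow{\mathcal{B}}}_{\mathcal{W}})=\mathcal{U}_{\mathcal{X}}*(\mathcal{U}_{\mathcal{X}}^{T}*\overrightarrow{\mathcal{Y}}-\widehat{\overrightarrow{\mathcal{C}}})$, so the already-bounded coefficient error reappears. The block-circulant correspondence $\|\mathcal{X}*\overrightarrow{\mathcal{V}}\|_{F}\ge\sigma_{\min}(\text{bcirc}(\mathcal{X}))\|\overrightarrow{\mathcal{V}}\|_{F}$, together with the isometry of $\mathcal{U}_{\mathcal{X}}$, then gives $\sigma_{\min}^{2}(\text{bcirc}(\mathcal{X}))\,\|\widetilde{\overrightarrow{\mathcal{B}}}_{ols}-\widetilde{\overrightarrow{\mathcal{B}}}_{\mathcal{W}}\|_{F}^{2}\le\|\mathcal{U}_{\mathcal{X}}^{T}*\overrightarrow{\mathcal{Y}}-\widehat{\overrightarrow{\mathcal{C}}}\|_{F}^{2}\le\epsilon\, f(\widetilde{\overrightarrow{\mathcal{B}}}_{ols})$, which is \eqref{rele_beta_func}.

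Finally, the conditional bound under $\|\mathcal{U}_{\mathcal{X}}*\mathcal{U}_{\mathcal{X}}^{T}*\overrightarrow{\mathcal{Y}}\|_{F}^{2}\ge\gamma\|\overrightarrow{\mathcal{Y}}\|_{F}^{2}$ follows by feeding two elementary norm estimates into \eqref{rele_beta_func}. On one hand, $f(\widetilde{\overrightarrow{\mathcal{B}}}_{ols})=\|\overrightarrow{\mathcal{Y}}\|_{F}^{2}-\|\mathcal{U}_{\mathcal{X}}*\mathcal{U}_{\mathcal{X}}^{T}*\overrightarrow{\mathcal{Y}}\|_{F}^{2}\le(1-\gamma)\|\overrightarrow{\mathcal{Y}}\|_{F}^{2}$; on the other hand, from $\mathcal{X}*\widetilde{\overrightarrow{\mathcal{B}}}_{ols}=\mathcal{U}_{\mathcal{X}}*\mathcal{U}_{\mathcal{X}}^{T}*\overrightarrow{\mathcal{Y}}$ and $\|\mathcal{X}*\widetilde{\overrightarrow{\mathcal{B}}}_{ols}\|_{F}\le\sigma_{\max}(\text{bcirc}(\mathcal{X}))\|\widetilde{\overrightarrow{\mathcal{B}}}_{ols}\|_{F}$ one gets $\|\overrightarrow{\mathcal{Y}}\|_{F}^{2}\le\sigma_{\max}^{2}(\text{bcirc}(\mathcal{X}))\gamma^{-1}\|\widetilde{\overrightarrow{\mathcal{B}}}_{ols}\|_{F}^{2}$. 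Substituting both into \eqref{rele_beta_func} and recognizing $\sigma_{\max}^{2}/\sigma_{\min}^{2}=\kappa^{2}(\text{bcirc}(\mathcal{X}))$ and $(1-\gamma)/\gamma=\gamma^{-1}-1$ delivers the claim. I expect the main obstacle to be the careful $\epsilon$-bookkeeping of the second paragraph—tracking the rescaling of the lemmas' $\epsilon$, the factor-$10$ blow-up, and the $2a^{2}+2b^{2}$ split so that the constants land exactly at $(1+\epsilon)$ and probability $0.7$—together with verifying the submultiplicativity and the block-circulant singular-value sandwich $\sigma_{\min}(\text{bcirc}(\mathcal{X}))\|\overrightarrow{\mathcal{V}}\|_{F}\le\|\mathcal{X}*\overrightarrow{\mathcal{V}}\|_{F}\le\sigma_{\max}(\text{bcirc}(\mathcal{X}))\|\overrightarrow{\mathcal{V}}\|_{F}$ in the t-product setting.
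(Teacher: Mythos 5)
Your proposal is correct and follows essentially the same route as the paper's proof: the same orthogonal decomposition of the residual into the range of $\mathcal{U}_{\mathcal{X}}^{\perp}$ plus a component $\mathcal{U}_{\mathcal{X}}*(\mathcal{M}*\mathcal{U}_{\mathcal{X}})^{\dag}*\mathcal{M}*\overrightarrow{\mathcal{R}}$, the same $\Gamma$-splitting of $(\mathcal{M}*\mathcal{U}_{\mathcal{X}})^{\dag}$, the same three lemmas combined by a union bound, and the same $\epsilon'=\epsilon/11$ calibration explaining the constant $440$. The only (harmless) difference is organizational: you isolate the coefficient error as a single quantity reused in both \eqref{rele_func} and \eqref{rele_beta_func}, whereas the paper recomputes the analogous bound for the solution error directly via $\mathcal{V}_{\mathcal{X}}*\Sigma_{\mathcal{X}}^{-1}$.
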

  \begin{remark}\label{re_opt}
  	Let $$\mu(\mathcal{U}_{\mathcal{X}})=\frac{nl}{p}\max_{i\in[n]}\|{\mathcal{U}_{\mathcal{X}}}_{(i,:,:)}\|_{F}^{2}$$ be the $\mu$ coherence of $\mathcal{U}_{\mathcal{X}}$, then for all $i\in[n]$,
  	\begin{align*}
    \pi_{i}^{Unif}&=\frac{1}{n}=\frac{l}{\mu(\mathcal{U})p}\max_{i\in[n]}\|\mathcal{U}_{(i,:,:)}\|_{F}^{2}\geq\frac{l}{\mu(\mathcal{U})}\frac{\|\mathcal{U}_{(i,:,:)}\|_{F}^{2}}{p},\\
  	\pi_{i}^{Lev}&=\frac{\|\mathcal{U}_{(i,:,:)}\|_{F}^{2}}{p}.
  	\end{align*}
Thus, both probability distributions satisfy the assumption in Theorem \ref{rel_opt}, and $$\beta^{Unif}\left(=\frac{l}{\mu(\mathcal{U})}\right)<\beta^{Lev}\left(=1\right).$$
Further, considering that the sample size $\tau$ is inversely proportional to $\beta$, we have $\tau^{Unif}>\tau^{Lev}$,  which implies that to achieve the same relative error, the STLS-Unif method requires drawing a larger number of samples than the STLS-Lev method. 
  \end{remark}

\section{Statistical perspective}\label{sec_statistical}

Under the assumption
that $\mathcal{D}*\mathcal{S}^{T}*\mathcal{X}$ is of full tubal rank, we first rewrite $\widetilde{\overrightarrow{\mathcal{B}}}_{\mathcal{W}}$ in (\ref{sol_WTLS}).  Note that $\mathcal{W}$ in (\ref{sol_WTLS}) is an f-diagonal tubal matrix. Then it can be represented as
$$\mathcal{W}=\begin{bmatrix}
	\mathbf{w}_1 & \mathbf{0}	&\cdots & \mathbf{0}\\
	\mathbf{0} & \mathbf{w}_2 &\cdots & \mathbf{0} \\
	\vdots	&\vdots &\ddots &\vdots  \\
	\mathbf{0} & \mathbf{0}&\cdots  &	\mathbf{w}_n \\
\end{bmatrix},$$
where $\mathbf{w}_i$ for $i=1,\cdots,n$ are its diagonal tubal scalars. Let the random weight tubal vector $\overrightarrow{\mathcal{W}} = (\mathbf{w}_1 , \mathbf{w}_2 , \cdots , \mathbf{w}_n )^{ST}$. Thus, $\mathcal{W}=\text{diag}_{t}(\overrightarrow{\mathcal{W}})$, which implies
\begin{align*}
	\widetilde{\overrightarrow{\mathcal{B}}}_{\mathcal{W}}=(\mathcal{X}^{T}*\mathcal{W}*\mathcal{X})^{-1}*\mathcal{X}^{T}*\mathcal{W}*\overrightarrow{\mathcal{Y}}=(\mathcal{X}^{T}*\text{diag}_{t}(\overrightarrow{\mathcal{W}})*\mathcal{X})^{-1}*\mathcal{X}^{T}*\text{diag}_{t}(\overrightarrow{\mathcal{W}})*\overrightarrow{\mathcal{Y}}.
\end{align*}	
Therefore, $	\widetilde{\overrightarrow{\mathcal{B}}}_{\mathcal{W}}$ can be regarded as a function of the random weight tubal vector  $\overrightarrow{\mathcal{W}} $, denoted as 	$\widetilde{\overrightarrow{\mathcal{B}}}_{\mathcal{W}}(\overrightarrow{\mathcal{W}})$.
Now, we give a linear approximation of $\widetilde{\overrightarrow{\mathcal{B}}}_{\mathcal{W}}(\overrightarrow{\mathcal{W}})$ by Taylor series expansion. 
\begin{lemma}\label{taylor_expan}
	Let $\widetilde{\overrightarrow{\mathcal{B}}}_{\mathcal{W}}$ be the output calculated by the Subsampling TLS method, i.e., Algorithm \ref{Subsample_TLS}, and $\overrightarrow{\mathcal{W}}_{0}=\overrightarrow{\mathbf{1}}$ be a tubal vector whose first frontal slice is all ones, and other frontal slices are all zeros. Then, a Taylor expansion of $\widetilde{\overrightarrow{\mathcal{B}}}_{\mathcal{W}}$ around the tubal vector $\overrightarrow{\mathcal{W}}_{0}=\overrightarrow{\mathbf{1}}$ is as follows:
	\begin{align*}
		\widetilde{\overrightarrow{\mathcal{B}}}_{\mathcal{W}}(\overrightarrow{\mathcal{W}})&=\widetilde{\overrightarrow{\mathcal{B}}}_{\mathcal{W}}(\overrightarrow{\mathbf{1}})+(\mathcal{X}^{T}*\mathcal{X})^{-1}*\mathcal{X}^{T}*\text{diag}_{t}(\widetilde{\overrightarrow{\mathcal{E}}})*(\overrightarrow{\mathcal{W}}-\overrightarrow{\mathbf{1}})+\mathcal{R}_{\mathcal{W}}\\
		&=\widetilde{\overrightarrow{\mathcal{B}}}_{\text{ols}}+(\mathcal{X}^{T}*\mathcal{X})^{-1}*\mathcal{X}^{T}*\text{diag}_{t}(\widetilde{\overrightarrow{\mathcal{E}}})*(\overrightarrow{\mathcal{W}}-\overrightarrow{\mathbf{1}})+\mathcal{R}_{\mathcal{W}},
	\end{align*}	
	where $\widetilde{\overrightarrow{\mathcal{E}}}=\overrightarrow{\mathcal{Y}}-\mathcal{X}*\widetilde{\overrightarrow{\mathcal{B}}}_{ols}$  is the TLS residual tubal vector and $\mathcal{R}_{\mathcal{W}}$ is the Taylor expansion remainder.
\end{lemma}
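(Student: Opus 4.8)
The plan is to read off the expansion directly from the first-order differential of the map $\overrightarrow{\mathcal{W}}\mapsto\widetilde{\overrightarrow{\mathcal{B}}}_{\mathcal{W}}(\overrightarrow{\mathcal{W}})$ at the base point $\overrightarrow{\mathcal{W}}_{0}=\overrightarrow{\mathbf{1}}$, and then to recognize the Jacobian contribution as the stated product. First I would record the two facts at the base point. Since $\text{diag}_{t}(\overrightarrow{\mathbf{1}})=\mathcal{I}_{n}$, substituting $\overrightarrow{\mathcal{W}}=\overrightarrow{\mathbf{1}}$ into $\widetilde{\overrightarrow{\mathcal{B}}}_{\mathcal{W}}=(\mathcal{X}^{T}*\text{diag}_{t}(\overrightarrow{\mathcal{W}})*\mathcal{X})^{-1}*\mathcal{X}^{T}*\text{diag}_{t}(\overrightarrow{\mathcal{W}})*\overrightarrow{\mathcal{Y}}$ collapses the weight to the identity and yields the zeroth-order term $\widetilde{\overrightarrow{\mathcal{B}}}_{\mathcal{W}}(\overrightarrow{\mathbf{1}})=(\mathcal{X}^{T}*\mathcal{X})^{-1}*\mathcal{X}^{T}*\overrightarrow{\mathcal{Y}}=\widetilde{\overrightarrow{\mathcal{B}}}_{ols}$; this simultaneously identifies $\overrightarrow{\mathcal{Y}}-\mathcal{X}*\widetilde{\overrightarrow{\mathcal{B}}}_{ols}$ with the residual $\widetilde{\overrightarrow{\mathcal{E}}}$.

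Next I would compute the differential. Writing $\mathcal{F}(\overrightarrow{\mathcal{W}})=\mathcal{X}^{T}*\text{diag}_{t}(\overrightarrow{\mathcal{W}})*\mathcal{X}$ and $\overrightarrow{\mathcal{G}}(\overrightarrow{\mathcal{W}})=\mathcal{X}^{T}*\text{diag}_{t}(\overrightarrow{\mathcal{W}})*\overrightarrow{\mathcal{Y}}$, so that $\widetilde{\overrightarrow{\mathcal{B}}}_{\mathcal{W}}=\mathcal{F}^{-1}*\overrightarrow{\mathcal{G}}$, I would apply the product rule (which preserves the t-product order, since the t-product of general tubal matrices does not commute) together with the inverse-derivative identity of the preceding lemma, whose vectorized forms give $d(\mathcal{F}^{-1})=-\mathcal{F}^{-1}*(d\mathcal{F})*\mathcal{F}^{-1}$. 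Because $\text{diag}_{t}$ is linear in $\overrightarrow{\mathcal{W}}$, one has $d\mathcal{F}=\mathcal{X}^{T}*\text{diag}_{t}(d\overrightarrow{\mathcal{W}})*\mathcal{X}$ and $d\overrightarrow{\mathcal{G}}=\mathcal{X}^{T}*\text{diag}_{t}(d\overrightarrow{\mathcal{W}})*\overrightarrow{\mathcal{Y}}$. Substituting $\mathcal{F}^{-1}*\overrightarrow{\mathcal{G}}=\widetilde{\overrightarrow{\mathcal{B}}}_{\mathcal{W}}$ into the term $d(\mathcal{F}^{-1})*\overrightarrow{\mathcal{G}}$ and factoring the common left factor $\mathcal{F}^{-1}*\mathcal{X}^{T}*\text{diag}_{t}(d\overrightarrow{\mathcal{W}})$ out of both contributions, I expect the two terms to merge into
$$d\widetilde{\overrightarrow{\mathcal{B}}}_{\mathcal{W}}=\mathcal{F}^{-1}*\mathcal{X}^{T}*\text{diag}_{t}(d\overrightarrow{\mathcal{W}})*(\overrightarrow{\mathcal{Y}}-\mathcal{X}*\widetilde{\overrightarrow{\mathcal{B}}}_{\mathcal{W}}).$$

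Then I would specialize to $\overrightarrow{\mathcal{W}}=\overrightarrow{\mathbf{1}}$, where $\mathcal{F}^{-1}=(\mathcal{X}^{T}*\mathcal{X})^{-1}$ and the trailing factor becomes $\widetilde{\overrightarrow{\mathcal{E}}}$, so that the Jacobian applied to the increment $\overrightarrow{\mathcal{W}}-\overrightarrow{\mathbf{1}}$ reads $(\mathcal{X}^{T}*\mathcal{X})^{-1}*\mathcal{X}^{T}*\text{diag}_{t}(\overrightarrow{\mathcal{W}}-\overrightarrow{\mathbf{1}})*\widetilde{\overrightarrow{\mathcal{E}}}$. The final manipulation, which I expect to be the one genuinely delicate point, is to move the increment out of the f-diagonal slot: I would invoke commutativity of the t-product on tubal scalars (circular convolution is commutative) to establish the tube-wise identity $\text{diag}_{t}(\overrightarrow{\mathbf{a}})*\overrightarrow{\mathbf{b}}=\text{diag}_{t}(\overrightarrow{\mathbf{b}})*\overrightarrow{\mathbf{a}}$, hence $\text{diag}_{t}(\overrightarrow{\mathcal{W}}-\overrightarrow{\mathbf{1}})*\widetilde{\overrightarrow{\mathcal{E}}}=\text{diag}_{t}(\widetilde{\overrightarrow{\mathcal{E}}})*(\overrightarrow{\mathcal{W}}-\overrightarrow{\mathbf{1}})$, which puts the first-order term in exactly the stated form. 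Finally I would collect the second- and higher-order contributions supplied by Taylor's theorem into the remainder $\mathcal{R}_{\mathcal{W}}$, completing the expansion.
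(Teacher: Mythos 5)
Your proposal is correct, and it reaches the same first-order term by the same underlying calculus (product rule plus the derivative-of-the-inverse identity $d(\mathcal{F}^{-1})=-\mathcal{F}^{-1}*(d\mathcal{F})*\mathcal{F}^{-1}$), but the technical execution is genuinely different from the paper's. The paper routes everything through the vectorization machinery: it invokes the lemma on $\frac{\partial\text{vec}_{t}(\mathcal{A}*\mathcal{F}*\mathcal{B})}{\partial\overrightarrow{\mathcal{X}}^{ST}}$ and $\frac{\partial\text{vec}_{t}(\mathcal{F}^{-1})}{\partial\overrightarrow{\mathcal{X}}^{ST}}$, carries the Jacobian around as a t-Kronecker product $\bigl((\overrightarrow{\mathcal{Y}}^{ST}-(\mathcal{X}*\widetilde{\overrightarrow{\mathcal{B}}}_{ols})^{ST})\otimes_{t}((\mathcal{X}^{T}*\mathcal{X})^{-1}*\mathcal{X}^{T})\bigr)$ acting on the explicit block form of $\frac{\partial\text{vec}_{t}(\text{diag}_{t}(\overrightarrow{\mathcal{W}}))}{\partial\overrightarrow{\mathcal{W}}^{ST}}$ (a stack of $\mathcal{I}_{n(:,i,:)}*\mathcal{I}_{n(:,i,:)}^{T}$ blocks), and only converts back to the $\text{diag}_{t}(\widetilde{\overrightarrow{\mathcal{E}}})*(\overrightarrow{\mathcal{W}}-\overrightarrow{\mathbf{1}})$ form in the very last line. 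You work directly with differentials of tubal matrices, which avoids $\text{vec}_{t}$ and $\otimes_{t}$ entirely and makes the cancellation $d\widetilde{\overrightarrow{\mathcal{B}}}_{\mathcal{W}}=\mathcal{F}^{-1}*\mathcal{X}^{T}*\text{diag}_{t}(d\overrightarrow{\mathcal{W}})*(\overrightarrow{\mathcal{Y}}-\mathcal{X}*\widetilde{\overrightarrow{\mathcal{B}}}_{\mathcal{W}})$ transparent; the price is that you must then justify the swap $\text{diag}_{t}(\overrightarrow{\mathcal{W}}-\overrightarrow{\mathbf{1}})*\widetilde{\overrightarrow{\mathcal{E}}}=\text{diag}_{t}(\widetilde{\overrightarrow{\mathcal{E}}})*(\overrightarrow{\mathcal{W}}-\overrightarrow{\mathbf{1}})$ explicitly, which you correctly reduce to commutativity of tubal-scalar multiplication (circulant matrices commute, so the $i$-th tube is $\mathbf{a}_{i}*\mathbf{b}_{i}=\mathbf{b}_{i}*\mathbf{a}_{i}$). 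This is exactly the fact the paper's final Kronecker-to-diagonal conversion uses implicitly, so you have correctly isolated the one delicate step. Your version is shorter and arguably cleaner; the paper's version has the advantage of exhibiting the Jacobian tubal matrix itself as a well-defined object in the formalism it set up in Section 2, which is reused when the conditional variance is computed in Theorem 2. Both handle the remainder identically, by declaring $\mathcal{R}_{\mathcal{W}}$ to be whatever Taylor's theorem leaves over, with smallness deferred to the remark following the lemma.
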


\begin{remark}
	The linear approximation in Lemma \ref{taylor_expan} holds when the remainder $\mathcal{R}_{\mathcal{W}}$ is very small, i.e., $\|\mathcal{R}_{\mathcal{W}}\|_{F}=o_{p}(\|\overrightarrow{\mathcal{W}}-\overrightarrow{\mathbf{1}}\|_{F})$, where $o_{p}$ means "little o" with high probability over the randomness in the random tubal vector $\overrightarrow{\mathcal{W}}$. For $\mathcal{R}_{\mathcal{W}}$, it depends strongly on the sampling process, and is different for different sampling probability distributions. In theory, we cannot yet state when $\mathcal{R}_{\mathcal{W}}$ is small.  In Section \ref{sub_ex}, we will numerically demonstrate the validity of the linear approximation.
\end{remark}

We next present the expressions of the conditional and unconditional expectations and variances for the estimator $\widetilde{\overrightarrow{\mathcal{B}}}_{\mathcal{W}}$. 

\begin{theorem}\label{exvar_sta}
	Let $\widetilde{\overrightarrow{\mathcal{B}}}_{\mathcal{W}}$ be the output calculated by the Subsampling TLS method, i.e., Algorithm \ref{Subsample_TLS}. Then the conditional expectation and variance of $\widetilde{\overrightarrow{\mathcal{B}}}_{\mathcal{W}}$ 
	are given by:
\begin{align}
	\mathbf{E}_{\overrightarrow{\mathcal{W}}}[\widetilde{\overrightarrow{\mathcal{B}}}_{\mathcal{W}}\mid\overrightarrow{\mathcal{Y}}]=&\widetilde{\overrightarrow{\mathcal{B}}}_{\text{ols}}+\mathbf{E}_{\overrightarrow{\mathcal{W}}}[\mathcal{R}_{\mathcal{W}}];\label{unconditionexpe}\\
	\mathbf{Var}_{\overrightarrow{\mathcal{W}}}[\widetilde{\overrightarrow{\mathcal{B}}}_{\mathcal{W}}\mid\overrightarrow{\mathcal{Y}}]=&(\mathcal{X}^{T}*\mathcal{X})^{-1}*\mathcal{X}^{T}*\text{diag}_{t}(\widetilde{\overrightarrow{\mathcal{E}}})*\text{diag}_{t}\left(\frac{1}{\tau\pi_{i}}\mathbf{1}\right)*\text{diag}_{t}(\widetilde{\overrightarrow{\mathcal{E}}})*\mathcal{X}*(\mathcal{X}^{T}*\mathcal{X})^{-1}\nonumber\\
	&+\mathbf{Var}_{\overrightarrow{\mathcal{W}}}[\mathcal{R}_{\mathcal{W}}],\label{unconditionvar}
\end{align}	
where $\mathbf{1}$ is the tubal scalar with components $\mathbf{1}_{(1)}=1$ and $\mathbf{1}_{(k)}=0$ for $k=2,3,\cdots,l$ and $\text{diag}_{t}\left(\frac{1}{\tau\pi_{i}}\mathbf{1}\right)$ is an f-diagonal tubal matrix with diagonal tubal scalars $\left\{\frac{1}{\tau\pi_{i}}\mathbf{1}\right\}_{i=1}^{n}$.

Further, the unconditional expectation and variance of $\widetilde{\overrightarrow{\mathcal{B}}}_{\mathcal{W}}$ are:
\begin{align}
	\mathbf{E}[\widetilde{\overrightarrow{\mathcal{B}}}_{\mathcal{W}}]=&\overrightarrow{\mathcal{B}}_{0}+\mathbf{E}[\mathcal{R}_{\mathcal{W}}];\label{conditionexpe}\\
	\mathbf{Var}[\widetilde{\overrightarrow{\mathcal{B}}}_{\mathcal{W}}]=&\sigma^{2}(\mathcal{X}^{T}*\mathcal{X})^{-1}+\frac{\sigma^{2}}{\tau}(\mathcal{X}^{T}*\mathcal{X})^{-1}*\mathcal{X}^{T}*\text{diag}_{\text{t}}\left(\frac{1}{\pi_{i}}(\mathbf{1}-	\mathcal{X}_{(i,:,:)}*(\mathcal{X}^{T}*\mathcal{X})^{-1}*\mathcal{X}_{(i,:,:)}^{T})\right)\nonumber\\
	&*\mathcal{X}*(\mathcal{X}^{T}*\mathcal{X})^{-1}+\mathbf{Var}[\mathcal{R}_{\mathcal{W}}].\label{conditionvar}
\end{align}	
\end{theorem}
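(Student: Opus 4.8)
The plan is to build everything on the linear approximation from Lemma~\ref{taylor_expan}, namely $\widetilde{\overrightarrow{\mathcal{B}}}_{\mathcal{W}}=\widetilde{\overrightarrow{\mathcal{B}}}_{\text{ols}}+\mathcal{A}*(\overrightarrow{\mathcal{W}}-\overrightarrow{\mathbf{1}})+\mathcal{R}_{\mathcal{W}}$, where I abbreviate $\mathcal{A}:=(\mathcal{X}^{T}*\mathcal{X})^{-1}*\mathcal{X}^{T}*\text{diag}_{t}(\widetilde{\overrightarrow{\mathcal{E}}})$. Two structural facts drive the whole argument. First, the rescaling in Algorithm~\ref{Subsample_TLS} makes the weights unbiased: writing $\mathbf{w}_{i}=\frac{c_{i}}{\tau\pi_{i}}\mathbf{1}$ with $c_{i}$ the number of times index $i$ is drawn, the $\tau$ i.i.d.\ trials give $c_{i}\sim\text{Binomial}(\tau,\pi_{i})$, hence $\mathbf{E}_{\overrightarrow{\mathcal{W}}}[\mathbf{w}_{i}\mid\overrightarrow{\mathcal{Y}}]=\mathbf{1}$ and $\mathbf{E}_{\overrightarrow{\mathcal{W}}}[\overrightarrow{\mathcal{W}}-\overrightarrow{\mathbf{1}}\mid\overrightarrow{\mathcal{Y}}]=\overrightarrow{\mathbf{0}}$. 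Second, the normal equations give $\mathcal{X}^{T}*\widetilde{\overrightarrow{\mathcal{E}}}=\overrightarrow{\mathbf{0}}$, so that $\mathcal{A}*\overrightarrow{\mathbf{1}}=(\mathcal{X}^{T}*\mathcal{X})^{-1}*\mathcal{X}^{T}*\widetilde{\overrightarrow{\mathcal{E}}}=\overrightarrow{\mathbf{0}}$. Taking $\mathbf{E}_{\overrightarrow{\mathcal{W}}}[\,\cdot\mid\overrightarrow{\mathcal{Y}}]$ of the expansion and using the first fact kills the linear term and yields \eqref{unconditionexpe} at once.

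For the conditional variance \eqref{unconditionvar}, I would apply the variance operator to the expansion. Since $\widetilde{\overrightarrow{\mathcal{B}}}_{\text{ols}}$ and $\mathcal{A}$ are constants given $\overrightarrow{\mathcal{Y}}$, the identity $\mathbf{Var}[\mathcal{A}*\mathcal{X}]=\mathcal{A}*\mathbf{Var}[\mathcal{X}]*\mathcal{A}^{T}$ from the preliminaries gives $\mathbf{Var}_{\overrightarrow{\mathcal{W}}}[\mathcal{A}*(\overrightarrow{\mathcal{W}}-\overrightarrow{\mathbf{1}})\mid\overrightarrow{\mathcal{Y}}]=\mathcal{A}*\mathbf{Var}_{\overrightarrow{\mathcal{W}}}[\overrightarrow{\mathcal{W}}\mid\overrightarrow{\mathcal{Y}}]*\mathcal{A}^{T}$, while the remainder and its cross term are grouped into $\mathbf{Var}_{\overrightarrow{\mathcal{W}}}[\mathcal{R}_{\mathcal{W}}]$. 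The crux is the covariance of the weight vector: from the multinomial law of the counts, $\mathbf{Var}[\mathbf{w}_{i}]=\frac{1-\pi_{i}}{\tau\pi_{i}}\mathbf{1}$ and $\mathbf{Cov}(\mathbf{w}_{i},\mathbf{w}_{j})=-\frac{1}{\tau}\mathbf{1}$ for $i\neq j$, i.e.\ $\mathbf{Var}_{\overrightarrow{\mathcal{W}}}[\overrightarrow{\mathcal{W}}\mid\overrightarrow{\mathcal{Y}}]=\text{diag}_{t}(\frac{1}{\tau\pi_{i}}\mathbf{1})-\frac{1}{\tau}\overrightarrow{\mathbf{1}}*\overrightarrow{\mathbf{1}}^{T}$. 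Conjugating by $\mathcal{A}$ and invoking $\mathcal{A}*\overrightarrow{\mathbf{1}}=\overrightarrow{\mathbf{0}}$ annihilates the rank-one correction, leaving exactly $\mathcal{A}*\text{diag}_{t}(\frac{1}{\tau\pi_{i}}\mathbf{1})*\mathcal{A}^{T}$, which upon substituting $\mathcal{A}$ is the claimed expression in \eqref{unconditionvar}.

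The unconditional statements \eqref{conditionexpe}--\eqref{conditionvar} then follow by the tower rules $\mathbf{E}[\,\cdot\,]=\mathbf{E}[\mathbf{E}_{\overrightarrow{\mathcal{W}}}[\,\cdot\mid\overrightarrow{\mathcal{Y}}]]$ and $\mathbf{Var}[\,\cdot\,]=\mathbf{Var}[\mathbf{E}_{\overrightarrow{\mathcal{W}}}[\,\cdot\mid\overrightarrow{\mathcal{Y}}]]+\mathbf{E}[\mathbf{Var}_{\overrightarrow{\mathcal{W}}}[\,\cdot\mid\overrightarrow{\mathcal{Y}}]]$, combined with the regression model $\overrightarrow{\mathcal{Y}}=\mathcal{X}*\overrightarrow{\mathcal{B}}_{0}+\overrightarrow{\mathcal{E}}$. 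For the expectation, $\mathbf{E}[\widetilde{\overrightarrow{\mathcal{B}}}_{\text{ols}}]=(\mathcal{X}^{T}*\mathcal{X})^{-1}*\mathcal{X}^{T}*\mathcal{X}*\overrightarrow{\mathcal{B}}_{0}=\overrightarrow{\mathcal{B}}_{0}$ gives \eqref{conditionexpe}. For the variance, the first tower term produces $\mathbf{Var}[\widetilde{\overrightarrow{\mathcal{B}}}_{\text{ols}}]=\sigma^{2}(\mathcal{X}^{T}*\mathcal{X})^{-1}$ (using $\mathbf{Var}[\overrightarrow{\mathcal{Y}}]=\sigma^{2}\mathcal{I}_{n}$ and $(\mathcal{X}^{T}*\mathcal{X})^{-1}*\mathcal{X}^{T}*\mathcal{X}*(\mathcal{X}^{T}*\mathcal{X})^{-1}=(\mathcal{X}^{T}*\mathcal{X})^{-1}$), while the second requires averaging the conditional variance over $\overrightarrow{\mathcal{Y}}$. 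Here I would write $\widetilde{\overrightarrow{\mathcal{E}}}=(\mathcal{I}_{n}-\mathcal{H})*\overrightarrow{\mathcal{E}}$ with the projection $\mathcal{H}=\mathcal{X}*(\mathcal{X}^{T}*\mathcal{X})^{-1}*\mathcal{X}^{T}$, so that the $i$-th diagonal tube of $\text{diag}_{t}(\widetilde{\overrightarrow{\mathcal{E}}})*\text{diag}_{t}(\widetilde{\overrightarrow{\mathcal{E}}})$ has expectation $\sigma^{2}(\mathbf{1}-\mathcal{X}_{(i,:,:)}*(\mathcal{X}^{T}*\mathcal{X})^{-1}*\mathcal{X}_{(i,:,:)}^{T})$ by idempotency and T-symmetry of $\mathcal{I}_{n}-\mathcal{H}$; multiplying by $\frac{1}{\tau\pi_{i}}$ and conjugating by $(\mathcal{X}^{T}*\mathcal{X})^{-1}*\mathcal{X}^{T}$ produces the middle term of \eqref{conditionvar}, with all remainder contributions collected into $\mathbf{Var}[\mathcal{R}_{\mathcal{W}}]$.

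The main obstacle I anticipate is the careful covariance computation of $\overrightarrow{\mathcal{W}}$ together with the cancellation it enables: one must confirm that sampling \emph{with replacement} really produces the multinomial (rather than diagonal) covariance, and then verify that the residual-orthogonality $\mathcal{A}*\overrightarrow{\mathbf{1}}=\overrightarrow{\mathbf{0}}$ exactly removes the $-\frac{1}{\tau}\overrightarrow{\mathbf{1}}*\overrightarrow{\mathbf{1}}^{T}$ piece---otherwise the clean formula \eqref{unconditionvar} would acquire a spurious term. A secondary technical nuisance is the tubal-transpose bookkeeping (the transpose of an f-diagonal tubal matrix reverses frontal slices $2,\dots,l$), which must be tracked so that the two $\text{diag}_{t}(\widetilde{\overrightarrow{\mathcal{E}}})$ factors combine correctly; and, throughout, one tacitly absorbs the cross-covariances between the linear term and $\mathcal{R}_{\mathcal{W}}$ into the stated remainder variances, which is legitimate only under the small-remainder regime discussed after Lemma~\ref{taylor_expan}.
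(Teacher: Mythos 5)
Your proposal is correct and follows essentially the same route as the paper: Taylor expansion from Lemma~\ref{taylor_expan}, the scaled multinomial law of $\overrightarrow{\mathcal{W}}$ giving $\mathbf{E}[\overrightarrow{\mathcal{W}}]=\overrightarrow{\mathbf{1}}$ and $\mathbf{Var}[\overrightarrow{\mathcal{W}}]=\text{diag}_{t}(\frac{1}{\tau\pi_{i}}\mathbf{1})-\frac{1}{\tau}\mathcal{J}_{n}$, then the tower rules together with $\widetilde{\overrightarrow{\mathcal{E}}}=(\mathcal{I}_{n}-\mathcal{H})*\overrightarrow{\mathcal{E}}$ for the unconditional statements. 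If anything, you are more explicit than the paper at one point: the paper silently drops the $-\frac{1}{\tau}\mathcal{J}_{n}$ term in the conditional variance, whereas you correctly justify this via the residual orthogonality $\mathcal{X}^{T}*\widetilde{\overrightarrow{\mathcal{E}}}=\overrightarrow{\mathbf{0}}$, and you also flag (as the paper does not) that absorbing the cross-covariance with $\mathcal{R}_{\mathcal{W}}$ into the remainder variance is only valid in the small-remainder regime.
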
	

\begin{remark}
	The equation (\ref{unconditionexpe}) states that,  when $\overrightarrow{\mathcal{Y}}$ is given and the $\mathbf{E}_{\overrightarrow{\mathcal{W}}}[\mathcal{R}_{\mathcal{W}}]$ term is negligible, the estimator $\widetilde{\overrightarrow{\mathcal{B}}}_{\mathcal{W}}$ is approximately unbiased relative to the least squares estimator $\widetilde{\overrightarrow{\mathcal{B}}}_{\text{ols}}$, while the equation (\ref{conditionexpe})  states that, when the $\mathbf{E}[\mathcal{R}_{\mathcal{W}}]$ term is negligible, the estimator $\widetilde{\overrightarrow{\mathcal{B}}}_{\mathcal{W}}$ is approximately unbiased relative to the true solution $\overrightarrow{\mathcal{B}}_{0}$.
\end{remark}
\begin{remark}
	From the equations (\ref{unconditionvar}) and (\ref{conditionvar}), we know that both the conditional variance and the (second term of the) unconditional variance are inversely proportional to the sample size $\tau$, and both contain a sandwich-type expression, the middle of which depends on the sampling probabilities $\{\pi_{i}\}_{i=1}^{n}$. Moreover, the first term of $\mathbf{Var}[\widetilde{\overrightarrow{\mathcal{B}}}_{\mathcal{W}}]$ is $\sigma^{2}(\mathcal{X}^{T}*\mathcal{X})^{-1}$, which equals the variance of $\widetilde{\overrightarrow{\mathcal{B}}}_{\text{ols}}$. This implies that $\mathbf{Var}[\widetilde{\overrightarrow{\mathcal{B}}}_{\mathcal{W}}]>\mathbf{Var}[\widetilde{\overrightarrow{\mathcal{B}}}_{\text{ols}}]$. The conclusion is consistent with the one in \cite{jin2017generalized} that $\widetilde{\overrightarrow{\mathcal{B}}}_{\rm ols}$ is the best linear unbiased estimator, namely, its  variance is minimal among all linear unbiased estimators.
\end{remark}

In the following, we provide two corollaries by specializing Theorem \ref{exvar_sta}, one for uniform sampling and the other for leverage score sampling.
\begin{corollary}\label{exvar_unif}
	Let $\widetilde{\overrightarrow{\mathcal{B}}}_{\text{Unif}}$ be the output calculated by the STLS-Unif method. Then the conditional expectation and variance of $\widetilde{\overrightarrow{\mathcal{B}}}_{\text{Unif}}$ 
	are given by:
	\begin{align*}
		\mathbf{E}_{\overrightarrow{\mathcal{W}}}[\widetilde{\overrightarrow{\mathcal{B}}}_{\text{Unif}}\mid\overrightarrow{\mathcal{Y}}]=&\widetilde{\overrightarrow{\mathcal{B}}}_{\text{ols}}+\mathbf{E}_{\overrightarrow{\mathcal{W}}}[\mathcal{R}_{\text{Unif}}];\\
		\mathbf{Var}_{\overrightarrow{\mathcal{W}}}[\widetilde{\overrightarrow{\mathcal{B}}}_{\text{Unif}}\mid\overrightarrow{\mathcal{Y}}]=&\frac{n}{\tau}(\mathcal{X}^{T}*\mathcal{X})^{-1}*\mathcal{X}^{T}*\text{diag}_{t}(\widetilde{\overrightarrow{\mathcal{E}}})*\text{diag}_{t}(\widetilde{\overrightarrow{\mathcal{E}}})*\mathcal{X}*(\mathcal{X}^{T}*\mathcal{X})^{-1}+\mathbf{Var}_{\overrightarrow{\mathcal{W}}}[\mathcal{R}_{\text{Unif}}].
	\end{align*}	
	
	Further, the unconditional expectation and variance of $\widetilde{\overrightarrow{\mathcal{B}}}_{\text{Unif}}$ are:
	\begin{align*}
		\mathbf{E}[\widetilde{\overrightarrow{\mathcal{B}}}_{\text{Unif}}]=&\overrightarrow{\mathcal{B}}_{0}+\mathbf{E}[\mathcal{R}_{\text{Unif}}];\\
		\mathbf{Var}[\widetilde{\overrightarrow{\mathcal{B}}}_{\text{Unif}}]=&\sigma^{2}(\mathcal{X}^{T}*\mathcal{X})^{-1}+\frac{n\sigma^{2}}{\tau}(\mathcal{X}^{T}*\mathcal{X})^{-1}*\mathcal{X}^{T}*\text{diag}_{\text{t}}\left(\mathbf{1}-	\mathcal{X}_{(i,:,:)}*(\mathcal{X}^{T}*\mathcal{X})^{-1}*\mathcal{X}_{(i,:,:)}^{T}\right)\\
		&*\mathcal{X}*(\mathcal{X}^{T}*\mathcal{X})^{-1}+\mathbf{Var}[\mathcal{R}_{\text{Unif}}].
	\end{align*}	
\end{corollary}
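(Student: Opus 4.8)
The plan is to obtain every expression in Corollary~\ref{exvar_unif} directly from the general formulas of Theorem~\ref{exvar_sta} by substituting the uniform sampling probabilities $\pi_i = \frac{1}{n}$ for all $i\in[n]$ and simplifying the resulting t-products. Since the STLS-Unif method is precisely the Subsampling TLS method run with this particular distribution, the estimator $\widetilde{\overrightarrow{\mathcal{B}}}_{\text{Unif}}$ coincides with $\widetilde{\overrightarrow{\mathcal{B}}}_{\mathcal{W}}$ and its Taylor remainder is written $\mathcal{R}_{\text{Unif}}$; all four identities then follow from the corresponding statements in Theorem~\ref{exvar_sta}.

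First I would dispose of the two expectation identities. Neither the conditional expectation \eqref{unconditionexpe} nor the unconditional expectation \eqref{conditionexpe} in Theorem~\ref{exvar_sta} depends on $\{\pi_i\}_{i=1}^{n}$ except through the remainder, so the expressions for $\mathbf{E}_{\overrightarrow{\mathcal{W}}}[\widetilde{\overrightarrow{\mathcal{B}}}_{\text{Unif}}\mid\overrightarrow{\mathcal{Y}}]$ and $\mathbf{E}[\widetilde{\overrightarrow{\mathcal{B}}}_{\text{Unif}}]$ are immediate once $\mathcal{R}_{\mathcal{W}}$ is renamed $\mathcal{R}_{\text{Unif}}$.

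Next I would treat the conditional variance. With $\pi_i = \frac{1}{n}$ the middle f-diagonal factor in \eqref{unconditionvar} becomes $\text{diag}_t\bigl(\frac{1}{\tau\pi_i}\mathbf{1}\bigr) = \text{diag}_t\bigl(\frac{n}{\tau}\mathbf{1}\bigr) = \frac{n}{\tau}\mathcal{I}_n$, since each diagonal tubal scalar equals $\frac{n}{\tau}\mathbf{1}$ and $\mathbf{1}$ is the identity tubal scalar. Pulling the scalar factor $\frac{n}{\tau}$ through the surrounding t-products then collapses the two $\text{diag}_t(\widetilde{\overrightarrow{\mathcal{E}}})$ factors into a single product $\text{diag}_t(\widetilde{\overrightarrow{\mathcal{E}}})*\text{diag}_t(\widetilde{\overrightarrow{\mathcal{E}}})$ scaled by $\frac{n}{\tau}$, yielding the stated formula. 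The unconditional variance follows in the same manner from \eqref{conditionvar}: substituting $\frac{1}{\pi_i} = n$ inside the middle f-diagonal tubal matrix factors out a scalar $n$, which combines with the prefactor $\frac{\sigma^2}{\tau}$ to give $\frac{n\sigma^2}{\tau}$, while the leading term $\sigma^2(\mathcal{X}^T*\mathcal{X})^{-1}$ and the remainder term $\mathbf{Var}[\mathcal{R}_{\text{Unif}}]$ are carried over unchanged.

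Because the argument is a specialization of an already-proved theorem, there is no genuine obstacle. The only point requiring care is the t-product bookkeeping in the conditional variance, namely verifying that the constant $\frac{n}{\tau}\mathbf{1}$ on each diagonal entry assembles into the scalar multiple $\frac{n}{\tau}\mathcal{I}_n$ of the identity tubal matrix and that this scalar commutes with the adjacent f-diagonal factors under $*$, so that it can be extracted cleanly and the two residual diagonal factors merge.
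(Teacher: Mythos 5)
Your proposal is correct and is exactly what the paper does: the corollary is obtained by specializing Theorem \ref{exvar_sta} to $\pi_i=1/n$, with the only nontrivial bookkeeping being that $\text{diag}_{t}\left(\frac{1}{\tau\pi_{i}}\mathbf{1}\right)=\frac{n}{\tau}\mathcal{I}_{n}$ acts as a scalar multiple of the t-product identity and can be pulled out of the sandwich. The paper gives no separate proof for this corollary, and your substitution argument supplies precisely the omitted verification.
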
	
\begin{corollary}\label{exvar_lev}
	Let $\widetilde{\overrightarrow{\mathcal{B}}}_{\text{Lev}}$ be the output calculated by the STLS-Lev method. Then the conditional expectation and variance of $\widetilde{\overrightarrow{\mathcal{B}}}_{\text{Lev}}$ 
	are given by:
	\begin{align*}
		\mathbf{E}_{\overrightarrow{\mathcal{W}}}[\widetilde{\overrightarrow{\mathcal{B}}}_{\text{Lev}}\mid\overrightarrow{\mathcal{Y}}]=&\widetilde{\overrightarrow{\mathcal{B}}}_{\text{ols}}+\mathbf{E}_{\overrightarrow{\mathcal{W}}}[\mathcal{R}_{\text{Lev}}];\\
		\mathbf{Var}_{\overrightarrow{\mathcal{W}}}[\widetilde{\overrightarrow{\mathcal{B}}}_{\text{Lev}}\mid\overrightarrow{\mathcal{Y}}]=&\frac{p}{\tau}(\mathcal{X}^{T}*\mathcal{X})^{-1}*\mathcal{X}^{T}*\text{diag}_{t}(\widetilde{\overrightarrow{\mathcal{E}}})*\text{diag}_{\text{t}}\left(\frac{1}{h_{i}}\mathbf{1}\right)*\text{diag}_{t}(\widetilde{\overrightarrow{\mathcal{E}}})*\mathcal{X}*(\mathcal{X}^{T}*\mathcal{X})^{-1}\\
		&+\mathbf{Var}_{\overrightarrow{\mathcal{W}}}[\mathcal{R}_{\text{Lev}}].
	\end{align*}	
	
	Further,
	the unconditional expectation and variance of $\widetilde{\overrightarrow{\mathcal{B}}}_{\text{Lev}}$ are:
	\begin{align*}
		\mathbf{E}[\widetilde{\overrightarrow{\mathcal{B}}}_{\text{Lev}}]=&\overrightarrow{\mathcal{B}}_{0}+\mathbf{E}[\mathcal{R}_{\text{Lev}}];\\
		\mathbf{Var}[\widetilde{\overrightarrow{\mathcal{B}}}_{\text{Lev}}]=&\sigma^{2}(\mathcal{X}^{T}*\mathcal{X})^{-1}+\frac{p\sigma^{2}}{\tau}(\mathcal{X}^{T}*\mathcal{X})^{-1}*\mathcal{X}^{T}*\text{diag}_{\text{t}}\left(\frac{1}{h_{i}}(\mathbf{1}-	\mathcal{X}_{(i,:,:)}*(\mathcal{X}^{T}*\mathcal{X})^{-1}*\mathcal{X}_{(i,:,:)}^{T})\right)\\
		&*\mathcal{X}*(\mathcal{X}^{T}*\mathcal{X})^{-1}+\mathbf{Var}[\mathcal{R}_{\text{Lev}}].
	\end{align*}	
\end{corollary}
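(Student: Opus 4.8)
The plan is to obtain Corollary~\ref{exvar_lev} as a direct specialization of Theorem~\ref{exvar_sta}, substituting the leverage score probabilities $\pi_i=\pi_i^{Lev}=h_i/p$ with $h_i=\|\mathcal{U}_{\mathcal{X}(i,:,:)}\|_F^2$ into the four expressions \eqref{unconditionexpe}--\eqref{conditionvar} and simplifying. The one point that genuinely needs checking is that this form is a bona fide probability distribution, i.e.\ that $\sum_{i=1}^{n}h_i=p$, since it is this normalization that produces the factor $p$ appearing out front in both variances. I would verify it in the Fourier domain: because $\mathcal{U}_{\mathcal{X}}$ is partially orthogonal, $\mathcal{U}_{\mathcal{X}}^{T}*\mathcal{U}_{\mathcal{X}}=\mathcal{I}_p$ block-diagonalizes under the DFT to $(\widehat{\mathcal{U}_{\mathcal{X}}})_{(k)}^{H}(\widehat{\mathcal{U}_{\mathcal{X}}})_{(k)}=I_p$ for every $k\in[l]$, so $\|(\widehat{\mathcal{U}_{\mathcal{X}}})_{(k)}\|_F^2=p$; Parseval's identity $\|\mathcal{U}_{\mathcal{X}}\|_F^2=\tfrac{1}{l}\sum_{k=1}^{l}\|(\widehat{\mathcal{U}_{\mathcal{X}}})_{(k)}\|_F^2$ then gives $\sum_{i=1}^{n}h_i=\|\mathcal{U}_{\mathcal{X}}\|_F^2=p$, whence $\tfrac{1}{\pi_i}=\tfrac{p}{h_i}$ and $\tfrac{1}{\tau\pi_i}=\tfrac{p}{\tau h_i}$.

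With this identity the two expectation formulas are immediate: the probabilities $\{\pi_i\}$ enter \eqref{unconditionexpe} and \eqref{conditionexpe} only through the remainder, so I would simply transcribe those expressions and relabel $\mathcal{R}_{\mathcal{W}}$ as $\mathcal{R}_{\text{Lev}}$ to record that the Taylor remainder of Lemma~\ref{taylor_expan} is now that of the leverage-score scheme. For the variances I would operate on the f-diagonal factor sandwiched between the two outer blocks and use that $\text{diag}_t$ is linear in its tubal-scalar argument. In \eqref{unconditionvar}, the substitution $\tfrac{1}{\tau\pi_i}=\tfrac{p}{\tau h_i}$ turns the middle factor $\text{diag}_t\!\big(\tfrac{1}{\tau\pi_i}\mathbf{1}\big)$ into $\tfrac{p}{\tau}\,\text{diag}_t\!\big(\tfrac{1}{h_i}\mathbf{1}\big)$; because the common real factor $p/\tau$ scales every diagonal tubal scalar identically it can be pulled outside $\text{diag}_t$ and, being a real scalar, commutes past the surrounding t-products, giving exactly the claimed conditional variance. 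Substituting $\tfrac{1}{\pi_i}=\tfrac{p}{h_i}$ into \eqref{conditionvar} likewise converts $\tfrac{\sigma^2}{\tau}\,\text{diag}_{\text{t}}\!\big(\tfrac{1}{\pi_i}(\mathbf{1}-\mathcal{X}_{(i,:,:)}*(\mathcal{X}^{T}*\mathcal{X})^{-1}*\mathcal{X}_{(i,:,:)}^{T})\big)$ into $\tfrac{p\sigma^2}{\tau}\,\text{diag}_{\text{t}}\!\big(\tfrac{1}{h_i}(\mathbf{1}-\mathcal{X}_{(i,:,:)}*(\mathcal{X}^{T}*\mathcal{X})^{-1}*\mathcal{X}_{(i,:,:)}^{T})\big)$, while the leading term $\sigma^{2}(\mathcal{X}^{T}*\mathcal{X})^{-1}$ is untouched; relabeling the remainder once more as $\mathcal{R}_{\text{Lev}}$ closes all four identities.

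Since the argument is pure substitution, I do not expect any substantive obstacle. The only step demanding care is the normalization $\sum_i h_i=p$ above: were it omitted, the scalar $p$ in front of both variance corrections would be misidentified and the $\tfrac{1}{h_i}$ weights would not match the general $\tfrac{1}{\tau\pi_i}$ pattern of Theorem~\ref{exvar_sta}. Everything beyond that relies only on the linearity of $\text{diag}_t$ and on the fact that a common real multiple can be factored through the t-product, so no new estimates or probabilistic arguments are required.
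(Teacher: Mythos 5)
Your proposal is correct and is exactly the route the paper intends: the corollary is obtained by substituting $\pi_i^{Lev}=h_i/p$ into the four formulas of Theorem~\ref{exvar_sta}, and the paper gives no further proof beyond the phrase ``by specializing Theorem~\ref{exvar_sta}.'' Your explicit verification that $\sum_{i=1}^{n}h_i=\|\mathcal{U}_{\mathcal{X}}\|_F^2=p$ (consistent with the paper's own use of $\|\mathcal{U}_{\mathcal{X}}\|_F^2=p$ in the proof of Lemma~\ref{fullrank}) is the only nontrivial point, and you handle it correctly.
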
	
\begin{remark}\label{stat_uniflev}
	 From Corollaries \ref{exvar_unif} and \ref{exvar_lev}, on the one hand, we find that $\mathbf{Var}[\widetilde{\overrightarrow{\mathcal{B}}}_{\text{Unif}}]$ and $\mathbf{Var}[\widetilde{\overrightarrow{\mathcal{B}}}_{\text{Lev}}]$ depend on $n/\tau$ and $p/\tau$, respectively, thus $\mathbf{Var}[\widetilde{\overrightarrow{\mathcal{B}}}_{\text{Unif}}]$ can be very large unless $\tau\approx n$ and $\mathbf{Var}[\widetilde{\overrightarrow{\mathcal{B}}}_{\text{Lev}}]$ can be very small if $p\ll \tau\ll n$. On the other hand, the sandwich-type expression of $\mathbf{Var}[\widetilde{\overrightarrow{\mathcal{B}}}_{\text{Lev}}]$ can be inflated by very small leverage scores, while that of $\mathbf{Var}[\widetilde{\overrightarrow{\mathcal{B}}}_{\text{Unif}}]$ cannot.
\end{remark}

It should be worth noting that Remark \ref{re_opt} shows that the STLS-Lev method outperforms the STLS-Unif method from the optimization perspective. However, Remark \ref{stat_uniflev} indicates that from the statistical perspective, neither method is dominant and their variances may both be large. With this in mind, we next give an optimal subsampling probability distribution.

From Theorem \ref{exvar_sta}, we know that when the $\mathbf{E}[\mathcal{R}_{\mathcal{W}}]$ term in (\ref{conditionexpe})  and the $\mathbf{Var}[\mathcal{R}_{\mathcal{W}}]$  term in (\ref{conditionvar}) are negligible, the estimator $\widetilde{\overrightarrow{\mathcal{B}}}_{\mathcal{W}}$ is approximately unbiased relative to $\overrightarrow{\mathcal{B}}_{0}$ and its variance can be approximated as follows:
\begin{align*}
	\mathbf{Var}[\widetilde{\overrightarrow{\mathcal{B}}}_{\mathcal{W}}]\approx&\sigma^{2}(\mathcal{X}^{T}*\mathcal{X})^{-1}+\frac{\sigma^{2}}{\tau}(\mathcal{X}^{T}*\mathcal{X})^{-1}*\mathcal{X}^{T}*\text{diag}_{\text{t}}\left(\frac{1}{\pi_{i}}(\mathbf{1}-	\mathcal{X}_{(i,:,:)}*(\mathcal{X}^{T}*\mathcal{X})^{-1}*\mathcal{X}_{(i,:,:)}^{T})\right)\nonumber\\
	&*\mathcal{X}*(\mathcal{X}^{T}*\mathcal{X})^{-1}.
\end{align*}	
Thus, we can make the mean squared error to attain its minimum approximately by minimizing $\text{trace}(\mathbf{Var}[\widetilde{\overrightarrow{\mathcal{B}}}_{\mathcal{W}}])$, where $\text{trace}(\mathcal{ A})$ means the trace of $\mathcal{ A}$ and is defined as $\text{trace}(\mathcal{ A})=\frac{1}{l}\sum_{k=1}^{l}\text{trace}(\widehat{\mathcal{ A}}_{(k)})$.
\begin{theorem}\label{optimalprob}
When the sampling probabilities are chosen as
$$\pi_{i}^{Opt}=\frac{\sqrt{\frac{1}{l}\sum_{k=1}^{l}\left(1-\|\widehat{\mathcal{U}_{\mathcal{X}}}_{(i,:,k)}\|_{2}^{2}\right)\|\widehat{\mathcal{ X}}_{(i,:,k)}\|_{2}^2}}{\sum_{i=1}^{n}\sqrt{\frac{1}{l}\sum_{k=1}^{l}\left(1-\|\widehat{\mathcal{U}_{\mathcal{X}}}_{(i,:,k)}\|_{2}^{2}\right)\|\widehat{\mathcal{ X}}_{(i,:,k)}\|_{2}^2}},\quad\text{for all } i\in[n],$$
then $\text{trace}(\mathbf{Var}[\widetilde{\overrightarrow{\mathcal{B}}}_{\mathcal{W}}])$ attains its minimum.
\end{theorem}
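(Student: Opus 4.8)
The plan is to exploit the block-diagonalization of the t-product under the discrete Fourier transform so that the tensor trace collapses to an ordinary scalar minimization. Since the first term $\sigma^{2}(\mathcal{X}^{T}*\mathcal{X})^{-1}$ of the approximate variance does not depend on $\{\pi_{i}\}_{i=1}^{n}$, I would discard it at once and work only with the second (sandwich) term. Using the definition $\text{trace}(\mathcal{A})=\frac{1}{l}\sum_{k=1}^{l}\text{trace}(\widehat{\mathcal{A}}_{(k)})$, and writing $A_{k}:=\widehat{\mathcal{X}}_{(k)}$ and $D_{k}:=\text{diag}\big(\widehat{\mathbf{d}_{1}}_{(k)},\dots,\widehat{\mathbf{d}_{n}}_{(k)}\big)$ for the Fourier slices of the f-diagonal middle factor (recall that $\widehat{\mathbf{1}}_{(k)}=1$ for every $k$), the objective becomes
\begin{align*}
\text{trace}\big(\mathbf{Var}[\widetilde{\overrightarrow{\mathcal{B}}}_{\mathcal{W}}]\big)
=\text{const}+\frac{\sigma^{2}}{\tau l}\sum_{k=1}^{l}\text{trace}\!\left((A_{k}^{H}A_{k})^{-1}A_{k}^{H}D_{k}A_{k}(A_{k}^{H}A_{k})^{-1}\right),
\end{align*}
where the transpose $(\cdot)^{T}$ in the t-product corresponds to the conjugate transpose $(\cdot)^{H}$ in the Fourier domain, so that $\widehat{\mathcal{X}^{T}*\mathcal{X}}_{(k)}=A_{k}^{H}A_{k}$.

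The decisive step, which I expect to be the main obstacle, is to reduce this sum to a \emph{separable} form $\sum_{i=1}^{n}c_{i}/\pi_{i}$. First I would plug in the thin t-SVD $\mathcal{X}=\mathcal{U}_{\mathcal{X}}*\Sigma_{\mathcal{X}}*\mathcal{V}_{\mathcal{X}}^{T}$, giving $A_{k}=U_{k}\Sigma_{k}V_{k}^{H}$ with $U_{k}=\widehat{\mathcal{U}_{\mathcal{X}}}_{(k)}$. A short computation then shows that the $k$-th Fourier slice of the leverage tubal scalar collapses,
\begin{align*}
A_{k}[i,:]\,(A_{k}^{H}A_{k})^{-1}A_{k}[i,:]^{H}=U_{k}[i,:]\,U_{k}[i,:]^{H}=\big\|\widehat{\mathcal{U}_{\mathcal{X}}}_{(i,:,k)}\big\|_{2}^{2},
\end{align*}
so that $(D_{k})_{ii}=\frac{1}{\pi_{i}}\big(1-\|\widehat{\mathcal{U}_{\mathcal{X}}}_{(i,:,k)}\|_{2}^{2}\big)$. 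The remaining work is to carry the $\pi$-dependent slice-trace through the cyclic invariance of the trace and extract the $i$-th diagonal entry; the careful bookkeeping of the $(A_{k}^{H}A_{k})^{-1}$ factors against $A_{k},A_{k}^{H}$ is exactly where the correct weight $[A_{k}A_{k}^{H}]_{ii}=\|A_{k}[i,:]\|_{2}^{2}=\|\widehat{\mathcal{X}}_{(i,:,k)}\|_{2}^{2}$ must emerge. I expect this to yield
\begin{align*}
\text{trace}\big(\mathbf{Var}[\widetilde{\overrightarrow{\mathcal{B}}}_{\mathcal{W}}]\big)=\text{const}+\frac{\sigma^{2}}{\tau}\sum_{i=1}^{n}\frac{c_{i}}{\pi_{i}},
\qquad
c_{i}=\frac{1}{l}\sum_{k=1}^{l}\big(1-\|\widehat{\mathcal{U}_{\mathcal{X}}}_{(i,:,k)}\|_{2}^{2}\big)\,\|\widehat{\mathcal{X}}_{(i,:,k)}\|_{2}^{2}\ge 0,
\end{align*}
the summand being nonnegative because each $\|\widehat{\mathcal{U}_{\mathcal{X}}}_{(i,:,k)}\|_{2}^{2}$ is a diagonal entry of a projection and hence lies in $[0,1]$.

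Once the trace is in the form $\sum_{i}c_{i}/\pi_{i}$, the optimization is routine and I would finish it by either Lagrange multipliers or the Cauchy--Schwarz inequality. With the constraint $\sum_{i=1}^{n}\pi_{i}=1$ and $\pi_{i}>0$, Cauchy--Schwarz gives
\begin{align*}
\Big(\sum_{i=1}^{n}\sqrt{c_{i}}\Big)^{2}=\Big(\sum_{i=1}^{n}\frac{\sqrt{c_{i}}}{\sqrt{\pi_{i}}}\sqrt{\pi_{i}}\Big)^{2}\le\Big(\sum_{i=1}^{n}\frac{c_{i}}{\pi_{i}}\Big)\Big(\sum_{i=1}^{n}\pi_{i}\Big)=\sum_{i=1}^{n}\frac{c_{i}}{\pi_{i}},
\end{align*}
with equality if and only if $\pi_{i}\propto\sqrt{c_{i}}$. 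Normalizing and substituting the expression for $c_{i}$ produces exactly $\pi_{i}^{Opt}$, and the strict convexity of each $\pi_{i}\mapsto c_{i}/\pi_{i}$ confirms this stationary point is the global minimizer. The only genuinely delicate point in the whole argument is the trace reduction in the middle paragraph; everything before and after it is bookkeeping or a one-line inequality.
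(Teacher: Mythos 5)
There is a genuine gap, and it sits exactly at the step you yourself flag as the delicate one. If you keep the full sandwich $(A_{k}^{H}A_{k})^{-1}A_{k}^{H}D_{k}A_{k}(A_{k}^{H}A_{k})^{-1}$ and take its trace, cyclicity gives $\text{trace}\big(D_{k}\,A_{k}(A_{k}^{H}A_{k})^{-2}A_{k}^{H}\big)$, and with $A_{k}=U_{k}\Sigma_{k}V_{k}^{H}$ one finds $A_{k}(A_{k}^{H}A_{k})^{-2}A_{k}^{H}=U_{k}\Sigma_{k}^{-2}U_{k}^{H}$. Hence the diagonal weight multiplying $(D_{k})_{ii}$ is $\big[U_{k}\Sigma_{k}^{-2}U_{k}^{H}\big]_{ii}=\|\Sigma_{k}^{-1}U_{k}[i,:]^{H}\|_{2}^{2}$, which carries the \emph{inverse} singular values, not $\big[A_{k}A_{k}^{H}\big]_{ii}=\|U_{k}[i,:]\Sigma_{k}\|_{2}^{2}=\|\widehat{\mathcal{X}}_{(i,:,k)}\|_{2}^{2}$ as you assert. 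Your reduction would therefore produce a separable objective $\sum_{i}\tilde{c}_{i}/\pi_{i}$ with $\tilde{c}_{i}=\frac{1}{l}\sum_{k}\big(1-\|\widehat{\mathcal{U}_{\mathcal{X}}}_{(i,:,k)}\|_{2}^{2}\big)\,\|\widehat{\Sigma_{\mathcal{X}}}_{(k)}^{-1}\widehat{\mathcal{U}_{\mathcal{X}}}_{(i,:,k)}^{H}\|_{2}^{2}$, and the Cauchy--Schwarz step would then output $\pi_{i}\propto\sqrt{\tilde{c}_{i}}$, which is not the $\pi_{i}^{Opt}$ of the theorem. The claimed emergence of the weight $\|\widehat{\mathcal{X}}_{(i,:,k)}\|_{2}^{2}$ from "careful bookkeeping" of the outer factors is not a bookkeeping issue; it is arithmetically false.

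The paper avoids this by a different (and admittedly cruder) first move: it discards the two outer $(\mathcal{X}^{T}*\mathcal{X})^{-1}$ factors at the outset, on the grounds that they are fixed, T-symmetric T-positive semidefinite, and independent of $\{\pi_{i}\}$, and it minimizes only $\text{trace}(\mathcal{N})$ with $\mathcal{N}=\mathcal{X}^{T}*\text{diag}_{t}(\cdot)*\mathcal{X}$. With the sandwich removed, $\text{trace}(\widehat{\mathcal{N}}_{(k)})=\sum_{i}(D_{k})_{ii}\|\widehat{\mathcal{X}}_{(i,:,k)}\|_{2}^{2}$ and the stated weight falls out immediately; the remainder of the argument (insert $\sum_{i}\pi_{i}=1$, apply H\"older/Cauchy--Schwarz, equality iff $\pi_{i}\propto\sqrt{c_{i}}$) coincides with your final paragraph and is correct. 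To repair your proposal you must either adopt the paper's reduction to $\text{trace}(\mathcal{N})$ explicitly, or accept that the exact minimizer of the full sandwiched trace is a different distribution than the one the theorem states.
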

\begin{remark}
We refer to the Subsampling TLS method with the optimal subsampling probability distribution as STLS-Opt, and let $\widetilde{\overrightarrow{\mathcal{B}}}_{\text{Opt}}$ be the output calculated by the STLS-Opt method.
\end{remark}

 Although $\mathbf{Var}[\widetilde{\overrightarrow{\mathcal{B}}}_{\text{Opt}}]$ is smaller than $\mathbf{Var}[\widetilde{\overrightarrow{\mathcal{B}}}_{\text{Unif}}]$ and $\mathbf{Var}[\widetilde{\overrightarrow{\mathcal{B}}}_{\text{Lev}}]$, the form of $\pi_{i}^{\text{Opt}}$ is a little more complicated than that of $\pi_{i}^{\text{Unif}}$ and $\pi_{i}^{\text{Lev}}$. As done in \cite{ma2014statistical}, we next present another formally simpler probability distribution to address the issue that $\mathbf{Var}[\widetilde{\overrightarrow{\mathcal{B}}}_{\text{Unif}}]$ and $\mathbf{Var}[\widetilde{\overrightarrow{\mathcal{B}}}_{\text{Lev}}]$ may be too large. Specifically, the sampling probabilities are de?ned based on $\pi_{i}^{Unif}$ and $\pi_{i}^{Lev}$ as
		 $$\pi_{i}^{Slev}= \alpha\pi_{i}^{Lev}+(1-\alpha)\pi_{i}^{Unif}, \quad\text{for all } i\in[n],$$
		 where $\alpha\in(0,1)$. We call it the shrinked leverage score sampling probability distribution, and refer to the Subsampling TLS method with such  probability distribution as STLS-Slev.
\begin{remark}\label{opt_Slev}
Since for all $i\in[n]$,
\begin{align*}
  	\pi_{i}^{Slev}&=\alpha\pi_{i}^{Lev}+(1-\alpha)\pi_{i}^{Unif}
 \geq\left(\alpha+\frac{(1-\alpha)l}{\mu(\mathcal{U})}\right)\frac{\|\mathcal{U}_{(i,:,:)}\|_{F}^{2}}{p},
  	\end{align*}
   this probability distribution also satisfies the assumption in Theorem \ref{rel_opt}. Combining with Remark \ref{re_opt}, we have
   that $\tau^{Lev}<\tau^{Slev}<\tau^{Unif}$. In addition, when $1-\alpha$ is not too large, e.g., $1-\alpha=0.1$, $\tau^{Slev}$ is only a little larger than $\tau^{Lev}$.
   \end{remark}
  \begin{remark}\label{sta_Slev}
  Let $\widetilde{\overrightarrow{\mathcal{B}}}_{\text{Slev}}$ be the output calculated by the STLS-Slev method.
  According to Lemma \ref{exvar_sta}, we can also get the the conditional and unconditional expectations and variances of $\widetilde{\overrightarrow{\mathcal{B}}}_{\text{Slev}}$. However, they are much messier and less transparent than those of $\widetilde{\overrightarrow{\mathcal{B}}}_{\text{Unif}}$ and $\widetilde{\overrightarrow{\mathcal{B}}}_{\text{Lev}}$, so we will not give them explicitly and just highlight a few key points. First, when $1-\alpha$ is not extremely small, e.g., $1-\alpha=0.1$, then none of the shrinked leverage score sampling probabilities is too small, and thus $\mathbf{Var}[\widetilde{\overrightarrow{\mathcal{B}}}_{\text{Slev}}]$  will not be inflated too much. Second, when $1-\alpha$ is not too large, e.g. $1-\alpha=0.1$, then $\mathbf{Var}[\widetilde{\overrightarrow{\mathcal{B}}}_{\text{Slev}}]$ has a scale of $p/\tau$ rather than $n/\tau$. Consequently, $\widetilde{\overrightarrow{\mathcal{B}}}_{\text{Slev}}$ benefits from both $\widetilde{\overrightarrow{\mathcal{B}}}_{\text{Unif}}$ and $\widetilde{\overrightarrow{\mathcal{B}}}_{\text{Lev}}$.
\end{remark}

 \section{Numerical experiments}\label{sub_ex}
In this section, we use numerical experiments to test our method and verify our theoretical results. All experiments are carried out by using MATLAB R2021b on a standard MacBook Pro 2021 with an Apple M1 pro processor and 16GB memory.
\subsection{Simulation data}
We generate the data for the model (\ref{linearmodel}) with $n=5000$, $p=10$, and $l=10$ in the following way. The elements of $\overrightarrow{\mathcal{E}}$ are drawn i.i.d. from $N(0, 9)$, and the frontal slices of $\mathcal{ X}$, i.e., $\mathcal{ X}_{(k)}$ for all $k\in[l]$, are generated from the following distributions:
 \begin{itemize}
	\item Multivariate normal distribution $\text{N}(\mathbf{1}_{p},\Sigma)$, where $\mathbf{1}_{p}$ is a $p\times 1$ column vector of ones and the $(i,j)$-th element of $\Sigma$ is set to $2\times 0.5^{\vert i-j\vert}$, for $i,j=1,\cdots,p$. We refer to this as MN data.
	\item Multivariate t-distribution with $3$ degrees of freedom and the covariance matrix $\Sigma$ mentioned above. We refer to this as T3 data.
	\item Multivariate t-distribution with $1$ degrees of freedom and the covariance matrix $\Sigma$ mentioned above. We refer to this as T1 data.
\end{itemize}
And, we take ${\overrightarrow{\mathcal{B}}_{0}}_{(k)}=(\mathbf{1}_{2}, 0.1\times\mathbf{1}_{p-4}, \mathbf{1}_{2})$ for all $k\in[l]$ and set $\overrightarrow{\mathcal{Y}}=\mathcal{X}*\overrightarrow{\mathcal{B}}_{0}+\overrightarrow{\mathcal{E}}$.

In our experiments, we use the optimization and statistical criteria to evaluate each estimator. Speci?cally, let $\widetilde{\overrightarrow{\mathcal{B}}}_{b}$ be the estimator in the $b$-th replicated sample and $\text{SM}(\widetilde{\overrightarrow{\mathcal{B}}})=\frac{1}{\text{500}}\sum_{\text{b}=1}^{\text{500}}\widetilde{\overrightarrow{\mathcal{B}}}_{b}$ be the sample mean of $\widetilde{\overrightarrow{\mathcal{B}}}_{b}$ across $500$ replicated samples.

 \begin{itemize}
	\item Optimization criteria include sample mean relative function value (SMRFV) and sample mean relative error (SMRE), that is,
  $$\text{SMRFV}(\widetilde{\overrightarrow{\mathcal{B}}})=\frac{1}{500}\sum_{b=1}^{500} \frac{\lvert f(\widetilde{\overrightarrow{\mathcal{B}}}_{b})-f(\widetilde{\overrightarrow{\mathcal{B}}}_{\text{ols}})\rvert}{f(\widetilde{\overrightarrow{\mathcal{B}}}_{\text{ols}})};\quad \text{SMRE}(\widetilde{\overrightarrow{\mathcal{B}}})=\frac{1}{500}\sum_{b=1}^{500}\frac{\|\widetilde{\overrightarrow{\mathcal{B}}}_{b}-\widetilde{\overrightarrow{\mathcal{B}}}_{\text{ols}}\|_{F}^2}{\|\widetilde{\overrightarrow{\mathcal{B}}}_{\text{ols}}|_{F}^2}.$$
\item Statistical criteria include sample squared bias (SSB), sample variance (SV) and sample mean square error (SMSE), that is,
$$\text{SSB}(\widetilde{\overrightarrow{\mathcal{B}}})=\|\text{SM}(\widetilde{\overrightarrow{\mathcal{B}}})-\overrightarrow{\mathcal{B}}_{0}\|_{F}^{2};\quad \text{SV}(\widetilde{\overrightarrow{\mathcal{B}}})=\frac{1}{500}\sum_{b=1}^{500}\|\widetilde{\overrightarrow{\mathcal{B}}}_{b}-\text{SM}(\widetilde{\overrightarrow{\mathcal{B}}})\|_{F}^{2};$$
$$\text{SMSE}(\widetilde{\overrightarrow{\mathcal{B}}})=\frac{1}{500}\sum_{b=1}^{500}\|\widetilde{\overrightarrow{\mathcal{B}}}_{b}-\overrightarrow{\mathcal{B}}_{0}\|_{F}^{2}.$$
\end{itemize}

 \begin{example}\label{ex1}\rm
 Consider that the TLS (\ref{TLS}) problem can be transformed into the following MLS problem:
 	\begin{align}\label{MLS}
 		\text{bcirc}(\mathcal{ X})\text{unfold}(\overrightarrow{\mathcal{B}})=\text{unfold}(\overrightarrow{\mathcal{Y}}),
 	\end{align}
 	and it can be solved by the Subsampling MLS method. In this example, we compare the empirical performance of the Subsampling TLS method for (\ref{TLS}) and the Subsampling MLS method for (\ref{MLS}). Here, we only consider uniform sampling and leverage score sampling. Specifically, we compare the performance of the  STLS-Unif and STLS-Lev methods with sample size  $\tau=200:100:1000$ and the SMLS-Unif and SMLS-Lev methods with sample size $\tau$ or $l\tau=10\tau$. The numerical results are provided in Figures \ref{Unif_TLS_MLS} and \ref{Lev_TLS_MLS}, from which we can see that for MN, T3 and T1 data, the STLS-Unif and STLS-Lev methods perform much better than the corresponding SMLS-Unif and SMLS-Lev methods 
 	for the same sample size. If the sample size of the SMLS-Unif and SMLS-Lev methods is set to be $l=10$ times that of the STLS-Unif and STLS-Lev methods, they perform almost the same as expected. However, the STLS-Unif and STLS-Lev methods require much less time. Therefore, the Subsampling TLS method for (\ref{TLS}) indeed has more advantages compared with 
 	the Subsampling MLS method for (\ref{MLS}).
\begin{figure}[htbp]
	\centering
	\includegraphics[width=0.85 \textwidth ]{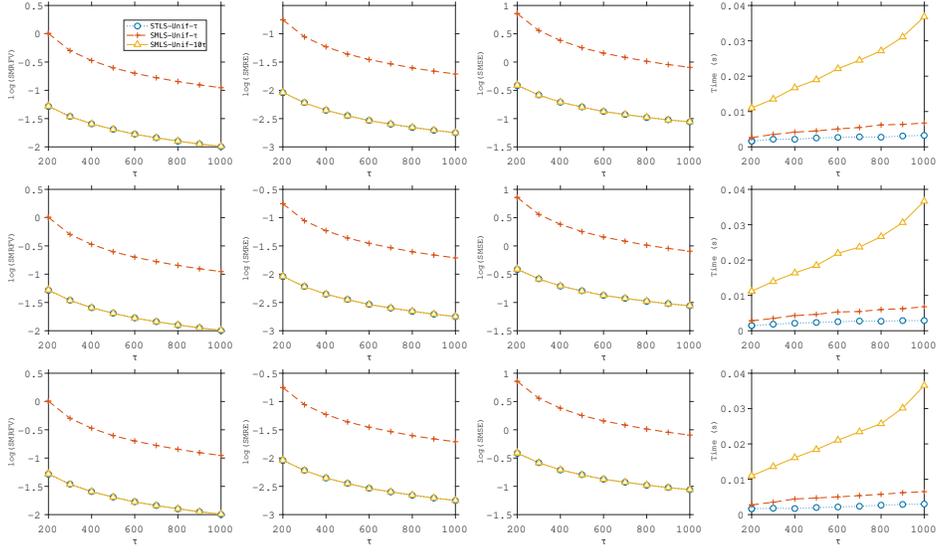}
	\caption{Comparison of the performance of the STLS-Unif method with sample size $\tau=200:100:1000$ (STLS-Unif-$\tau$) and the SMLS-Unif method with sample size $\tau$ (SMLS-Unif-$\tau$) or $10\tau$ (SMLS-Unif-$10\tau$). The three row panels correspond to MN, T3, and T1 data, respectively.  }\label{Unif_TLS_MLS}
\end{figure}
\begin{figure}[htbp]
	\centering
	\includegraphics[width=0.85 \textwidth]{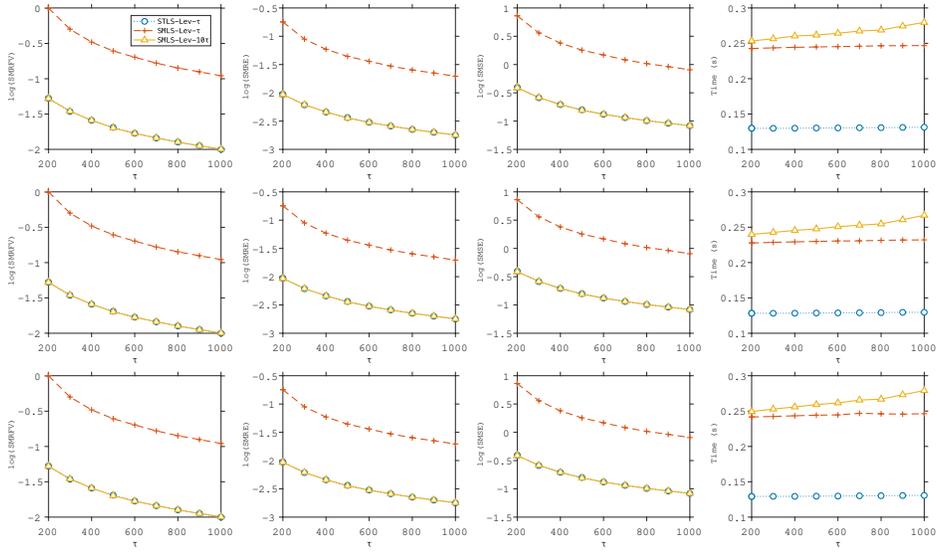}
	\caption{Comparison of the performance of the STLS-Lev method with sample size $\tau=200:100:1000$ (STLS-Lev-$\tau$) and the SMLS-Lev method with sample size $\tau$ (SMLS-Lev-$\tau$)  or $10\tau$ (SMLS-Lev-$10\tau$). The three row panels correspond to MN, T3, and T1 data, respectively.}\label{Lev_TLS_MLS}
\end{figure}
 \end{example}

 \begin{example}\rm
From the definition of the shrinked leverage score sampling probabilities, we know that the parameter $\alpha$ a?ects the behavior of the STLS-Slev method to some extent. With this in mind, we demonstrate in this example how the specific effect of the parameter $\alpha$ is when the STLS-Slev method is used for different data. We set the sample size $\tau=3p, 5p, 10p$ and $\alpha=0:0.1:1$.  From Figure \ref{alpha}, the following can be seen. First, for MN and T3 data, $\alpha$ has an irregular impact on the SSB value of the STLS-Slev method. However, these values are much smaller compared with the corresponding ones of SV. So the final impact on the STLS-Slev method is limited. Second, for MN data, the SMRFV, SMRE, SV and SMSE values of the STLS-Slev method with different $\alpha$ are almost the same, and for T3 data, $\alpha$ has very subtle influence on the SMRFV, SMRE, SV and SMSE values.
Third, for T1 data, these curves are U-shaped, which indicates that the STLS-Slev method first improves as $\alpha$ increases, but gradually degrades as $\alpha$ keeps growing. 
That is, $\alpha$ cannot be set too large or too small, which is consistent with Remarks \ref{opt_Slev} and \ref{sta_Slev}. Therefore, we will set $\alpha=0.9$ in the following  examples.

\begin{figure}[htbp]
	\centering
	\includegraphics[width=0.9 \textwidth]{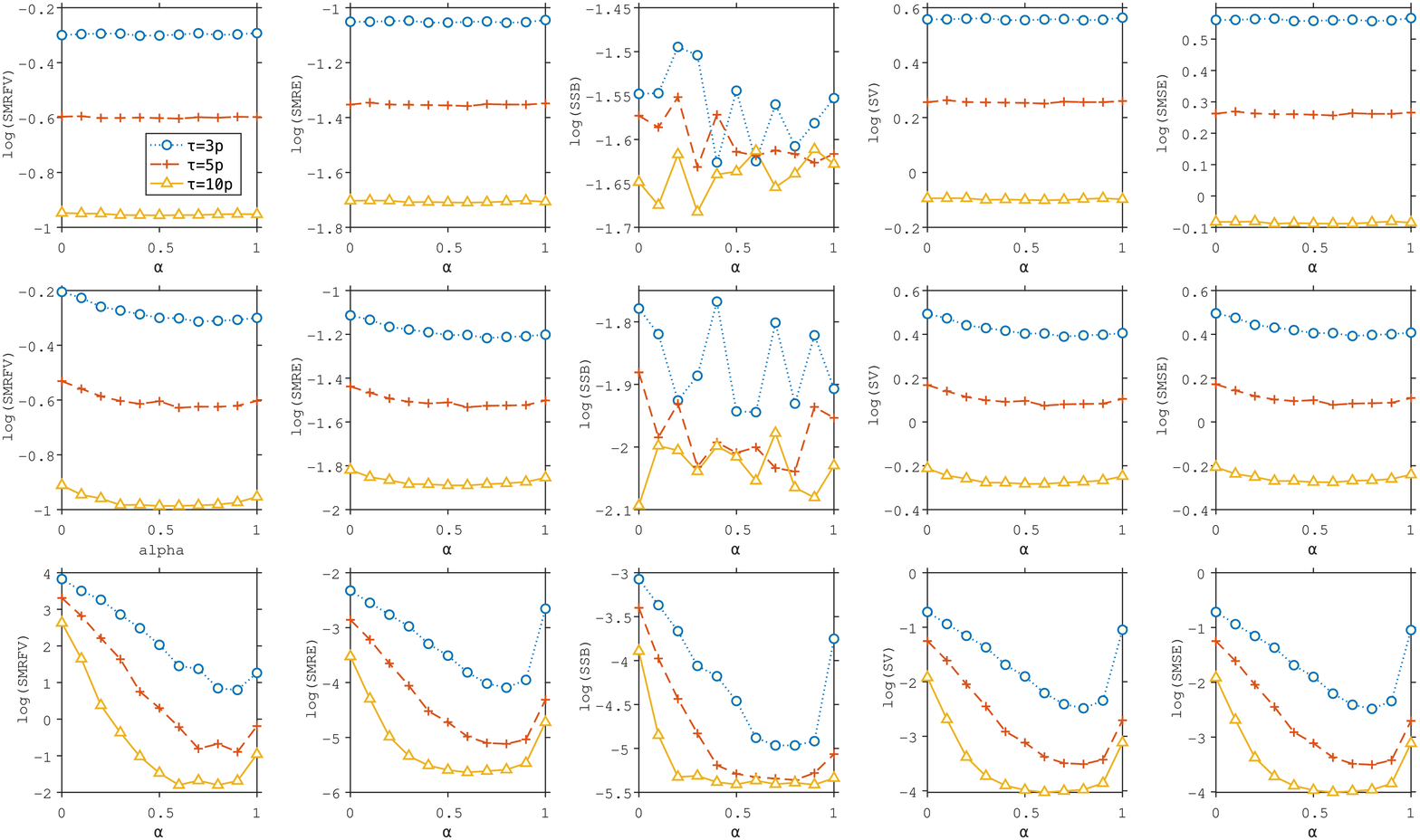}
	\caption{The effect of $\alpha$ in the STLS-Slev method. The three row panels correspond to MN, T3, and T1 data, respectively. }\label{alpha}
\end{figure}
\end{example}

 \begin{example}\label{ex3}\rm
 	In this example, we  compare the empirical performance of the Subsampling TLS method with different sampling probability distributions, including the STLS-Unif, STLS-Lev, STLS-Slev and STLS-Opt methods. We set the sample size $\tau=200:100:1000$. According to Figure \ref{compare_method}, there are numerous points worth noting. First, in terms of the SMRFV, SMRE, SV and SMSE values, the four methods is very similar for MN data, less similar for T3 data, and very different for T1 data. This is mainly because the leverage scores are very uniform for MN data and very nonuniform for T1 data. Furthermore, when they are different, the STLS-Opt method performs the best, and the STLS-Slev method is  superior to the STLS-Unif and STLS-Lev methods, which is consistent with the fact that the STLS-Slev method benefits from both STLS-Unif and STLS-Lev methods. Second, as the sample size $\tau$ increases, the SMRFV, SMRE, SV and SMSE values of the four methods all gradually decrease, which veri?es the fact that these values are all inversely proportional to the sample size $\tau$. Third, SV obviously dominates SSB, which is consistent with the fact that the estimator calculated by the Subsampling TLS method is approximately unbiased and the squared bias is proved to be negligible with respective to the variance.
 	\begin{figure}[htbp]
 		\centering
 		\includegraphics[width=0.9 \textwidth]{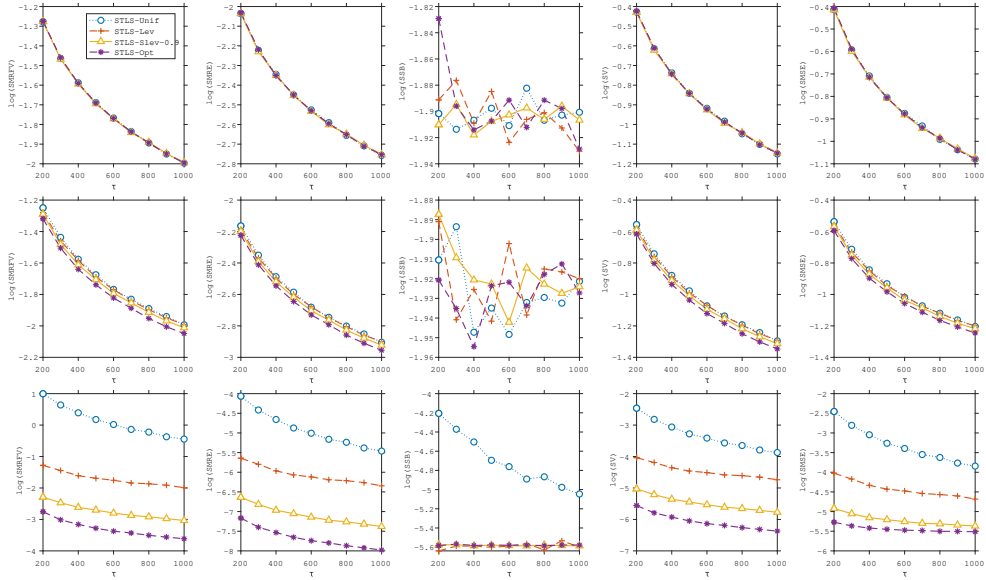}
 		\caption{Comparison of the performance of the STLS-Unif, STLS-Lev, STLS-Slev and STLS-Opt methods on simulation data. The three row panels correspond to MN, T3, and T1 data, respectively.}\label{compare_method}
 	\end{figure}

\end{example}
\subsection{Real data}
 \begin{example} \rm
 In this example, we evaluate the performance of the STLS-Unif, STLS-Lev, STLS-Slev and STLS-Opt methods on the Air Quality data set \cite{AirQuality}. Our primary interest here is to analyze the relationships between some air pollutants. Specifically, we ?t model (\ref{linearmodel}) with the true hourly averaged Benzene concentration as the response and the true hourly averaged NO and $\text{NO}^2$ concentrations as the predictors. Since each of these three has $9357$ hours of recordings, we may obtain three tubal vectors of size $1559\times1\times6$ by treating the six consecutive hours of recordings as a tubal scalar of length $6$ and removing the last three recordings. We randomly divide the obtained $1559$ samples into $1403$ for training and $156$ for testing and set the sample size $\tau=100:50:500$. All data is standardized before being fed into the model. Since the true solution $\overrightarrow{\mathcal{B}}_{0}$ for the real dataset is unknown, we use the least squares estimator $\widetilde{\overrightarrow{\mathcal{B}}}_{\text{ols}}$ in the statistical criteria instead. Inspecting in  Figure \ref{air}, we get similar observations to Example \ref{ex3}. First, in terms of the SMRFV, SMRE, SV and SMSE values, the STLS-Opt method is optimal and the STLS-Slev method is better than the STLS-Unif and STLS-Lev methods. Second, the SMRFV, SMRE, SV and SMSE values (both estimations and predictions) of the four methods gradually decrease as the $\tau$ increases. Third, the bias e?ect is small enough to be ignored compared to the variance.
 	 	\begin{figure}[htbp]
 		\centering
 		\includegraphics[ width=0.9 \textwidth]{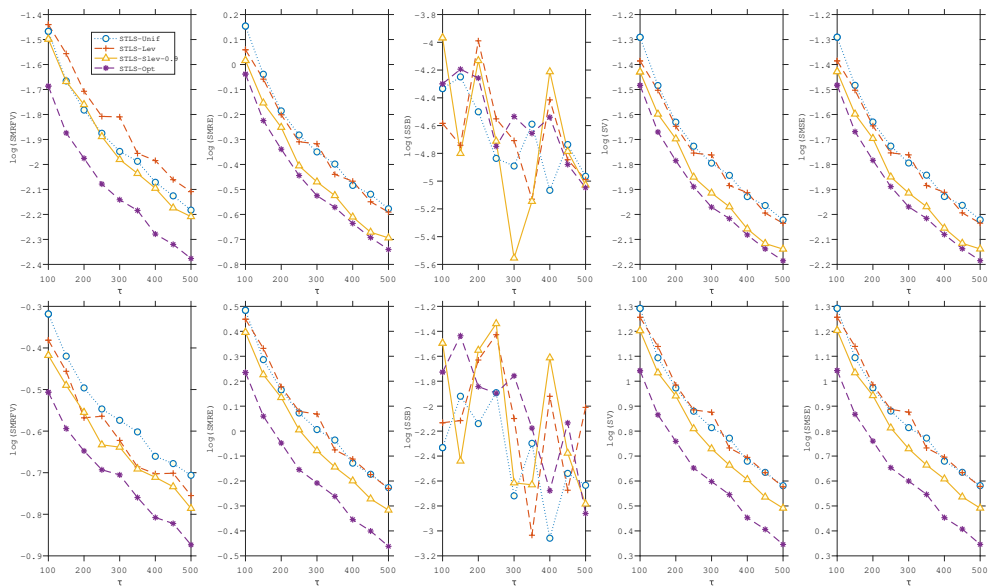}
 		\caption{Comparison of the performance of the STLS-Unif, STLS-Lev, STLS-Slev and STLS-Opt methods on the Air Quality data set. The two row panels correspond to the estimations and predictions, respectively.}\label{air}
 	\end{figure}
 \end{example}

\section{Conclution}
	In this work, we propose the Subsampling TLS method for solving tensor least squares. In theory, we analyze the quality of the proposed method from both optimization and statistical perspectives in details. Numerical results verify the feasibility and effectiveness of the proposed method 
 and the correctness of the theoretical results.
\bibliography{mybibfile}

\appendix
\section{Proofs for theoretical results\label{app1}}
\begin{proof}[Proof of Lemma~{\upshape\ref{RanMutil}}] \rm
	Let $\mathcal{M}=\mathcal{D}*\mathcal{S}^{T}$. Fix $i$, $j$ and define $$H_{t}=\left(\frac{\widehat{\mathcal{X}}_{(:,i_{t},k)}\widehat{\mathcal{Y}}_{(i_{t},:,k)}}{\tau\pi_{i_{t}}}\right)_{(i,j)}=\frac{\widehat{\mathcal{X}}_{(i,i_{t},k)}\widehat{\mathcal{Y}}_{(i_{t},j,k)}}{\tau\pi_{i_{t}}},$$ for $t=1,\cdots,\tau$, and it is easy to verify that
	$$\mathbf{E}[H_{t}]=\sum_{q\in[n]}\pi_{q}\left(\frac{\widehat{\mathcal{X}}_{(i,q,k)}\widehat{\mathcal{Y}}_{(q,j,k)}}{\tau\pi_{q}}\right)=\frac{1}{\tau}(\widehat{\mathcal{X}}_{(k)} \widehat{\mathcal{Y}}_{(k)})_{(i,j)},$$
	and
	$$\mathbf{E}[H_{t}^{2}]=\sum_{q\in[n]}\pi_{q}\left(\frac{\widehat{\mathcal{X}}_{(i,q,k)}\widehat{\mathcal{Y}}_{(q,j,k)}}{\tau\pi_{q}}\right)^{2}=\sum_{q\in[n]}\frac{\widehat{\mathcal{X}}_{(i,q,k)}^{2}\widehat{\mathcal{Y}}_{(q,j,k)}^{2}}{\tau^{2}\pi_{q}}.$$
	Thus,
	\begin{align*}
		\textbf{Var} [H_{t}] =& \mathbf{E}[H_{t}^{2}]-\mathbf{E}[H_{t}]^{2}
		=\sum_{q\in[n]}\frac{\widehat{\mathcal{X}}_{(i,q,k)}^{2}\widehat{\mathcal{Y}}_{(q,j,k)}^{2}}{\tau^{2}\pi_{q}}-\frac{1}{\tau^{2}}(\widehat{\mathcal{X}}_{(k)} \widehat{\mathcal{Y}}_{(k)})_{(i,j)}^{2}.
	\end{align*}
	Since $(\widehat{\mathcal{X}}_{(k)}\widehat{\mathcal{M}}_{(k)}^{H}\widehat{\mathcal{M}}_{(k)}\widehat{\mathcal{Y}}_{(k)})_{(i,j)}=\sum_{t\in[\tau]}H_{t}$, which is the sum of $\tau$ independent random variables, we have that
	$$\mathbf{E}[(\widehat{\mathcal{X}}_{(k)}\widehat{\mathcal{M}}_{(k)}^{H}\widehat{\mathcal{M}}_{(k)}\widehat{\mathcal{Y}}_{(k)})_{(i,j))}]=\sum_{t\in[\tau]}\mathbf{E}[H_{t}]=(\widehat{\mathcal{X}}_{(k)} \widehat{\mathcal{Y}}_{(k)})_{(i,j)},$$
	and
	\begin{align*}
		\textbf{Var} [(\widehat{\mathcal{X}}_{(k)}\widehat{\mathcal{M}}_{(k)}^{H}\widehat{\mathcal{M}}_{(k)}\widehat{\mathcal{Y}}_{(k)})_{(i,j)}] &= \sum_{t\in[\tau]} \textbf{Var} [H_{t}]
		=\sum_{q\in[n]}\frac{\widehat{\mathcal{X}}_{(i,q,k)}^{2}\widehat{\mathcal{Y}}_{(q,j,k)}^{2}}{\tau\pi_{q}}-\frac{1}{\tau}(\widehat{\mathcal{X}}_{(k)} \widehat{\mathcal{Y}}_{(k)})_{(i,j)}^{2}\\
		&\leq\sum_{q\in[n]}\frac{\widehat{\mathcal{X}}_{(i,q,k)}^{2}\widehat{\mathcal{Y}}_{(q,j,k)}^{2}}{\tau\pi_{q}}.
	\end{align*}
	Therefore,
	\begin{align*}
		&\mathbf{E}[\|\mathcal{X}*\mathcal{Y}-\mathcal{X}*\mathcal{M}^{T}*\mathcal{M}*\mathcal{Y}\|_{F}^{2}]
		=\frac{1}{l}\sum_{k=1}^{l}\mathbf{E}[\|\widehat{\mathcal{X}}_{(k)}\widehat{\mathcal{Y}}_{(k)}-\widehat{\mathcal{X}}_{(k)}\widehat{\mathcal{M}}_{(k)}^{H}\widehat{\mathcal{M}}_{(k)}\widehat{\mathcal{Y}}_{(k)}\|_{F}^{2}]\\
		=&\frac{1}{l}\sum_{k=1}^{l}\sum_{i\in[m]}\sum_{j\in[p]}\mathbf{E}[(\widehat{\mathcal{X}}_{(k)}\widehat{\mathcal{Y}}_{(k)}-\widehat{\mathcal{X}}_{(k)}\widehat{\mathcal{M}}_{(k)}^{H}\widehat{\mathcal{M}}_{(k)}\widehat{\mathcal{Y}}_{(k)})_{(i,j)}^{2}]\\
		=&\frac{1}{l}\sum_{k=1}^{l}\sum_{i\in[m]}\sum_{j\in[p]}\textbf{Var} [(\widehat{\mathcal{X}}_{(k)}\widehat{\mathcal{M}}_{(k)}^{H}\widehat{\mathcal{M}}_{(k)}\widehat{\mathcal{Y}}_{(k)})_{(i,j)}]
		\leq \frac{1}{l}\sum_{k=1}^{l}\sum_{i\in[m]}\sum_{j\in[p]}\sum_{q\in[n]}\frac{\widehat{\mathcal{X}}_{(i,q,k)}^{2}\widehat{\mathcal{Y}}_{(q,j,k)}^{2}}{\tau\pi_{q}}\\
		=&\frac{1}{l}\sum_{k=1}^{l}\sum_{q\in[n]}\frac{\|\widehat{\mathcal{X}}_{(:,q,k)}\|_{2}^{2}\|\widehat{\mathcal{Y}}_{(q,:,k)}\|_{2}^{2}}{\tau\pi_{q}}
		\leq\frac{1}{\tau}\sum_{q\in[n]}\frac{l}{\pi_{q}}\left(\frac{1}{l}\sum_{k=1}^{l}\|\widehat{\mathcal{X}}_{(:,q,k)}\|_{2}^{2}\right)\left(\frac{1}{l}\sum_{k=1}^{l}\|\widehat{\mathcal{Y}}_{(q,:,k)}\|_{2}^{2}\right)\\
		=&\frac{1}{\tau}\sum_{q\in[n]}\frac{l}{\pi_{q}}\|\mathcal{X}_{(:,q,:)}\|_{F}^{2}\|\mathcal{Y}_{(q,:,:)}\|_{F}^{2}
		\leq\frac{l}{\beta \tau}\|\mathcal{X}\|_{F}^{2}\|\mathcal{Y}\|_{F}^{2}.
	\end{align*}
\end{proof}

\begin{proof}[Proof of Lemma~{\upshape\ref{fullrank}}] \rm
	Let $\mathcal{M}=\mathcal{D}*\mathcal{S}^{T}$. First, we prove (\ref{fullrank_1}). By Markov's inequality and Lemma \ref{RanMutil}, we obtain that with probability at least $0.9$:
	\begin{align*}
		\|\mathcal{U}_{\mathcal{X}}^{T}*\mathcal{U}_{\mathcal{X}}-\mathcal{U}_{\mathcal{X}}^{T}*\mathcal{M}^{T}*\mathcal{M}*\mathcal{U}_{\mathcal{X}}\|_{F}^2&\leq10\mathbf{E}[\|\mathcal{U}_{\mathcal{X}}^{T}*\mathcal{U}_{\mathcal{X}}-\mathcal{U}_{\mathcal{X}}^{T}*\mathcal{M}^{T}*\mathcal{M}*\mathcal{U}_{\mathcal{X}}\|_{F}^2]\\
		&\leq10\frac{l}{\beta \tau}\|\mathcal{U}_{\mathcal{X}}^{T}\|_{F}^{2}\|\mathcal{U}_{\mathcal{X}}\|_{F}^{2}=10\frac{l}{\beta \tau}p^2.
	\end{align*}
	Hence, for all $i \in [p]$ and $k\in [l]$, we have that

	\begin{align*}
   \left(1-\max_{k\in[l]}\sigma_{i}^{2}(\widehat{\mathcal{M}}_{(k)}\widehat{\mathcal{U}_{\mathcal{X}}}_{(k)})\right)^2&=\left( \lambda_{i}(\widehat{\mathcal{U}_{\mathcal{X}}}_{(k)}^{H}\widehat{\mathcal{U}_{\mathcal{X}}}_{(k)})-\max_{k\in[l]}\lambda_{i}(\widehat{\mathcal{U}_{\mathcal{X}}}_{(k)}^{H}\widehat{\mathcal{M}}_{(k)}^{H}\widehat{\mathcal{M}}_{(k)}\widehat{\mathcal{U}_{\mathcal{X}}}_{(k)})\right)^2\\
	&\leq\max_{k\in[l]}\left( \lambda_{i}(\widehat{\mathcal{U}_{\mathcal{X}}}_{(k)}^{H}\widehat{\mathcal{U}_{\mathcal{X}}}_{(k)})-\lambda_{i}(\widehat{\mathcal{U}_{\mathcal{X}}}_{(k)}^{H}\widehat{\mathcal{M}}_{(k)}^{H}\widehat{\mathcal{M}}_{(k)}\widehat{\mathcal{U}_{\mathcal{X}}}_{(k)})\right)^2\\
	&\leq \max_{k\in[l]}\|\widehat{\mathcal{U}_{\mathcal{X}}}_{(k)}^{H}\widehat{\mathcal{U}_{\mathcal{X}}}_{(k)}-\widehat{\mathcal{U}_{\mathcal{X}}}_{(k)}^{H}\widehat{\mathcal{M}}_{(k)}^{H}\widehat{\mathcal{M}}_{(k)}\widehat{\mathcal{U}_{\mathcal{X}}}_{(k)}\|_{2}^2\\
	&=\|\mathcal{U}_{\mathcal{X}}^{T}*\mathcal{U}_{\mathcal{X}}-\mathcal{U}_{\mathcal{X}}^{T}*\mathcal{M}^{T}*\mathcal{M}*\mathcal{U}_{\mathcal{X}}\|_{2}^2\\
	&\leq l \|\mathcal{U}_{\mathcal{X}}^{T}*\mathcal{U}_{\mathcal{X}}-\mathcal{U}_{\mathcal{X}}^{T}*\mathcal{M}^{T}*\mathcal{M}*\mathcal{U}_{\mathcal{X}}\|_{F}^2\\
	&\leq10\frac{l^2}{\beta \tau}p^{2}\leq\frac{\epsilon}{4}\leq\frac{1}{4},
\end{align*}
	where the penultimate inequality uses the assumed choice of $\tau$.
	Therefore, each singular tubal scalar of $\mathcal{M}*\mathcal{U}_{\mathcal{X}}$ is nonzero, which implies that (\ref{fullrank_1}) holds, that is, $\text{rank}_{t}(\mathcal{D}*\mathcal{S}^{T}*\mathcal{U}_{\mathcal{X}})=\text{rank}_{t}(\mathcal{U}_{\mathcal{X}})=\text{rank}_{t}(\mathcal{X})=p$.
	
	Similar to the proof of Lemma 3.5 in  \cite{tarzanagh2018fast}, we can prove that (\ref{fullrank_2}, (\ref{fullrank_3} and (\ref{fullrank_4} all hold. Here, we omit the details.
\end{proof}

\begin{proof}[Proof of Lemma~{\upshape\ref{multicom}}] \rm
	Let $\mathcal{M}=\mathcal{D}*\mathcal{S}^{T}$. Since $\mathcal{U}_{\mathcal{X}}^{T}*\mathcal{U}_{\mathcal{X}}^{\perp}=O$, where $O$ is zero tubal matrix, we have that
	\begin{align*}
		&\|\mathcal{U}_{\mathcal{X}}^{T}*\mathcal{M}^{T}*\mathcal{M}*\mathcal{U}_{\mathcal{X}}^{\perp}*{\mathcal{U}_{\mathcal{X}}^{\perp}}^{T}*\overrightarrow{\mathcal{Y}}\|_{F}^2=	\|\mathcal{U}_{\mathcal{X}}*\mathcal{U}_{\mathcal{X}}^{T}*\mathcal{M}^{T}*\mathcal{M}*\mathcal{U}_{\mathcal{X}}^{\perp}*{\mathcal{U}_{\mathcal{X}}^{\perp}}^{T}*\overrightarrow{\mathcal{Y}}\|_{F}^2\\
		=&\|\mathcal{U}_{\mathcal{X}}*\mathcal{U}_{\mathcal{X}}^{T}*\mathcal{U}_{\mathcal{X}}^{\perp}*{\mathcal{U}_{\mathcal{X}}^{\perp}}^{T}*\overrightarrow{\mathcal{Y}}-\mathcal{U}_{\mathcal{X}}*\mathcal{U}_{\mathcal{X}}^{T}*\mathcal{M}^{T}*\mathcal{M}*\mathcal{U}_{\mathcal{X}}^{\perp}*{\mathcal{U}_{\mathcal{X}}^{\perp}}^{T}*\overrightarrow{\mathcal{Y}}\|_{F}^2,
	\end{align*}	
which together with Markov's inequality and Lemma \ref{RanMutil} implies that with probability at least $0.9$:
	\begin{align*}
	\|\mathcal{U}_{\mathcal{X}}^{T}*\mathcal{M}^{T}*\mathcal{M}*\mathcal{U}_{\mathcal{X}}^{\perp}*{\mathcal{U}_{\mathcal{X}}^{\perp}}^{T}*\overrightarrow{\mathcal{Y}}\|_{F}^2
	\leq&10\mathbf{E}[\|\mathcal{U}_{\mathcal{X}}*\mathcal{U}_{\mathcal{X}}^{T}*\mathcal{M}^{T}*\mathcal{M}*\mathcal{U}_{\mathcal{X}}^{\perp}*{\mathcal{U}_{\mathcal{X}}^{\perp}}^{T}*\overrightarrow{\mathcal{Y}}\|_{F}^2]\\
	\leq&10\frac{l}{\beta \tau}\|\mathcal{U}_{\mathcal{X}}*\mathcal{U}_{\mathcal{X}}^{T}\|_{F}^2\|\mathcal{U}_{\mathcal{X}}^{\perp}*{\mathcal{U}_{\mathcal{X}}^{\perp}}^{T}*\overrightarrow{\mathcal{Y}}\|_{F}^2\\
	=&10\frac{l}{\beta \tau}p\|\mathcal{U}_{\mathcal{X}}^{\perp}*{\mathcal{U}_{\mathcal{X}}^{\perp}}^{T}*\overrightarrow{\mathcal{Y}}\|_{F}^2\\
	\leq&\frac{\epsilon}{4}\|\mathcal{U}_{\mathcal{X}}^{\perp}*{\mathcal{U}_{\mathcal{X}}^{\perp}}^{T}*\overrightarrow{\mathcal{Y}}\|_{F}^2,
\end{align*}	
	where the last inequality uses the assumed choice of $\tau$.
\end{proof}	
\begin{proof}[Proof of Lemma~{\upshape\ref{multicomp}}] \rm
Let $\mathcal{M}=\mathcal{D}*\mathcal{S}^{T}$, $\mathcal{Q}=\mathcal{U}_{\mathcal{X}}^{\perp}*{\mathcal{U}_{\mathcal{X}}^{\perp}}^{T}*\overrightarrow{\mathcal{Y}}$, and  $i_{1},i_{2},\cdots,i_{\tau}$ be the $\tau$ horizontal slices of $\mathcal{Q}$ that were included in $\mathcal{M}*\mathcal{Q}$. Clearly,
	\begin{align*}
		\mathbf{E}[\|\mathcal{M}*\mathcal{Q}\|_{F}^{2}]&=\mathbf{E}\left[\sum_{t=1}^{\tau}\frac{\|\mathcal{Q}_{(i_{t},:,:)}\|_{F}^{2}}{\tau \pi_{i_{t}}}\right]=\sum_{t=1}^{\tau}\mathbf{E}\left[\frac{\|\mathcal{Q}_{(i_{t},:,:)}\|_{F}^{2}}{\tau \pi_{i_{t}}}\right]=\sum_{t=1}^{\tau}\sum_{i=1}^{n}\pi_{i}\frac{\|\mathcal{Q}_{(i,:,:)}\|_{F}^{2}}{\tau \pi_{i}}=\|\mathcal{Q}\|_{F}^{2},
	\end{align*}	
	which together with  Markov's inequality leads to that
	\begin{align*}
		\|\mathcal{M}*\mathcal{U}_{\mathcal{X}}^{\perp}*{\mathcal{U}_{\mathcal{X}}^{\perp}}^{T}*\overrightarrow{\mathcal{Y}}\|_{F}^2\leq10\mathbf{E}[\|\mathcal{M}*\mathcal{Q}\|_{F}^{2}]=10\|\mathcal{U}_{\mathcal{X}}^{\perp}*{\mathcal{U}_{\mathcal{X}}^{\perp}}^{T}*\overrightarrow{\mathcal{Y}}\|_{F}^{2}
	\end{align*}	
	holds with probability as least $0.9$.
\end{proof}	

\begin{proof}[Proof of Theorem~{\upshape\ref{rel_opt}}] \rm
	Let $\mathcal{M}=\mathcal{D}*\mathcal{S}^{T}$.	If we assume that $\tau\geq40p^{2}l^{2}/\beta\epsilon'$, which makes the assumption on $\tau$ be satisfied for each of Lemmas  \ref{fullrank}, \ref{multicom} and \ref{multicomp}, then the three lemmas hold simultaneously with probability at least $1-3\times(0.1)=0.7$. By (\ref{fullrank_1}) and (\ref{fullrank_3}) of Lemma \ref{fullrank} and the fact that $\mathcal{U}_{\mathcal{X}}*\mathcal{U}_{\mathcal{X}}^{T}+\mathcal{U}_{\mathcal{X}}^{\perp}*{\mathcal{U}_{\mathcal{X}}^{\perp}}^{T}=\mathcal{I}_{n}$, we have that
	\begin{align*}
		\overrightarrow{\mathcal{Y}}-\mathcal{X}*\widetilde{\overrightarrow{\mathcal{B}}}_{\mathcal{W}}&=\overrightarrow{\mathcal{Y}}-\mathcal{X}*(\mathcal{M}*\mathcal{X})^{\dag}*\mathcal{M}*\overrightarrow{\mathcal{Y}}\\
		&=\overrightarrow{\mathcal{Y}}-\mathcal{U}_{\mathcal{X}}*\Sigma_{\mathcal{X}}*\mathcal{V}_{\mathcal{X}}^{T}*\mathcal{V}_{\mathcal{X}}*\Sigma_{\mathcal{X}}^{-1}*(\mathcal{M}*\mathcal{U}_{\mathcal{X}})^{\dag}*\mathcal{M}*\overrightarrow{\mathcal{Y}}\\
		&=\overrightarrow{\mathcal{Y}}-\mathcal{U}_{\mathcal{X}}*(\mathcal{M}*\mathcal{U}_{\mathcal{X}})^{\dag}*\mathcal{M}*\overrightarrow{\mathcal{Y}}\\
		&=\overrightarrow{\mathcal{Y}}-\mathcal{U}_{\mathcal{X}}*(\mathcal{M}*\mathcal{U}_{\mathcal{X}})^{\dag}*\mathcal{M}*\mathcal{U}_{\mathcal{X}}*\mathcal{U}_{\mathcal{X}}^{T}*\overrightarrow{\mathcal{Y}}-\mathcal{U}_{\mathcal{X}}*(\mathcal{M}*\mathcal{U}_{\mathcal{X}})^{\dag}*\mathcal{M}*\mathcal{U}_{\mathcal{X}}^{\perp}*{\mathcal{U}_{\mathcal{X}}^{\perp}}^{T}*\overrightarrow{\mathcal{Y}}\\
		&=\mathcal{U}_{\mathcal{X}}^{\perp}*{\mathcal{U}_{\mathcal{X}}^{\perp}}^{T}*\overrightarrow{\mathcal{Y}}-\mathcal{U}_{\mathcal{X}}*(\mathcal{M}*\mathcal{U}_{\mathcal{X}})^{\dag}*\mathcal{M}*\mathcal{U}_{\mathcal{X}}^{\perp}*{\mathcal{U}_{\mathcal{X}}^{\perp}}^{T}*\overrightarrow{\mathcal{Y}},
	\end{align*}	
	which together with Pythagorean theorem, submultiplicative inequality, and the fact that $\Gamma=(\mathcal{M}*\mathcal{U}_{\mathcal{X}})^{\dag}-(\mathcal{M}*\mathcal{U}_{\mathcal{X}})^{T}$ leads to
	\begin{align*}
		f(\widetilde{\overrightarrow{\mathcal{B}}}_{\mathcal{W}})=&\|\overrightarrow{\mathcal{Y}}-\mathcal{X}*\widetilde{\overrightarrow{\mathcal{B}}}_{\mathcal{W}}\|_{F}^2=\|\mathcal{U}_{\mathcal{X}}^{\perp}*{\mathcal{U}_{\mathcal{X}}^{\perp}}^{T}*\overrightarrow{\mathcal{Y}}-\mathcal{U}_{\mathcal{X}}*(\mathcal{M}*\mathcal{U}_{\mathcal{X}})^{\dag}*\mathcal{M}*\mathcal{U}_{\mathcal{X}}^{\perp}*{\mathcal{U}_{\mathcal{X}}^{\perp}}^{T}*\overrightarrow{\mathcal{Y}}\|_{F}^2\\
		=&\|\mathcal{U}_{\mathcal{X}}^{\perp}*{\mathcal{U}_{\mathcal{X}}^{\perp}}^{T}*\overrightarrow{\mathcal{Y}}-\mathcal{U}_{\mathcal{X}}*(\Gamma+(\mathcal{M}*\mathcal{U}_{\mathcal{X}})^{T})*\mathcal{M}*\mathcal{U}_{\mathcal{X}}^{\perp}*{\mathcal{U}_{\mathcal{X}}^{\perp}}^{T}*\overrightarrow{\mathcal{Y}}\|_{F}^2\\
		=&\|\mathcal{U}_{\mathcal{X}}^{\perp}*{\mathcal{U}_{\mathcal{X}}^{\perp}}^{T}*\overrightarrow{\mathcal{Y}}\|_{F}^2+\|\mathcal{U}_{\mathcal{X}}*(\Gamma+(\mathcal{M}*\mathcal{U}_{\mathcal{X}})^{T})*\mathcal{M}*\mathcal{U}_{\mathcal{X}}^{\perp}*{\mathcal{U}_{\mathcal{X}}^{\perp}}^{T}*\overrightarrow{\mathcal{Y}}\|_{F}^2\\
		\leq&\|\mathcal{U}_{\mathcal{X}}^{\perp}*{\mathcal{U}_{\mathcal{X}}^{\perp}}^{T}*\overrightarrow{\mathcal{Y}}\|_{F}^2+2\|\mathcal{U}_{\mathcal{X}}*(\mathcal{M}*\mathcal{U}_{\mathcal{X}})^{T}*\mathcal{M}*\mathcal{U}_{\mathcal{X}}^{\perp}*{\mathcal{U}_{\mathcal{X}}^{\perp}}^{T}*\overrightarrow{\mathcal{Y}}\|_{F}^2\\
		&+2\|\mathcal{U}_{\mathcal{X}}*\Gamma*\mathcal{M}*\mathcal{U}_{\mathcal{X}}^{\perp}*{\mathcal{U}_{\mathcal{X}}^{\perp}}^{T}*\overrightarrow{\mathcal{Y}}\|_{F}^2\\
		\leq&\|\mathcal{U}_{\mathcal{X}}^{\perp}*{\mathcal{U}_{\mathcal{X}}^{\perp}}^{T}*\overrightarrow{\mathcal{Y}}\|_{F}^2+2\|\mathcal{U}_{\mathcal{X}}^{T}*\mathcal{M}^{T}*\mathcal{M}*\mathcal{U}_{\mathcal{X}}^{\perp}*{\mathcal{U}_{\mathcal{X}}^{\perp}}^{T}*\overrightarrow{\mathcal{Y}}\|_{F}^2\\
		&+2\|\Gamma\|_{2}^2\|\mathcal{M}*\mathcal{U}_{\mathcal{X}}^{\perp}*{\mathcal{U}_{\mathcal{X}}^{\perp}}^{T}*\overrightarrow{\mathcal{Y}}\|_{F}^2,
	\end{align*}	
	which further combined with (\ref{fullrank_4}) of Lemma  \ref{fullrank}, Lemmas \ref{multicom} and \ref{multicomp} gives
	\begin{align*}
		f(\widetilde{\overrightarrow{\mathcal{B}}}_{\mathcal{W}})\leq&(1+\epsilon'/2+10\epsilon')\|\mathcal{U}_{\mathcal{X}}^{\perp}*{\mathcal{U}_{\mathcal{X}}^{\perp}}^{T}*\overrightarrow{\mathcal{Y}}\|_{F}^2
		\leq(1+11\epsilon')f(\widetilde{\overrightarrow{\mathcal{B}}}_{ols}).
	\end{align*}	
	By setting $\epsilon'=\epsilon/11$, we can conclude that (\ref{rele_func}) holds as long as $\{\pi_{i}\}_{i=1}^{n}$ and $\tau$ are as assumed by the theorem.
	
	By (\ref{fullrank_1}) and (\ref{fullrank_3}) of Lemma \ref{fullrank} and the fact that $\mathcal{U}_{\mathcal{X}}*\mathcal{U}_{\mathcal{X}}^{T}+\mathcal{U}_{\mathcal{X}}^{\perp}*{\mathcal{U}_{\mathcal{X}}^{\perp}}^{T}=\mathcal{I}_{n}$, we have that
	\begin{align*}
		\widetilde{\overrightarrow{\mathcal{B}}}_{ols}-\widetilde{\overrightarrow{\mathcal{B}}}_{\mathcal{W}}=&{\mathcal{X}}^{\dag}*\overrightarrow{\mathcal{Y}}-(\mathcal{M}*\mathcal{X})^{\dag}*\mathcal{M}*\overrightarrow{\mathcal{Y}}\\
		=&(\mathcal{U}_{\mathcal{X}}*\Sigma_{\mathcal{X}}*\mathcal{V}_{\mathcal{X}}^{T})^{\dag}*\overrightarrow{\mathcal{Y}}-\mathcal{V}_{\mathcal{X}}*\Sigma_{\mathcal{X}}^{-1}*(\mathcal{M}*\mathcal{U}_{\mathcal{X}})^{\dag}*\mathcal{M}*\overrightarrow{\mathcal{Y}}\\
		=&\mathcal{V}_{\mathcal{X}}*\Sigma_{\mathcal{X}}^{-1}*\mathcal{U}_{\mathcal{X}}^{T}*\overrightarrow{\mathcal{Y}}-\mathcal{V}_{\mathcal{X}}*\Sigma_{\mathcal{X}}^{-1}*(\mathcal{M}*\mathcal{U}_{\mathcal{X}})^{\dag}*\mathcal{M}*\overrightarrow{\mathcal{Y}}\\
		=&\mathcal{V}_{\mathcal{X}}*\Sigma_{\mathcal{X}}^{-1}*\mathcal{U}_{\mathcal{X}}^{T}*\overrightarrow{\mathcal{Y}}-\mathcal{V}_{\mathcal{X}}*\Sigma_{\mathcal{X}}^{-1}*(\mathcal{M}*\mathcal{U}_{\mathcal{X}})^{\dag}*\mathcal{M}*\mathcal{U}_{\mathcal{X}}*{\mathcal{U}_{\mathcal{X}}}^{T}*\overrightarrow{\mathcal{Y}}\\
		&-\mathcal{V}_{\mathcal{X}}*\Sigma_{\mathcal{X}}^{-1}*(\mathcal{M}*\mathcal{U}_{\mathcal{X}})^{\dag}*\mathcal{M}*\mathcal{U}_{\mathcal{X}}^{\perp}*{\mathcal{U}_{\mathcal{X}}^{\perp}}^{T}*\overrightarrow{\mathcal{Y}}\\
		=&-\mathcal{V}_{\mathcal{X}}*\Sigma_{\mathcal{X}}^{-1}*(\mathcal{M}*\mathcal{U}_{\mathcal{X}})^{\dag}*\mathcal{M}*\mathcal{U}_{\mathcal{X}}^{\perp}*{\mathcal{U}_{\mathcal{X}}^{\perp}}^{T}*\overrightarrow{\mathcal{Y}},
	\end{align*}	
	which together with submultiplicative inequality, and the fact that $\Gamma=(\mathcal{M}*\mathcal{U}_{\mathcal{X}})^{\dag}-(\mathcal{M}*\mathcal{U}_{\mathcal{X}})^{T}$ leads to
	\begin{align*}
		\|\widetilde{\overrightarrow{\mathcal{B}}}_{ols}-\widetilde{\overrightarrow{\mathcal{B}}}_{\mathcal{W}}\|_{F}^2
		=&\|\mathcal{V}_{\mathcal{X}}*\Sigma_{\mathcal{X}}^{-1}*(\mathcal{M}*\mathcal{U}_{\mathcal{X}})^{\dag}*\mathcal{M}*\mathcal{U}_{\mathcal{X}}^{\perp}*{\mathcal{U}_{\mathcal{X}}^{\perp}}^{T}*\overrightarrow{\mathcal{Y}}\|_{F}^2\\
		=&\|\Sigma_{\mathcal{X}}^{-1}*((\mathcal{M}*\mathcal{U}_{\mathcal{X}})^{T}+\Gamma)*\mathcal{M}*\mathcal{U}_{\mathcal{X}}^{\perp}*{\mathcal{U}_{\mathcal{X}}^{\perp}}^{T}*\overrightarrow{\mathcal{Y}}\|_{F}^2\\
		\leq&2\|\Sigma_{\mathcal{X}}^{-1}\|_{2}^2\|(\mathcal{M}*\mathcal{U}_{\mathcal{X}})^{T}*\mathcal{M}*\mathcal{U}_{\mathcal{X}}^{\perp}*{\mathcal{U}_{\mathcal{X}}^{\perp}}^{T}*\overrightarrow{\mathcal{Y}}\|_{F}^2+2\|\Sigma_{\mathcal{X}}^{-1}\|_{2}^2\|\Gamma*\mathcal{M}*\mathcal{U}_{\mathcal{X}}^{\perp}*{\mathcal{U}_{\mathcal{X}}^{\perp}}^{T}*\overrightarrow{\mathcal{Y}}\|_{F}^2\\
		\leq&\frac{1}{\sigma_{\min}^2(\text{bcirc}(\mathcal{X}))}2\|\mathcal{U}_{\mathcal{X}}^{T}*\mathcal{M}^{T}*\mathcal{M}*\mathcal{U}_{\mathcal{X}}^{\perp}*{\mathcal{U}_{\mathcal{X}}^{\perp}}^{T}*\overrightarrow{\mathcal{Y}}\|_{F}^2\\
		&+\frac{1}{\sigma_{\min}^2(\text{bcirc}(\mathcal{X}))}2\|\Gamma\|_{2}^2\|\mathcal{M}*\mathcal{U}_{\mathcal{X}}^{\perp}*{\mathcal{U}_{\mathcal{X}}^{\perp}}^{T}*\overrightarrow{\mathcal{Y}}\|_{F}^2,
	\end{align*}	
	which further combined with (\ref{fullrank_4}) of Lemma  \ref{fullrank}, Lemmas \ref{multicom} and \ref{multicomp} gives
	\begin{align*}
		\|\widetilde{\overrightarrow{\mathcal{B}}}_{ols}-\widetilde{\overrightarrow{\mathcal{B}}}_{\mathcal{W}}\|_{F}^2
		\leq&\frac{1}{\sigma_{\min}^2(\text{bcirc}(\mathcal{X}))}(\epsilon'/2+10\epsilon')\|\mathcal{U}_{\mathcal{X}}^{\perp}*{\mathcal{U}_{\mathcal{X}}^{\perp}}^{T}*\overrightarrow{\mathcal{Y}}\|_{F}^2
		\leq\frac{11\epsilon'}{\sigma_{\min}^2(\text{bcirc}(\mathcal{X}))}f(\widetilde{\overrightarrow{\mathcal{B}}}_{ols}).
	\end{align*}	
	 By setting $\epsilon'=\epsilon/11$, we can conclude that (\ref{rele_beta_func}) holds as long as $\{\pi_{i}\}_{i=1}^{n}$ and $\tau$ are as assumed by the theorem.
	
	If we make the additional assumption that $\|\mathcal{U}_{\mathcal{X}}*\mathcal{U}_{\mathcal{X}}^{T}*\overrightarrow{\mathcal{Y}}\|_{F}^2\geq\gamma\|\overrightarrow{\mathcal{Y}}\|_{F}^2$, for some fixed $\gamma\in(0,1]$, then it follows that
	\begin{align*}
		\|\widetilde{\overrightarrow{\mathcal{B}}}_{ols}-\widetilde{\overrightarrow{\mathcal{B}}}_{\mathcal{W}}\|_{F}^2
		=&\frac{\epsilon}{\sigma_{\min}^2(\text{bcirc}(\mathcal{X}))}f(\widetilde{\overrightarrow{\mathcal{B}}}_{ols})
		=\frac{\epsilon}{\sigma_{\min}^2(\text{bcirc}(\mathcal{X}))}\|\mathcal{U}_{\mathcal{X}}^{\perp}*{\mathcal{U}_{\mathcal{X}}^{\perp}}^{T}*\overrightarrow{\mathcal{Y}}\|_{F}^{2}\\
		=&\frac{\epsilon}{\sigma_{\min}^2(\text{bcirc}(\mathcal{X}))}\left(\|\overrightarrow{\mathcal{Y}}\|_{F}^{2}-\|\mathcal{U}_{\mathcal{X}}*{\mathcal{U}_{\mathcal{X}}}^{T}*\overrightarrow{\mathcal{Y}}\|_{F}^{2}\right)\\
		\leq&\frac{\epsilon}{\sigma_{\min}^2(\text{bcirc}(\mathcal{X}))}(\gamma^{-1}-1)\|\mathcal{U}_{\mathcal{X}}*{\mathcal{U}_{\mathcal{X}}}^{T}*\overrightarrow{\mathcal{Y}}\|_{F}^{2}\\
		\leq&\epsilon\frac{\sigma_{\max}^2(\text{bcirc}(\mathcal{X}))}{\sigma_{\min}^2(\text{bcirc}(\mathcal{X}))}(\gamma^{-1}-1)\|\widetilde{\overrightarrow{\mathcal{B}}}_{ols}\|_{F}^{2}
		=\epsilon\kappa^2(\text{bcirc}(\mathcal{X}))(\gamma^{-1}-1)\|\widetilde{\overrightarrow{\mathcal{B}}}_{ols}\|_{F}^{2},
	\end{align*}	
	where the last inequality is due to the fact that
	\begin{align*}
		\|\widetilde{\overrightarrow{\mathcal{B}}}_{ols}\|_{F}^2=&\|{\mathcal{X}}^{\dag}*\overrightarrow{\mathcal{Y}}\|_{F}^2
		=\|(\mathcal{U}_{\mathcal{X}}*\Sigma_{\mathcal{X}}*\mathcal{V}_{\mathcal{X}}^{T})^{\dag}*\overrightarrow{\mathcal{Y}}\|_{F}^2
		=\|\mathcal{V}_{\mathcal{X}}*\Sigma_{\mathcal{X}}^{-1}*\mathcal{U}_{\mathcal{X}}^{T}*\overrightarrow{\mathcal{Y}}\|_{F}^2\\
		=&\|\Sigma_{\mathcal{X}}^{-1}*\mathcal{U}_{\mathcal{X}}^{T}*\overrightarrow{\mathcal{Y}}\|_{F}^2
		\geq\frac{1}{\sigma_{\max}^2(\text{bcirc}(\mathcal{X}))}\|\mathcal{U}_{\mathcal{X}}^{T}*\overrightarrow{\mathcal{Y}}\|_{F}^2
		=\frac{1}{\sigma_{\max}^2(\text{bcirc}(\mathcal{X}))}\|\mathcal{U}_{\mathcal{X}}* \mathcal{U}_{\mathcal{X}}^{T}*\overrightarrow{\mathcal{Y}}\|_{F}^2.
	\end{align*}		
In summary, we have finished the proof of this theorem.
\end{proof}

\begin{proof}[Proof of Lemma~{\upshape\ref{taylor_expan}}] \rm
	By performing a Taylor expansion of  $\widetilde{\overrightarrow{\mathcal{B}}}_{\mathcal{W}}(\overrightarrow{\mathcal{W}})$ around the point $\overrightarrow{\mathcal{W}}_{0}=\overrightarrow{\mathbf{1}}$, we have that
	\begin{align*}
		\widetilde{\overrightarrow{\mathcal{B}}}_{\mathcal{W}}(\overrightarrow{\mathcal{W}})
		=&\widetilde{\overrightarrow{\mathcal{B}}}_{\mathcal{W}}(\overrightarrow{\mathbf{1}})+\frac{\partial\widetilde{\overrightarrow{\mathcal{B}}}_{\mathcal{W}}(\overrightarrow{\mathcal{W}})}{\partial\overrightarrow{\mathcal{W}}^{ST}}\mid_{\overrightarrow{\mathcal{W}}=\overrightarrow{\mathbf{1}}}*(\overrightarrow{\mathcal{W}}-\overrightarrow{\mathbf{1}})+\mathcal{R}_{\mathcal{W}}\\
		=&\widetilde{\overrightarrow{\mathcal{B}}}_{ols}+\frac{\partial(\mathcal{X}^{T}*\text{diag}_{t}(\overrightarrow{\mathcal{W}})*\mathcal{X})^{-1}*\mathcal{X}^{T}*\text{diag}_{t}(\overrightarrow{\mathcal{W}})*\overrightarrow{\mathcal{Y}}}{\partial\overrightarrow{\mathcal{W}}^{ST}}\mid_{\overrightarrow{\mathcal{W}}=\overrightarrow{\mathbf{1}}}*(\overrightarrow{\mathcal{W}}-\overrightarrow{\mathbf{1}})+\mathcal{R}_{\mathcal{W}},
	\end{align*}	
	where the remainder $\mathcal{R}_{\mathcal{W}}$ satisfying $\|\mathcal{R}_{\mathcal{W}}\|_{F}=o_{p}(\|\overrightarrow{\mathcal{W}}-\overrightarrow{\mathbf{1}}\|_{F})$ when $\overrightarrow{\mathcal{W}}$ is close to $\overrightarrow{\mathbf{1}}$.
	By di?erentiation by parts, we obtain that
	\begin{align}
		&\frac{\partial(\mathcal{X}^{T}*\text{diag}_{t}(\overrightarrow{\mathcal{W}})*\mathcal{X})^{-1}*\mathcal{X}^{T}*\text{diag}_{t}(\overrightarrow{\mathcal{W}})*\overrightarrow{\mathcal{Y}}}{\partial\overrightarrow{\mathcal{W}}^{ST}}\nonumber\\
		=&\frac{\partial\text{vec}_{t}((\mathcal{X}^{T}*\text{diag}_{t}(\overrightarrow{\mathcal{W}})*\mathcal{X})^{-1}*\mathcal{X}^{T}*\text{diag}_{t}(\overrightarrow{\mathcal{W}})*\overrightarrow{\mathcal{Y}})}{\partial\overrightarrow{\mathcal{W}}^{ST}}\nonumber \\
		=&(\mathcal{I}_{1}\otimes_{t}(\mathcal{X}^{T}*\text{diag}_{t}(\overrightarrow{\mathcal{W}})*\mathcal{X})^{-1})*\frac{\partial\text{vec}_{t}(\mathcal{X}^{T}*\text{diag}_{t}(\overrightarrow{\mathcal{W}})*\overrightarrow{\mathcal{Y}})}{\partial\overrightarrow{\mathcal{W}}^{ST}}  \label{ppppp1}\\
		&+((\overrightarrow{\mathcal{Y}}^{ST}*\text{diag}_{t}(\overrightarrow{\mathcal{W}})*\mathcal{X}^{R})\otimes_{t}\mathcal{I}_{p})*\frac{\partial\text{vec}_{t}((\mathcal{X}^{T}*\text{diag}_{t}(\overrightarrow{\mathcal{W}})*\mathcal{X})^{-1})}{\partial\overrightarrow{\mathcal{W}}^{ST}}. \label{ppppp2}
	\end{align}	
	First, (\ref{ppppp1}) can be simplified to
	\begin{align*}
		&	(\mathcal{I}_{1}\otimes_{t}(\mathcal{X}^{T}*\text{diag}_{t}(\overrightarrow{\mathcal{W}})*\mathcal{X})^{-1})*(\overrightarrow{\mathcal{Y}}^{ST}\otimes_{t}\mathcal{X}^{T} )*\frac{\partial\text{vec}_{t}(\text{diag}_{t}(\overrightarrow{\mathcal{W}}))}{\partial\overrightarrow{\mathcal{W}}^{ST}} \nonumber\\ =&	(\overrightarrow{\mathcal{Y}}^{ST}\otimes_{t}((\mathcal{X}^{T}*\text{diag}_{t}(\overrightarrow{\mathcal{W}})*\mathcal{X})^{-1}*\mathcal{X}^{T}))*\frac{\partial\text{vec}_{t}(\text{diag}_{t}(\overrightarrow{\mathcal{W}}))}{\partial\overrightarrow{\mathcal{W}}^{ST}} .
	\end{align*}	
	Second, since
	\begin{align*}
		&	\frac{\partial\text{vec}_{t}((\mathcal{X}^{T}*\text{diag}_{t}(\overrightarrow{\mathcal{W}})*\mathcal{X})^{-1})}{\partial\overrightarrow{\mathcal{W}}^{ST}}\\
		=&\frac{\partial\text{vec}_{t}((\mathcal{X}^{T}*\text{diag}_{t}(\overrightarrow{\mathcal{W}})*\mathcal{X})^{-1})}{\partial\text{vec}_{t}(\mathcal{X}^{T}*\text{diag}_{t}(\overrightarrow{\mathcal{W}})*\mathcal{X})}*\frac{\partial\text{vec}_{t}(\mathcal{X}^{T}*\text{diag}_{t}(\overrightarrow{\mathcal{W}})*\mathcal{X})}{\partial\overrightarrow{\mathcal{W}}^{ST}}\\
		=&	(-((\mathcal{X}^{T}*\text{diag}_{t}(\overrightarrow{\mathcal{W}})*\mathcal{X})^{-1})^{ST}\otimes_{t}(\mathcal{X}^{T}*\text{diag}_{t}(\overrightarrow{\mathcal{W}})*\mathcal{X})^{-1})*	\frac{\partial\text{vec}_{t}(\mathcal{X}^{T}*\text{diag}_{t}(\overrightarrow{\mathcal{W}})*\mathcal{X})}{\partial\overrightarrow{\mathcal{W}}^{ST}}\\
		=&(-((\mathcal{X}^{T}*\text{diag}_{t}(\overrightarrow{\mathcal{W}})*\mathcal{X})^{-1})^{ST}\otimes_{t}(\mathcal{X}^{T}*\text{diag}_{t}(\overrightarrow{\mathcal{W}})*\mathcal{X})^{-1})*(\mathcal{X}^{ST}\otimes_{t}\mathcal{X}^{T})*\frac{\partial\text{vec}_{t}(\text{diag}_{t}(\overrightarrow{\mathcal{W}}))}{\partial\overrightarrow{\mathcal{W}}^{ST}}\\
		=&-(((\mathcal{X}^{T}*\text{diag}_{t}(\overrightarrow{\mathcal{W}})*\mathcal{X})^{-1})^{ST}*\mathcal{X}^{ST})\otimes_{t}((\mathcal{X}^{T}*\text{diag}_{t}(\overrightarrow{\mathcal{W}})*\mathcal{X})^{-1}*\mathcal{X}^{T})*	\frac{\partial\text{vec}_{t}(\text{diag}_{t}(\overrightarrow{\mathcal{W}}))}{\partial\overrightarrow{\mathcal{W}}^{ST}},
	\end{align*}
	we have that
	\begin{align*}
		&\frac{\partial(\mathcal{X}^{T}*\text{diag}_{t}(\overrightarrow{\mathcal{W}})*\mathcal{X})^{-1}*\mathcal{X}^{T}*\text{diag}_{t}(\overrightarrow{\mathcal{W}})*\overrightarrow{\mathcal{Y}}}{\partial\overrightarrow{\mathcal{W}}^{ST}}\\
		=&(\overrightarrow{\mathcal{Y}}^{ST}-(\overrightarrow{\mathcal{Y}}^{ST}*\text{diag}_{t}(\overrightarrow{\mathcal{W}})*\mathcal{X}^{R})*((\mathcal{X}^{T}*\text{diag}_{t}(\overrightarrow{\mathcal{W}})*\mathcal{X})^{-1})^{ST}*\mathcal{X}^{ST})\otimes_{t}((\mathcal{X}^{T}*\text{diag}_{t}(\overrightarrow{\mathcal{W}})*\mathcal{X})^{-1}\\
		&*\mathcal{X}^{T})*\frac{\partial\text{vec}_{t}(\text{diag}_{t}(\overrightarrow{\mathcal{W}}))}{\partial\overrightarrow{\mathcal{W}}^{ST}} \\
		=&((\overrightarrow{\mathcal{Y}}^{ST}-(\mathcal{X}*(\mathcal{X}^{T}*\text{diag}_{t}(\overrightarrow{\mathcal{W}})*\mathcal{X})^{-1}*\mathcal{X}^{T}*\text{diag}_{t}(\overrightarrow{\mathcal{W}})*\overrightarrow{\mathcal{Y}})^{ST})\otimes_{t}((\mathcal{X}^{T}*\text{diag}_{t}(\overrightarrow{\mathcal{W}})*\mathcal{X})^{-1}*\mathcal{X}^{T}))\\
		&*\frac{\partial\text{vec}_{t}(\text{diag}_{t}(\overrightarrow{\mathcal{W}}))}{\partial\overrightarrow{\mathcal{W}}^{ST}} \\
		=&((\overrightarrow{\mathcal{Y}}^{ST}-(\mathcal{X}*\widetilde{\overrightarrow{\mathcal{B}}}_{ols})^{ST})\otimes_{t}((\mathcal{X}^{T}*\text{diag}_{t}(\overrightarrow{\mathcal{W}})*\mathcal{X})^{-1}*\mathcal{X}^{T}))*\frac{\partial\text{vec}_{t}(\text{diag}_{t}(\overrightarrow{\mathcal{W}}))}{\partial\overrightarrow{\mathcal{W}}^{ST}}.
	\end{align*}	
	So combining all above results, we can get that
	\begin{align*}
		\widetilde{\overrightarrow{\mathcal{B}}}_{\mathcal{W}}(\overrightarrow{\mathcal{W}})=&\widetilde{\overrightarrow{\mathcal{B}}}_{ols}+\frac{\partial(\mathcal{X}^{T}*\text{diag}_{t}(\overrightarrow{\mathcal{W}})*\mathcal{X})^{-1}*\mathcal{X}^{T}*\text{diag}_{t}(\overrightarrow{\mathcal{W}})*\overrightarrow{\mathcal{Y}}}{\partial\overrightarrow{\mathcal{W}}^{ST}}\mid_{\overrightarrow{\mathcal{W}}=\overrightarrow{\mathbf{1}}}*(\overrightarrow{\mathcal{W}}-\overrightarrow{\mathbf{1}})+\mathcal{R}_{\mathcal{W}}\\
		=&\widetilde{\overrightarrow{\mathcal{B}}}_{ols}+((\overrightarrow{\mathcal{Y}}^{ST}-(\mathcal{X}*\widetilde{\overrightarrow{\mathcal{B}}}_{ols})^{ST})\otimes_{t}((\mathcal{X}^{T}*\text{diag}_{t}(\overrightarrow{\mathcal{W}})*\mathcal{X})^{-1}*\mathcal{X}^{T}))\\
		&*\frac{\partial\text{vec}_{t}(\text{diag}_{t}(\overrightarrow{\mathcal{W}}))}{\partial\overrightarrow{\mathcal{W}}^{ST}} \mid_{\overrightarrow{\mathcal{W}}=\overrightarrow{\mathbf{1}}}*(\overrightarrow{\mathcal{W}}-\overrightarrow{\mathbf{1}})+\mathcal{R}_{\mathcal{W}}\\
		=&\widetilde{\overrightarrow{\mathcal{B}}}_{\text{ols}}+(\widetilde{\overrightarrow{\mathcal{E}}}^{ST}\otimes_{t}(\mathcal{X}^{T}*\mathcal{X})^{-1}*\mathcal{X}^{T})*\begin{bmatrix}
			\mathcal{I}_{n(:,1,:)} * \mathcal{I}_{n(:,1,:)}^{T}\\
			\mathcal{I}_{n(:,2,:)} *	\mathcal{I}_{n(:,2,:)}^{T}   \\
			\vdots   \\
			\mathcal{I}_{n(:,n,:)} *	\mathcal{I}_{n(:,n,:)}^{T} \\
		\end{bmatrix}*(\overrightarrow{\mathcal{W}}-\overrightarrow{\mathbf{1}})+\mathcal{R}_{\mathcal{W}}\\
		=&\widetilde{\overrightarrow{\mathcal{B}}}_{\text{ols}}+(\mathcal{X}^{T}*\mathcal{X})^{-1}*\mathcal{X}^{T}*\text{diag}_{t}(\widetilde{\overrightarrow{\mathcal{E}}})*(\overrightarrow{\mathcal{W}}-\overrightarrow{\mathbf{1}})+\mathcal{R}_{\mathcal{W}},
	\end{align*}	
	where $\widetilde{\overrightarrow{\mathcal{E}}}=\overrightarrow{\mathcal{Y}}-\mathcal{X}*\widetilde{\overrightarrow{\mathcal{B}}}_{ols}$ is the TLS residual tubal vector.
\end{proof}

\begin{proof}[Proof of Theorem~{\upshape\ref{exvar_sta}}] \rm
	Since we perform random sampling with replacement in the Subsamping TLS method, i.e., Algorithm \ref{Subsample_TLS}, it is easy to see that $\overrightarrow{\mathcal{W}}_{(1)} = (w_1 , w_2 , \cdots , w_n )^{T}$ has a scaled multinomial distribution, thus
 $\mathbf{E}[\overrightarrow{\mathcal{W}}_{(1)}]=\mathbf{1}_{n}$ and
 $$\textbf{Var}[\overrightarrow{\mathcal{W}}_{(1)}]=\mathbf{E}[(\overrightarrow{\mathcal{W}}_{(1)}-\mathbf{E}[\overrightarrow{\mathcal{W}}_{(1)}])(\overrightarrow{\mathcal{W}}_{(1)}-\mathbf{E}[\overrightarrow{\mathcal{W}}_{(1)}])^{T}]=\text{diag}\left(\frac{1}{\tau\pi_{i}}\right)-\frac{1}{\tau}J_{n},$$
where $\text{diag}\left(\frac{1}{\tau\pi_{i}}\right)$ is a diagonal matrix with diagonal elements $\left\{\frac{1}{\tau\pi_{i}}\right\}_{i=1}^{n}$  and $J_{n}$ is a $n\times n$ matrix of all ones. Therefore, $\mathbf{E}[\overrightarrow{\mathcal{W}}]=\overrightarrow{\mathbf{1}}$ and
\begin{align*}
	\textbf{Var}[\overrightarrow{\mathcal{W}}]=\mathbf{E}[(\overrightarrow{\mathcal{W}}-\mathbf{E}[\overrightarrow{\mathcal{W}}])*(\overrightarrow{\mathcal{W}}-\mathbf{E}[\overrightarrow{\mathcal{W}}])^{T}]=\text{diag}_{\text{t}}\left(\frac{1}{\tau\pi_{i}}\mathbf{1}\right)-\frac{1}{\tau}\mathcal{J}_{n},
\end{align*}	
where $\mathcal{J}_{n}$ is a tubal matrix whose first frontal slice is a $n\times n$ matrix of all ones and other frontal slices are all zeros.
Now, it is straightforward to calculate conditional expectation and variance of $\widetilde{\overrightarrow{\mathcal{B}}}_{\mathcal{W}}$, that is
	\begin{align*}
		\mathbf{E}_{\overrightarrow{\mathcal{W}}}[\widetilde{\overrightarrow{\mathcal{B}}}_{\mathcal{W}}\mid\overrightarrow{\mathcal{Y}}]&=\mathbf{E}_{\mathcal{W}}[\widetilde{\overrightarrow{\mathcal{B}}}_{\text{ols}}+(\mathcal{X}^{T}*\mathcal{X})^{-1}*\mathcal{X}^{T}*\text{diag}_{t}(\widetilde{\overrightarrow{\mathcal{E}}})*(\overrightarrow{\mathcal{W}}-\overrightarrow{\mathbf{1}})+\mathcal{R}_{\overrightarrow{\mathcal{W}}}\mid\overrightarrow{\mathcal{Y}}]\\
		&=\widetilde{\overrightarrow{\mathcal{B}}}_{\text{ols}}+(\mathcal{X}^{T}*\mathcal{X})^{-1}*\mathcal{X}^{T}*\text{diag}_{t}(\widetilde{\overrightarrow{\mathcal{E}}})*(\mathbf{E}[\overrightarrow{\mathcal{W}}]-\overrightarrow{\mathbf{1}})+\mathbf{E}_{\overrightarrow{\mathcal{W}}}[\mathcal{R}_{\mathcal{W}}]\\
		&=\widetilde{\overrightarrow{\mathcal{B}}}_{\text{ols}}+\mathbf{E}_{\overrightarrow{\mathcal{W}}}[\mathcal{R}_{\mathcal{W}}],
	\end{align*}	
and
	\begin{align*}
		&\mathbf{Var}_{\overrightarrow{\mathcal{W}}}[\widetilde{\overrightarrow{\mathcal{B}}}_{\mathcal{W}}\mid\overrightarrow{\mathcal{Y}}]=\mathbf{Var}_{\overrightarrow{\mathcal{W}}}[\widetilde{\overrightarrow{\mathcal{B}}}_{\text{ols}}+(\mathcal{X}^{T}*\mathcal{X})^{-1}*\mathcal{X}^{T}*\text{diag}_{t}(\widetilde{\overrightarrow{\mathcal{E}}})*(\overrightarrow{\mathcal{W}}-\overrightarrow{\mathbf{1}})+\mathcal{R}_{\mathcal{W}}\mid\overrightarrow{\mathcal{Y}}]\\
		=&\mathbf{Var}_{\overrightarrow{\mathcal{W}}}[(\mathcal{X}^{T}*\mathcal{X})^{-1}*\mathcal{X}^{T}*\text{diag}_{t}(\widetilde{\overrightarrow{\mathcal{E}}})*(\overrightarrow{\mathcal{W}}-\overrightarrow{\mathbf{1}})\mid\overrightarrow{\mathcal{Y}}]+\mathbf{Var}_{\overrightarrow{\mathcal{W}}}[\mathcal{R}_{\mathcal{W}}]\\
		=&(\mathcal{X}^{T}*\mathcal{X})^{-1}*\mathcal{X}^{T}*\text{diag}_{t}(\widetilde{\overrightarrow{\mathcal{E}}})*\mathbf{Var}[\overrightarrow{\mathcal{W}}]*\text{diag}_{t}(\widetilde{\overrightarrow{\mathcal{E}}})*\mathcal{X}*(\mathcal{X}^{T}*\mathcal{X})^{-1}+\mathbf{Var}_{\overrightarrow{\mathcal{W}}}[\mathcal{R}_{\mathcal{W}}]\\
		=&(\mathcal{X}^{T}*\mathcal{X})^{-1}*\mathcal{X}^{T}*\text{diag}_{t}(\widetilde{\overrightarrow{\mathcal{E}}})*\left(\text{diag}_{\text{t}}\left(\frac{1}{\tau\pi_{i}}\mathbf{1}\right)-\frac{1}{\tau}\mathcal{J}_{n}\right)*\text{diag}_{t}(\widetilde{\overrightarrow{\mathcal{E}}})*\mathcal{X}*(\mathcal{X}^{T}*\mathcal{X})^{-1}\\
		&+\mathbf{Var}_{\overrightarrow{\mathcal{W}}}[\mathcal{R}_{\mathcal{W}}]\\
		=&(\mathcal{X}^{T}*\mathcal{X})^{-1}*\mathcal{X}^{T}*\text{diag}_{t}(\widetilde{\overrightarrow{\mathcal{E}}})*\text{diag}_{t}\left(\frac{1}{\tau\pi_{i}}\mathbf{1}\right)*\text{diag}_{t}(\widetilde{\overrightarrow{\mathcal{E}}})*\mathcal{X}*(\mathcal{X}^{T}*\mathcal{X})^{-1}+\mathbf{Var}_{\overrightarrow{\mathcal{W}}}[\mathcal{R}_{\mathcal{W}}].
	\end{align*}	
We next establish the unconditional expectation and variance of $\widetilde{\overrightarrow{\mathcal{B}}}_{\mathcal{W}}$, that is
	\begin{align*}
		\mathbf{E}[\widetilde{\overrightarrow{\mathcal{B}}}_{\mathcal{W}}]&=\mathbf{E}_{\overrightarrow{\mathcal{Y}}}[\mathbf{E}_{\overrightarrow{\mathcal{W}}}[\widetilde{\overrightarrow{\mathcal{B}}}_{\mathcal{W}}\mid\overrightarrow{\mathcal{Y}}]]=\mathbf{E}_{\overrightarrow{\mathcal{Y}}}[\widetilde{\overrightarrow{\mathcal{B}}}_{\text{ols}}+\mathbf{E}_{\overrightarrow{\mathcal{W}}}[\mathcal{R}_{\mathcal{W}}]]
		=\overrightarrow{\mathcal{B}}_{0}+\mathbf{E}[\mathcal{R}_{\mathcal{W}}],
	\end{align*}	
and
	\begin{align*}
		&\mathbf{Var}[\widetilde{\overrightarrow{\mathcal{B}}}_{\mathcal{W}}]=\mathbf{Var}_{\overrightarrow{\mathcal{Y}}}[\mathbf{E}_{\overrightarrow{\mathcal{W}}}[\widetilde{\overrightarrow{\mathcal{B}}}_{\mathcal{W}}\mid\overrightarrow{\mathcal{Y}}]]+\mathbf{E}_{\overrightarrow{\mathcal{Y}}}[\mathbf{Var}_{\overrightarrow{\mathcal{W}}}[\widetilde{\overrightarrow{\mathcal{B}}}_{\mathcal{W}}\mid\overrightarrow{\mathcal{Y}}]]\\
		=&\mathbf{Var}_{\overrightarrow{\mathcal{Y}}}[\widetilde{\overrightarrow{\mathcal{B}}}_{\text{ols}}+\mathbf{E}_{\overrightarrow{\mathcal{W}}}[\mathcal{R}_{\mathcal{W}}]]+\mathbf{E}_{\overrightarrow{\mathcal{Y}}}[(\mathcal{X}^{T}*\mathcal{X})^{-1}*\mathcal{X}^{T}*\text{diag}_{t}(\widetilde{\overrightarrow{\mathcal{E}}})*\text{diag}_{t}\left(\frac{1}{\tau\pi_{i}}\mathbf{1}\right)*\text{diag}_{t}(\widetilde{\overrightarrow{\mathcal{E}}})\\
		&*\mathcal{X}*(\mathcal{X}^{T}*\mathcal{X})^{-1}+\mathbf{Var}_{\overrightarrow{\mathcal{W}}}[\mathcal{R}_{\mathcal{W}}]]\\
		=&\sigma^{2}(\mathcal{X}^{T}*\mathcal{X})^{-1}+(\mathcal{X}^{T}*\mathcal{X})^{-1}*\mathcal{X}^{T}*\mathbf{E}_{\overrightarrow{\mathcal{Y}}}[\text{diag}_{t}(\widetilde{\overrightarrow{\mathcal{E}}})*\text{diag}_{t}\left(\frac{1}{\tau\pi_{i}}\mathbf{1}\right)*\text{diag}_{t}(\widetilde{\overrightarrow{\mathcal{E}}})]*\mathcal{X}\\
		&*(\mathcal{X}^{T}*\mathcal{X})^{-1}+\mathbf{Var}[\mathcal{R}_{\mathcal{W}}]\\
		=&\sigma^{2}(\mathcal{X}^{T}*\mathcal{X})^{-1}+\frac{\sigma^{2}}{\tau}(\mathcal{X}^{T}*\mathcal{X})^{-1}*\mathcal{X}^{T}*\text{diag}_{\text{t}}\left(\frac{1}{\pi_{i}}(\mathbf{1}-	\mathcal{X}_{(i,:,:)}*(\mathcal{X}^{T}*\mathcal{X})^{-1}*\mathcal{X}_{(i,:,:)}^{T})\right)\\
		&*\mathcal{X}*(\mathcal{X}^{T}*\mathcal{X})^{-1}+\mathbf{Var}[\mathcal{R}_{\mathcal{W}}],
	\end{align*}	
where the last equality is from
	\begin{align*}
		\mathbf{E}_{\overrightarrow{\mathcal{Y}}}[\text{diag}_{t}(\widetilde{\overrightarrow{\mathcal{E}}})*\text{diag}_{t}\left(\frac{1}{\tau\pi_{i}}\mathbf{1}\right)*\text{diag}_{t}(\widetilde{\overrightarrow{\mathcal{E}}})]
		=&\text{diag}_{t}\left(\frac{1}{\tau\pi_{i}}\mathbf{1}\right)*\mathbf{E}_{\overrightarrow{\mathcal{Y}}}[\text{diag}_{t}(\widetilde{\overrightarrow{\mathcal{E}}})*\text{diag}_{t}(\widetilde{\overrightarrow{\mathcal{E}}})]\\
		=&\frac{\sigma^{2}}{\tau}\text{diag}_{\text{t}}\left(\frac{1}{\pi_{i}}(\mathbf{1}-	\mathcal{X}_{(i,:,:)}*(\mathcal{X}^{T}*\mathcal{X})^{-1}*\mathcal{X}_{(i,:,:)}^{T})\right).
	\end{align*}	
\end{proof}

\begin{proof}[Proof of Theorem~{\upshape\ref{optimalprob}}] \rm
	Considering that $(\mathcal{X}^{T}*\mathcal{X})^{-1}$ is a T-symmetric T-positive semide?nite tubal matrix and does not depend on $\{\pi_{i}\}_{i=1}^{n}$ , we can convert minimizing $\text{trace}(\mathbf{Var}[\widetilde{\overrightarrow{\mathcal{B}}}_{\mathcal{W}}])$ into minimizing $\text{trace}(\mathcal{N})$,
	where
	$$\mathcal{N}:=\mathcal{X}^{T}*\text{diag}_{\text{t}}\left(\frac{1}{\pi_{i}}(\mathbf{1}-	\mathcal{X}_{(i,:,:)}*(\mathcal{X}^{T}*\mathcal{X})^{-1}*\mathcal{X}_{(i,:,:)}^{T})\right)*\mathcal{X}.$$
	By applying H$\ddot{\text{o}}$lder¡¯s inequality, we can get that
	\begin{align*}
		\text{trace}(\mathcal{N})=&\frac{1}{l}\sum_{k=1}^{l}\text{trace}(\widehat{\mathcal{N}}_{(k)})=\frac{1}{l}\sum_{k=1}^{l}\text{trace}\left(\widehat{\mathcal{ X}}_{(k)}^{H}\text{diag}\left(\frac{1}{\pi_{i}}\left(1-\widehat{\mathcal{ X}}_{(i,:,k)}(\widehat{\mathcal{ X}}_{(k)}^{H}\widehat{\mathcal{ X}}_{(k)})^{-1}\widehat{\mathcal{ X}}_{(i,:,k)}^{H}\right)\right)\widehat{\mathcal{ X}}_{(k)}\right)\\
		=&\frac{1}{l}\sum_{k=1}^{l}\sum_{i=1}^{n}\left(\frac{1}{\pi_{i}}\left(1-\widehat{\mathcal{ X}}_{(i,:,k)}(\widehat{\mathcal{ X}}_{(k)}^{H}\widehat{\mathcal{ X}}_{(k)})^{-1}\widehat{\mathcal{ X}}_{(i,:,k)}^{T}\right)\|\widehat{\mathcal{ X}}_{(i,:,k)}\|_{2}^2\right)\\
		=&\sum_{i=1}^{n}\frac{1}{\pi_{i}}\left(\frac{1}{l}\sum_{k=1}^{l}\left(1-\|\widehat{\mathcal{U}_{\mathcal{X}}}_{(i,:,k)}\|_{2}^{2}\right)\|\widehat{\mathcal{ X}}_{(i,:,k)}\|_{2}^2\right)\sum_{i=1}^{n}\pi_{i}\\
		\geq&\left(\sum_{i=1}^{n}\sqrt{\frac{1}{l}\sum_{k=1}^{l}\left(1-\|\widehat{\mathcal{U}_{\mathcal{X}}}_{(i,:,k)}\|_{2}^{2}\right)\|\widehat{\mathcal{ X}}_{(i,:,k)}\|_{2}^2}\right)^{2}
	\end{align*}
	with the equality holds if and only if  $\pi_{i}\propto\sqrt{\frac{1}{l}\sum_{k=1}^{l}\left(1-\|\widehat{\mathcal{U}_{\mathcal{X}}}_{(i,:,k)}\|_{2}^{2}\right)\|\widehat{\mathcal{ X}}_{(i,:,k)}\|_{2}^2}.$
\end{proof}
\section{The Fourier version of the Subsampling TLS method\label{app2}}

\begin{algorithm}[htbp]
	\caption{Subsampling TLS in Fourier domain} \label{Subsample_TLS_Four}
	\hspace*{0.02in} {\bf Input:}
	$\mathcal{X}\in \mathbb{K}^{n\times p }_{l}$, $\overrightarrow{\mathcal{Y}}\in \mathbb{K}^{n }_{l}$, sample size $\tau\geq p$  and a probability distribution $\{\pi_{i}\}_{i=1}^{n}$
	\begin{algorithmic}[1]
		\State $\widehat{\mathcal{ X}}=\texttt{fft}(\mathcal{ X},[~],3)$; $\widehat{\overrightarrow{\mathcal{Y}}}=\texttt{fft}(\overrightarrow{\mathcal{Y}},[~],3)$
		\State Initialize $\widehat{\mathcal{S}}\in\mathbb{K}_{l}^{n\times \tau}$ and $\widehat{\mathcal{D}}\in\mathbb{K}_{l}^{\tau \times \tau}$ to zero tubal matrices
		\For{$t=1,\cdots,\tau$}
		\State Pick $i_{t}\in[n]$, where $\mathbf{P}(i_{t}=i)=\pi_{i}$
		\State $\widehat{\mathcal{D}}_{(t,t,1)}=\frac{1}{\sqrt{\tau\pi_{i_{t}}}}$
		\State $\widehat{\mathcal{S}}_{(i_{t},t,1)}=1$
		\EndFor
		\For{$k=1,\cdots,\lceil\frac{l+1}{2}\rceil$}
		\State $\widehat{\widetilde{\overrightarrow{\mathcal{B}}}}_{ \mathcal{W}(k)}=(\widehat{\mathcal{D}}_{(1)}*\widehat{\mathcal{S}}_{(1)}^{T}*\widehat{\mathcal{X}}_{(k)})^{\dag}*\widehat{\mathcal{D}}_{(1)}*\widehat{\mathcal{S}}_{(1)}^{T}*\widehat{\overrightarrow{\mathcal{Y}}}_{(k)}$
		\EndFor
		\For{$k=\lceil\frac{l+1}{2}\rceil+1,\cdots,l$}
	   	\State	$\widehat{\widetilde{\overrightarrow{\mathcal{B}}}}_{ \mathcal{W}(k)}=\text{conj}(\widehat{\widetilde{\overrightarrow{\mathcal{B}}}}_{ \mathcal{W}(l-k+2)})$		
		\EndFor
		\State $\widetilde{\overrightarrow{\mathcal{B}}}_{\mathcal{W}}= \texttt{ifft}\left(\widehat{\widetilde{\overrightarrow{\mathcal{B}}}}_{ \mathcal{W}},[~],3\right)$
	\end{algorithmic}
	\hspace*{0.02in} {\bf Output:} the approximation solution $\widetilde{\overrightarrow{\mathcal{B}}}_{\mathcal{W}}$
\end{algorithm}
\end{document}